\newtheorem{prop}{Proposition}[section]
\newtheorem{coro}[prop]{Corollary}
\newtheorem{lem}[prop]{Lemma}
\newtheorem{rem}[prop]{Remark}
\newtheorem{defi}[prop]{Definition}
\newtheorem{theo}[prop]{Theorem}
\newcommand{\R}{\mathbb R}
\newcommand{\Q}{\mathbb Q}
\newcommand{\C}{\mathbb C}
\newcommand{\N}{\mathbb N}
\renewcommand{\a}{\alpha}
\renewcommand{\b}{\beta}
\newcommand{\de}{\delta}
\newcommand{\e}{\varepsilon}
\newcommand{\f}{\varphi}
\newcommand{\p}{\psi}
\begin{document}

\title[Viscosity solutions to degenerate CMAE]{Viscosity solutions to degenerate complex Monge-Amp\`ere equations}

\author{Philippe Eyssidieux, Vincent Guedj, Ahmed Zeriahi }
\date{\today}

\address{Institut Fourier, Universit\'e Grenoble I, France}

\email{Philippe.Eyssidieux@ujf-grenoble.fr }

\address{LATP, Universit{\'e} Aix-Marseille I\\
13453 Marseille Cedex 13\\
 France}

\email{guedj@cmi.univ-mrs.fr}

\address{Institut de Math\'ematiques de Toulouse,  U.P.S.\\
F-31062 Toulouse cedex 09\\}

\email{zeriahi@math.ups-tlse.fr}

\begin{abstract} 
Degenerate complex Monge-Amp\`ere equations on compact K\"ahler manifolds have been 
recently intensively studied using tools from pluripotential theory. 
We develop an alternative approach based on the concept of viscosity solutions
 and
  compare systematically viscosity concepts with pluripotential theoretic ones. 

This approach works only for a rather restricted type of degenerate complex Monge-Amp\`ere 
equations. Nevertheless, we prove that the local potentials of the singular K\"ahler-Einstein 
metrics
constructed previously by the authors are continuous plurisubharmonic functions. 
They were previously known to be locally bounded. 

Another application is a  lower order construction with a $C^0$-estimate of  the solution 
to the Calabi conjecture which  does not use  
Yau's celebrated theorem. 
\end{abstract}

\maketitle

\section*{Introduction}

Pluripotential theory lies at the fundation of the  approach to degenerate complex 
Monge-Amp\`ere equations on compact
 K\"ahler manifolds as developed in \cite{Kol}, \cite{EGZ1,EGZ2}, 
\cite{TZ}, \cite{Z}, \cite{DP}, \cite{BEGZ}, \cite{BBGZ} and many others. 
This method is global in nature, since it relies on 
some delicate integrations by parts. 

On the other hand, a standard PDE approach to  second-order degenerate elliptic equations
is the method of viscosity solutions introduced in \cite{L},  see \cite{CIL} for a survey. 
This method is local in nature - and 
solves existence and unicity problems for weak solutions very efficiently. 
Our main goal in this article is to develop the viscosity approach for complex Monge-Amp\`ere equations
on compact complex manifolds.

Whereas the viscosity theory for real Monge-Amp\`ere equations has been developed by
 P.L. Lions and others (see e.g.\cite{IL}), 
the complex case hasn't been studied   until very recently. There is a viscosity approach to the Dirichlet problem for the complex Monge-Amp\`ere equation on a smooth hyperconvex domain in  a Stein manifold  in \cite{HL}. This recent article does not however prove any new results for complex Monge-Amp\`ere equations since this case serves there as a motivation to develop a deep
generalization of plurisubharmonic functions to Riemannian 
manifolds with some special geometric structure (e.g. exceptional holonomy group). To the best of our knowledge, there is no reference on viscosity solutions  to complex Monge-Amp\`ere equations on compact K\"ahler manifolds. 

There has been some recent interest in adapting viscosity methods to solve degenerate elliptic equations on compact or complete Riemannian manifolds \cite{AFS}. This  theory can be applied to complex Monge-Amp\`ere equations only in very restricted cases since 
it requires the Riemann curvature tensor to be nonnegative. Using \cite{Mok}, a compact K\"ahler manifold with a non-negative 
Riemannian curvature tensor has an \'etale cover
which is a product of a symmetric space of compact type (e.g.:  
 $\mathbb{P}^n(\C)$, grassmannians) and a  compact complex torus.
In particular, \cite{AFS} does not allow in general to construct a viscosity solution to the elliptic equation:
$$ 
(\omega+dd^c\f)^n=e^{\f}v         \leqno{(DMA)_{\omega,v}}
$$
where $\omega$ is a smooth K\"ahler form and $v$ a smooth volume on a general $n$-dimensional compact K\"ahler manifold $X$. A unique smooth solution has been however known to exist for more than thirty years thanks to the celebrated work of Aubin and Yau, \cite{Aub} \cite{Yau}.
This is a strong indication that the viscosity method should  work in this case to produce easily weak solutions

In this article, we confirm this guess, define and study viscosity solutions to degenerate complex Monge-Amp\`ere equations.  Our main technical result is: 

\vskip.2cm

\noindent{\bf{Theorem A.}} 
{\em  Let $X$ be a compact complex manifold,
 $\omega$ a continuous closed real (1,1)-form with 
${\mathcal C}^2$
local potentials and $v>0$ be a volume form with continuous density.  
Then the viscosity comparison principle holds for $(DMA)_{\omega,v}$. }

\vskip.2cm

The viscosity comparison principle (see below for details) 
differs substantially from the pluripotential 
comparison principle of \cite{BT2}  which is the main tool in
\cite{Kol}, \cite{GZ1}, \cite{EGZ1}. 
 This technical statement  is based on the Alexandroff-Bakelmann-Pucci maximum principle. We need however 
to modify the argument in \cite{CIL} by a localization technique.

\smallskip

Although we need to assume $v$ is positive in Theorem A, it is then easy to let it degenerate to a non negative density
in the process of constructing weak solutions to degenerate complex Monge-Amp\`ere equations. We obtain this way
the following:

\vskip.2cm

\noindent{\bf{Corollary B.}} 
{\em
Assume $X$ is as above, $v$ is merely semi-positive with $\int_X v >0$.
If $\omega\ge 0$ and $\int_X \omega^n>0 $ , then
there is a unique  viscosity solution  $\f \in C^0(X)$
to $(DMA)_{\omega,v}$. 

If $X$ is a compact complex manifold in the Fujiki class, it co\"incides with the unique
 locally bounded $\omega$-psh function $\f$ on $X$ such that 
$(\omega+dd^c\f)^n_{BT}=e^{\f} v$ in the pluripotential sense  \cite{EGZ1}.
}

\vskip.2cm

Recall that $\f$ is $\omega$-plurisubharmonic ($\omega$-psh for short) if it is an u.s.c. integrable function
such that $\omega+dd^c \f \geq 0$ in the weak sense of currents.

It was shown in this context by Bedford and Taylor  \cite{BT2} that
when $\f$ is bounded,
there exists a unique positive Radon measure $(\omega+dd^c\f)^n_{BT}$ with the following
property:  if $\f_j$ are smooth, locally $\omega$-psh and decreasing to $\f$, then the smooth measures
$(\omega+dd^c \f_j)^n$ weakly converge towards the measure $(\omega+dd^c\f)^n_{BT}$.
If the measures $(\omega+dd^c\f_j)^n$ (locally) converge to $e^{\f} v$, we say that 
the equality $(\omega+dd^c\f)^n_{BT}=e^{\e} v$ holds  {\it in the
pluripotential sense}.

Combining pluripotential and viscosity techniques, we can push our results further
and obtain the following:

\vskip.2cm

\noindent{\bf{Theorem C.}} 
{\em
 Let $X$ be a compact complex manifold in the Fujiki class. Let $v$ is be a semi-positive 
probability measure with $L^p$-density, $p>1$, and fix
$\omega\ge 0$ a smooth closed real semipositive $(1,1)$-form
such that $\int_X \omega^n=1 $.
The unique locally bounded $\omega$-psh function on $X$ normalized by
 $\int_X\f =0$ such 
that its Monge-Amp\`ere measure satisfies
 $(\omega+dd^c\f)^n_{BT}= v$ is continuous. 
}

\vskip.2cm

This continuity statement was obtained in \cite{EGZ1} under a regularization statement
for  $\omega$-psh functions that we were not able to obtain in full generality.
It could have been obtained using \cite{ AFS} in the  cases covered by this reference. 
However, for rational homogeneous spaces,  the regularization statement is  easily proved by convolution \cite{Dem2} and \cite{AFS} does not give anything new. 
A proof of the continuity when $X$ is projective  under mild technical assumptions
has been obtained in  \cite{DZ}.

\smallskip

We now describe the organization of the article.
The first section is devoted to the local theory. It makes the connection between 
the complex Monge-Amp\`ere operator and the 
viscosity subsolutions of inhomogenous complex Monge-Amp\`ere equations. 
We have found no reference for these basic facts.
 
In the second section, we introduce the viscosity comparison principle
 and give a proof of the main theorem . The gain 
with respect to classical pluripotential theory is that one can consider
 supersolutions  to prove continuity of pluripotential solutions for  $(\omega+dd^c \f)^n= e^{\f} v$. 

In the third section we apply these ideas to show that the singular K\"ahler-Einstein potentials 
constructed in \cite{EGZ1} are globally continuous. 

In the fourth and last section, we  stress some advantages of our method:
\begin{itemize}
\item it gives an alternative proof of Kolodziej's $C^0$-Yau theorem which does not depend on \cite{Yau}.  
\item it allows us to easily produce the unique negatively curved singular K\"ahler-Einstein metric
in the canonical class of a projective manifold of general type, a result obtained first in \cite{EGZ1} assuming \cite{BCHM}, then in \cite{BEGZ} by means
of asymptotic Zariski decompositions.
\end{itemize}
Then, we
establish further comparison principles: these could be useful when studying
similar problems were pluripotential tools do not apply.
 We end the article by some remarks on a possible interpretation of
 viscosity supersolutions in terms of pluripotential
theory  using Berman's plurisubharmonic
projection.

\medskip

The idea of applying viscosity methods to the  K\"ahler-Ricci flow was
 proposed originally in a remark from the preprint \cite{CLN}.
We hope that the technique developed here will have further applications. 
In a forthcoming work it will be applied to the K\"ahler-Ricci flow.

\section{Viscosity subsolutions
 to  $ (dd^c\f)^n= e^{\e \f} v$. } \label{sec:local}

The purpose of this section is to make the connection, in a purely
local situation,
 between the pluripotential theory of complex Monge-Amp\`ere
operators, as founded by Bedford-Taylor
 \cite{BT2}, and the concept of viscosity subsolutions
 developed by Lions et al.
(see \cite{IL,CIL}). 

\subsection{Viscosity subsolutions of $(dd^c\f)^n=v$}

Let $M = M^{(n)}$ be a (connected) complex 
manifold of dimension $n$ and $v$ a semipositive 
measure with continuous density.
In this section $B$ will denote the unit ball of $\C^n$ or 
its image under a coordinate chart in $M$.

\begin{defi} An upper semicontinuous function $\f: M\to \R \cup \{-\infty\}$ 
is said to be a viscosity subsolution of the Monge-Amp\`ere equation 
$$
(dd^c\f)^n = v \leqno (DMA)_v
$$
if it satisfies the following conditions
\begin{enumerate}
\item  $\f|_{M}\not \equiv -\infty$.
\item For every $x_0\in M$ and any ${\mathcal C}^2$-function $q$ defined on a neighborhood of $x_0$ such that 
$ \f-q \  \text{ has a local maximum at} \ x_0 $ then 
$$
(dd^c q)^n_{x_0} \ge v_{x_0}.
$$ 
\end{enumerate}
We will also say that $\f$ satisfies the differential inequality $(dd^c\f)^n \ge v$ in the viscosity sense on $M$.
\end{defi}

Note that if $v\ge v'$ then $(dd^c\f)^n \ge v$ in the viscosity sense implies  $(dd^c\f)^n \ge v'$. This holds
in particular  if $v'=0$. 

Another basic observation is that the class of subsolutions is stable under taking maximum:

\begin{lem} \label{lem:maxsubsolution}
If $\f_1,\f_2$ are subsolutions of $(dd^c\f)^n = v$, so is $\sup(\f_1,\f_2)$.
\end{lem}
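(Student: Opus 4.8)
The plan is to verify directly the two defining conditions of a viscosity subsolution for $\p := \sup(\f_1,\f_2)$. Condition (1) is immediate: $\p$ is upper semicontinuous as the pointwise maximum of two upper semicontinuous functions, and since at least $\f_1\not\equiv -\infty$ we have $\p\ge \f_1>-\infty$ somewhere, so $\p|_M\not\equiv -\infty$.

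For condition (2), I would fix $x_0\in M$ and a $\mathcal C^2$ function $q$ defined near $x_0$ such that $\p-q$ attains a local maximum at $x_0$, and aim to show $(dd^cq)^n_{x_0}\ge v_{x_0}$. The key reduction is to pass to whichever of $\f_1,\f_2$ is \emph{active} at $x_0$. Relabeling if necessary, assume $\p(x_0)=\f_1(x_0)$, i.e. $\f_1(x_0)\ge \f_2(x_0)$. I claim that $\f_1-q$ then has a local maximum at $x_0$ as well. Indeed, for $x$ in a small neighborhood of $x_0$,
$$
\f_1(x)-q(x)\;\le\;\p(x)-q(x)\;\le\;\p(x_0)-q(x_0)\;=\;\f_1(x_0)-q(x_0),
$$
where the first inequality uses $\f_1\le\p$ pointwise, the second that $\p-q$ has a local maximum at $x_0$, and the final equality the choice of labeling.

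Since $\f_1$ is a subsolution of $(DMA)_v$ and $q$ touches $\f_1$ from above at $x_0$ in the sense just established, condition (2) applied to $\f_1$ yields $(dd^cq)^n_{x_0}\ge v_{x_0}$. As $x_0$ and $q$ were arbitrary, $\p$ satisfies condition (2) and is therefore a subsolution. I do not expect any genuine obstacle here: the only point worth isolating is that the pointwise maximum selects, at each test point, a single function whose test-function inequality is inherited from that of the supremum, so the argument is entirely elementary and requires no further input. (The same scheme extends verbatim to the supremum of finitely many subsolutions.)
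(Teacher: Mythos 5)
Your proof is correct: the observation that the active function $\f_1$ (the one achieving $\sup(\f_1,\f_2)$ at $x_0$) inherits the local maximum of $\p-q$ at $x_0$, via $\f_1\le\p$ and $\f_1(x_0)=\p(x_0)$, is exactly the standard elementary argument, which is the one the paper has in mind when it declares the proof ``straightforward and left to the reader.'' Nothing is missing, and your remark that the definition requires no normalization $\f(x_0)=q(x_0)$ (so the chain of inequalities is all that is needed) confirms the argument goes through verbatim.
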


The proof is straightforward and left to the reader. We now observe that a function
$\f$ satisfies $(dd^c \f)^n \geq 0$ in the viscosity sense if and only if it is
plurisubharmonic:

\begin{prop} \label{viscpsh}The viscosity
 subsolutions of $(dd^c\f)^n=0$ are precisely the plurisubharmonic functions on $M$. 
\end{prop}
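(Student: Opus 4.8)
The plan is to prove the two inclusions separately. The inclusion ``psh $\Rightarrow$ subsolution'' is elementary; the converse requires one extra idea, since the defining inequality only constrains the \emph{determinant} of the complex Hessian of a test function, whereas plurisubharmonicity is about its \emph{positivity}. First I would dispose of the forward direction. Condition (1) is immediate for a psh $\f$. For condition (2), suppose $q$ is $\mathcal{C}^2$ near $x_0$ and $\f-q$ has a local maximum at $x_0$. Restricting to an arbitrary complex line $t\mapsto x_0+t\xi$, $t\in\C$, the function $\f(x_0+t\xi)$ is subharmonic while $q(x_0+t\xi)$ touches it from above at $t=0$; since subharmonic functions are viscosity subsolutions of $\Delta u\ge 0$, this forces $\Delta_t[q(x_0+t\xi)]|_{t=0}\ge 0$, i.e. $\sum_{j,k}\frac{\partial^2 q}{\partial z_j\partial\bar z_k}(x_0)\,\xi_j\bar\xi_k\ge 0$. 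As $\xi$ is arbitrary, the complex Hessian $H_q(x_0):=\left(\frac{\partial^2 q}{\partial z_j\partial\bar z_k}(x_0)\right)$ is positive semidefinite, so in particular $(dd^c q)^n_{x_0}$, a positive multiple of $\det H_q(x_0)$ times Euclidean volume, is $\ge 0$. Note this proves more than required.

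For the converse, let $\f$ be a viscosity subsolution. Here lies the main obstacle: the inequality $(dd^c q)^n_{x_0}\ge 0$ only gives $\det H_q(x_0)\ge 0$, which is strictly weaker than $H_q(x_0)\ge 0$, as a Hermitian matrix with several negative eigenvalues can have positive determinant. The key idea I would use is a perturbation that converts a single negative eigenvalue into a negative determinant. Suppose some test function $q$ touching $\f$ from above at $x_0$ has $H_q(x_0)$ not positive semidefinite. After a unitary change of coordinates centered at $x_0$ we may assume $H_q(x_0)=\mathrm{diag}(\lambda_1,\dots,\lambda_n)$ with $\lambda_1<0$. For $A>0$ set $q_A(z)=q(z)+A\sum_{j\ge 2}|z_j|^2$. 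Subtracting a nonnegative function vanishing at $x_0$ keeps $x_0$ a local maximum of $\f-q_A$, while $H_{q_A}(x_0)=\mathrm{diag}(\lambda_1,\lambda_2+A,\dots,\lambda_n+A)$ has negative determinant for $A$ large, contradicting $(dd^c q_A)^n_{x_0}\ge 0$. Hence every test function touching $\f$ from above at any point has positive semidefinite complex Hessian there.

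It remains to deduce plurisubharmonicity from this Hessian positivity. Taking traces shows $\f$ is a viscosity subsolution of $\Delta_{\R^{2n}}\f\ge 0$, hence subharmonic, so $\f\in L^1_{loc}$ and $\f\not\equiv-\infty$ on every open set. To upgrade to psh I would regularize by sup-convolution $\f^\delta(x)=\sup_y[\f(y)-|x-y|^2/(2\delta)]$, which decreases to $\f$ as $\delta\downarrow 0$ and is semiconvex, hence twice differentiable almost everywhere by Alexandrov's theorem. The condition established in the previous paragraph says exactly that $\f$ is a viscosity subsolution of the translation-invariant degenerate elliptic requirement that the least eigenvalue of the complex Hessian be nonnegative, and such subsolutions are preserved by sup-convolution. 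At each point where $\f^\delta$ is twice differentiable it serves as its own touching test function, so $H_{\f^\delta}\ge 0$ there; a semiconvex function with almost-everywhere nonnegative complex Hessian is psh, and $\f=\lim_{\delta\downarrow 0}\f^\delta$ is then a decreasing limit of psh functions, hence psh. The only genuinely nonroutine ingredient is the eigenvalue perturbation above; the sup-convolution facts are standard, see \cite{CIL}.
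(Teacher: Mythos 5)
Your proposal is correct, and its central idea coincides with the paper's: your eigenvalue perturbation $q_A=q+A\sum_{j\ge 2}|z_j|^2$ is precisely the paper's combination of Lemma \ref{lem:semipositive} (itself proved by diagonalizing $Q$) with the observation that adding a semipositive Hermitian quadratic vanishing at $x_0$ preserves the local maximum of $\f-q$; likewise your restriction-to-complex-lines argument for the easy inclusion is a minor variant of the paper's mean-value argument on balls and ellipsoids. Where you genuinely diverge is the last step, upgrading ``every touching test function has semipositive complex Hessian at the touching point'' to plurisubharmonicity of $\f$. The paper stays distributional: $\f$ is a viscosity subsolution of $\Delta_H\f=0$ for every constant positive definite Hermitian $H$, H\"ormander's equivalence of viscosity and distributional subsolutions for constant-coefficient Laplacians gives $\Delta_H\f\ge 0$ in the sense of distributions, and one then lets $H$ degenerate to the rank-one matrix $(w^i\bar w^j)$. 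Your route via sup-convolution and Alexandrov differentiability is a legitimate alternative (and is the tool the paper itself deploys later, e.g.\ in Proposition \ref{pro:visc=pluripot+tw}), but it carries two points you should make explicit. First, a subsolution in the sense of Definition 1.1 may be unbounded below (e.g.\ $\log|z_1|$), and the ``magic property'' that sup-convolution preserves viscosity subsolutions requires the supremum defining $\f^\delta$ to be attained at an interior point, which is guaranteed on a shrunk domain only when $\f$ is locally bounded below; fix this by first replacing $\f$ with $\max(\f,-M)$ (constants are subsolutions of your eigenvalue inequality and the maximum of two subsolutions is again one), and then let $M\to+\infty$, a decreasing limit of psh functions not identically $-\infty$ being psh. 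Second, the assertion ``semiconvex with almost-everywhere nonnegative complex Hessian is psh'' needs the Lebesgue decomposition of the distributional Hessian: the singular part is automatically a positive matrix-valued measure because $\f^\delta+C|x|^2$ is convex, while the absolutely continuous part is the a.e.\ pointwise Hessian by Alexandrov. The paper's distributional route avoids both issues at no extra cost; yours buys an explicit decreasing semiconvex approximation, which is closer in spirit to the viscosity toolkit used in the rest of the article.
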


\begin{proof}
The statement is local and we can assume $M=B$.  
Let $\f$ be a subsolution of $(dd^c\f)^n=0$.  Let $x_0\in B$ such that $\f(x_0)\not= -\infty$. 
Let $q\in {\mathcal C}^2(B)$ such that $\f-q$ has a local maximum at $x_0$. Then the hermitian matrix $Q=dd^cq_{x_0}$
satisfies      $\det(Q)\ge 0$. For every hermitian semipositive matrix $H$, we also have $\det(Q+H)\ge 0$ since,
a fortiori for  
$q_H=q+H(x-x_0)$, $\f-q_H$ has a local maximum at $x_0$ too.

It follows from Lemma \ref{lem:semipositive} below that $Q=dd^cq_{x_0}$ is actually semipositive.
We infer that for every positive definite hermitian matrix $(h^{i\bar j})$
$\Delta_H q (x_0) := h^{i\bar j} \frac{\partial^2 q}{\partial z_i \partial \bar z_j} (x_0) \ge 0$, i.e.: $\f$ is a viscosity subsolution 
of $ \Delta_H \f =0$. In appropriate complex coordinates 
this constant coefficient differential operator is nothing but the Laplace operator.  Hence, \cite{Hor} prop 3.2.10' p. 147 applies to the effect that
$\f$ is $\Delta_H$-subharmonic hence is in $L^1_{loc}(B)$ and satisfies 
$\Delta_H  \f \ge 0$ in the sense of distributions. Let $(w^i)$ be any vector in $\C^n$. Consider a 
positive hermitian matrix $(h^{i\bar j})$ degenerating to the rank one matrix $(w^i\bar w^j)$. By continuity, we have $w^i\bar w^j \frac{\partial^2 \f}{\partial z_i \partial \bar z_j}\ge 0$ in the sense of distributions.  Thus $\f$ is plurisubharmonic. 

\smallskip

Conversely, assume $\f$ is plurisubharmonic. Fix $x_0\in B$, $q\in {\mathcal C}^2(B))$ such that
 $
  \f-q \  \text{ has a local maximum at} \ x_0 .
  $
 Then, for every small enough ball $B'\subset B$ centered at $x_0$, we have 
 $$
 \f(x_0)-q(x_0) \ge \frac{1}{V(B')} \int_{B'} (\f-q) \, dV,
 $$ 
 hence
 $$
 \frac{1}{V(B')} \int_{B'} q \, dV -q(x_0) \ge \frac{1}{V(B')} \int_{B'} \f  \, dV-\f(x_0) \ge 0. 
 $$
 Letting the radius of $B'$ tend to $0$, it follows since $q$ is ${\mathcal C}^2$ that $\Delta q_{x_0} \ge 0$. Using complex ellipso\"ids instead of balls\footnote{This amounts to a linear change of complex coordinates.},  we conclude that $\Delta_H q (x_0)  \ge 0$ for every positive definite hermitian matrix.
Thus $dd^c q _{x_0} \ge 0$ and  $(dd^c\f)^n \ge 0$ in the viscosity sense. 
\end{proof}

The following lemma is easily proven by diagonalizing $Q$:

\begin{lem} \label{lem:semipositive}
Let $Q$ be an hermitian matrix such that, for every semipositive hermitian matrix $H$, $\det(Q+H)\ge 0$ then $Q$ is semipositive.
\end{lem}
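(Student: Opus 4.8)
The plan is to diagonalize $Q$ and reduce the statement to a one-line sign argument. Since $Q$ is Hermitian, there is a unitary $U$ with $U^*QU = D = \operatorname{diag}(\lambda_1,\dots,\lambda_n)$, where the $\lambda_i\in\R$ are the eigenvalues of $Q$. First I would observe that the hypothesis is invariant under this change of basis: for any Hermitian $H$ one has $\det(Q+H)=\det\big(D+U^*HU\big)$, and the map $H\mapsto U^*HU$ is a bijection of the cone of semipositive Hermitian matrices onto itself. Hence it suffices to treat the case $Q=D$ diagonal with real entries, and the conclusion ``$Q$ semipositive'' becomes ``$\lambda_i\ge 0$ for all $i$''.

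Next I would argue by contradiction. Suppose some eigenvalue is negative; after reordering, say $\lambda_1<0$. The idea is to pick a semipositive $H$ that renders all but the first factor of the determinant positive while leaving the first slot untouched. Concretely, take $H=\operatorname{diag}(0,t,\dots,t)$ with $t>0$; this is semipositive, and $D+H=\operatorname{diag}(\lambda_1,\lambda_2+t,\dots,\lambda_n+t)$, so that $\det(D+H)=\lambda_1\prod_{j\ge 2}(\lambda_j+t)$. For $t$ large enough every factor $\lambda_j+t$ with $j\ge 2$ is strictly positive, whence $\det(D+H)<0$, contradicting the hypothesis. Therefore every $\lambda_i\ge 0$ and $Q$ is semipositive.

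There is no serious obstacle here, consistent with the author's remark that the lemma is ``easily proven by diagonalizing $Q$''. The only two points deserving a moment's care are the verification that unitary conjugation preserves both the determinant and the semipositivity cone, so the reduction to the diagonal case is legitimate, and the choice of the test matrix $H$ guaranteeing that precisely the negative eigenvalue survives as the sole negative factor in the product. Both are routine.
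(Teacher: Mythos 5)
Your proof is correct and follows exactly the route the paper indicates (``easily proven by diagonalizing $Q$''): reduce to the diagonal case by unitary conjugation, then kill a hypothetical negative eigenvalue with a test matrix $H=\operatorname{diag}(0,t,\dots,t)$ for $t$ large. Both the invariance of the hypothesis under conjugation and the sign argument are handled properly, so nothing is missing.
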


Recall that when $\f$ is plurisubharmonic and locally bounded, its Monge-Amp\`ere measure
$(dd^c \f)_{BT}^n$ is well defined \cite{BT2} (as the unique
 limit of the smooth measures $(dd^c\f_j)^n$,
where $\f_j$ is any sequence of smooth psh functions decreasing to $\f$).
Our next result makes the basic connection between this pluripotential notion and its viscosity counterpart:

\begin{prop} \label{pro:visc=pluripot}
Let $\f$ be a locally bounded upper semi-continuous function in $M$.
It satisfies $(dd^c\f)^n \ge v$ in the viscosity sense iff 
it is plurisubharmonic and its Monge-Amp\`ere measure satisfies 
$(dd^c\f)^n_{BT} \ge v$ in the pluripotential sense.
\end{prop}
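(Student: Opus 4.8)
The plan is to prove both implications by relating the pointwise viscosity inequality to the weak/pluripotential inequality, using the already-established Proposition~\ref{viscpsh} to handle plurisubharmonicity for free. Since the statement is local, I would work on a ball $B$ and assume $\f$ is locally bounded and u.s.c. Both directions share the strategy of comparing $\f$ with smooth functions; the natural bridge is regularization by sup-convolution or, more simply for this problem, by decreasing sequences of smooth psh functions as in the Bedford--Taylor definition of $(dd^c\f)^n_{BT}$.

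For the forward direction, suppose $\f$ is a viscosity subsolution of $(dd^c\f)^n\ge v$. Since $v\ge 0$, it is a fortiori a subsolution of $(dd^c\f)^n\ge 0$, so by Proposition~\ref{viscpsh} $\f$ is plurisubharmonic, and being locally bounded its measure $(dd^c\f)^n_{BT}$ is well defined. The task is to show $(dd^c\f)^n_{BT}\ge v$ as measures. First I would reduce to the case where $\f$ is itself smooth: if $\f$ is $\mathcal C^2$, then at every point $q=\f$ is an admissible test function with a local maximum of $\f-q$, so the viscosity inequality gives the pointwise smooth inequality $(dd^c\f)^n_{x}\ge v_x$, which is exactly $(dd^c\f)^n_{BT}\ge v$. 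For general $\f$ I would regularize: take $\f_\e=\f*\rho_\e$ the standard convolution, which decreases to $\f$ and is smooth psh. The key is that the viscosity subsolution property passes to $\f_\e$ up to a controlled loss in the density, because a convex combination (integral average) of test functions at nearby points still tests $\f_\e$; concretely, if $\f-q$ has a max at $x_0$ then averaging the inequalities $(dd^c q)^n\ge v$ over translates yields, via concavity of $\det^{1/n}$ on the positive cone, an inequality $(dd^c\f_\e)^n\ge v_\e$ where $v_\e\to v$ by continuity of the density. Passing to the limit in the weak convergence $(dd^c\f_\e)^n\to (dd^c\f)^n_{BT}$ then gives the claim.

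For the converse, suppose $\f$ is psh with $(dd^c\f)^n_{BT}\ge v$ in the pluripotential sense; I must verify the viscosity inequality. Fix $x_0$ and a $\mathcal C^2$ test function $q$ with $\f-q$ attaining a local maximum at $x_0$, and suppose for contradiction that $(dd^c q)^n_{x_0}< v_{x_0}$. By continuity of both sides this strict inequality persists on a small ball $B'$ around $x_0$, and after subtracting a constant I may assume $\f\le q$ on $B'$ with equality at $x_0$. The idea is to compare Monge--Amp\`ere masses: on one hand $(dd^c q)^n< v$ pointwise on $B'$ forces $\int_{B'}(dd^c q)^n < \int_{B'} v \le \int_{B'}(dd^c\f)^n_{BT}$; on the other hand the touching condition $\f\le q$ with a contact point should force the reverse mass inequality by a comparison-principle argument for bounded psh functions (the Bedford--Taylor comparison principle, or a direct domination-of-mass estimate near the contact point). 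This contradiction yields $(dd^c q)^n_{x_0}\ge v_{x_0}$, which is the viscosity subsolution property.

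I expect the main obstacle to be the converse direction, specifically extracting the contradiction rigorously from the local contact $\f\le q$ at a single point. A naive global mass comparison over $B'$ need not contradict a merely local touching, since $\f$ could be much smaller than $q$ away from $x_0$, carrying extra Monge--Amp\`ere mass elsewhere. The correct approach is to localize the contact: replace $q$ by a small perturbation (adding a term like $\de(|z-x_0|^2-r^2)$) so that $\f< q$ strictly on $\partial B'$ while still $\f(x_0)=q(x_0)$, making the contact set compactly contained in $B'$; then the comparison principle applied to the bounded functions $\f$ and $q$ on $B'$ gives $\int_{\{\f<q\}}(dd^c\f)^n_{BT}\le \int_{\{\f<q\}}(dd^c q)^n$, and combining with the touching at $x_0$ and the strict pointwise inequality produces the needed contradiction. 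Getting the perturbation and the boundary comparison to fit together cleanly is the delicate technical point.
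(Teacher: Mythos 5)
Your second implication (pluripotential $\Rightarrow$ viscosity) is essentially the paper's argument: perturb the test function $q$ by a term of the form $\e(\vert x-x_0\vert^2-r^2)$ to force strict separation on $\partial B'$, then invoke the Bedford--Taylor comparison principle on $B'$ to contradict the contact at $x_0$. You correctly identify the localization of the contact set as the delicate point, and your fix is the one the paper uses.

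The direction viscosity $\Rightarrow$ pluripotential, however, contains a genuine gap. You propose to pass from the pointwise viscosity inequality to the regularizations $\varphi_\e=\varphi*\rho_\e$ by ``averaging the inequalities $(dd^cq)^n\ge v$ over translates,'' on the grounds that a convex combination of test functions at nearby points still tests $\varphi_\e$. This mechanism does not exist: if $q$ touches $\varphi_\e$ from above at $x_0$, there is no way to decompose $q$ into $\mathcal{C}^2$ functions touching $\varphi$ from above at the translates $x_0-y$ (a locally bounded psh function need not admit any upper test function at a given point, so there is nothing to average). The viscosity inequality is a statement about jets at the exceptional points where upper contact occurs, and it does not convolve. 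The missing idea is the linearization step: by the Bellman/Krylov representation $\det(Q)^{1/n}=\inf\{\mathrm{tr}(HQ)\,:\,H>0,\ \det H=n^{-n}\}$ (Lemma \ref{lem:semipositive}'s companion, Gaveau's lemma), the viscosity inequality $(dd^cq)^n_{x_0}\ge v_{x_0}$ implies that $\varphi$ is a viscosity subsolution of each \emph{linear} constant--coefficient inequality $\Delta_H\varphi\ge v^{1/n}$; by H\"ormander's characterization of subharmonicity these linear inequalities then hold in the sense of distributions, and distributional linear inequalities \emph{do} convolve, giving $\Delta_H\varphi_\e\ge(v^{1/n})_\e$ for all admissible $H$; taking the infimum over $H$ recovers $(dd^c\varphi_\e)^n\ge((v^{1/n})_\e)^n$, and only then can one let $\e\to0$ using the Bedford--Taylor continuity along decreasing sequences. (One also needs the preliminary reductions to $v>0$ H\"older, then $v>0$ continuous, then $v\ge0$ via $\varphi+\e\Vert z\Vert^2$.) Without the detour through the linearized distributional inequalities, the concavity of $\det^{1/n}$ has nothing to act on, and the convolution step cannot be justified.
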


\begin{proof}
We first recall the following classical formulation of the pluripotential comparison principle for the complex Monge-Amp\`ere operator,
acting on bounded plurisubharmonic functions \cite{BT2}:

\begin{lem}\label{cpd}
Let $u,w\in PSH\cap L^{\infty} (B) $. 

If $u\ge w$ near $\partial B$ and $(dd^c u)^n_{BT}\le (dd^c w)^n_{BT}$ then $u\ge w$. 
\end{lem}

Assume $\f \in PSH\cap L^{\infty} (B)$ satisfies $(dd^c \f)^n_{BT}\ge v$. Consider $q$ a ${\mathcal C}^2$ function such that
$\f-q$ achieves a local maximum at $x_0$ and $\f(x_0)=q(x_0)$. Since $\f$ satisfies $(dd^c\f)^n\ge 0$ in the 
viscosity sense,  $(dd^cq)_{x_0}^n \ge 0$ and 
$dd^cq_{x_0} \ge 0$ by lemma 	\ref{lem:semipositive}. Assume $(dd^cq)_{x_0}^n < v_{x_0}$. 
Let $q^{\e}:=q+ \e\| x-x_0 \|^2 $. Choosing $\e>0$ small enough, we have  $0<(dd^cq^{\e}_{x_0})^n < v_{x_0}$. 
Since $v$ has continuous density, we can chose a small ball $B'$ containing $x_0$ of radius $r>0$ such that
$\bar q^{\e}=q^{\e}-\e \frac{r^2}{2} \ge \f$ near $\partial B'$ and 
$(dd^c \bar q^{\e})^n_{BT} \le (dd^c \f)^n_{BT}$. 
The comparison principle (Lemma \ref{cpd})
yields $\bar q^{\e} \ge \f$ on $B'$. But this fails at $x_0$. Hence $(dd^c q )^n_{x_0} \ge v_{x_0}$ and $\f$ 
is a viscosity subsolution. 

\medskip

Conversely assume $\f$ is a viscosity subsolution.  Fix $x_0\in B$ such that $\f(x_0)\not= -\infty$
and $q\in {\mathcal C}^2$ such that $\f-q$ has a local maximum at $x_0$. Then the hermitian matrix $Q=dd^cq_{x_0}$
satisfies      $\det(Q)\ge v_{x_0}$.

Recall that the classical trick (due to Krylov) of considering the complex Monge-Amp\`ere equation as a Bellmann equation 
relies on the following:

\begin{lem} \cite{Gav}
Let $Q$ be a $n\times n $ non negative hermitian matrix, then
$$
 \det(Q)^{1/n}= \inf \{ \rm{tr}(H Q)  \, | \, H \in H_n^+ \text{ and } \det(H)=n^{-n} \},
$$ 
where $H_n^+$ denotes the set of positive hermitian $n \times n$ matrices.
\end{lem}

Applying this to our situation, it follows that for every positive definite hermitian matrix 
$(h_{i\bar j})$ with $\det(h)=n^{-n}$, $\Delta_H q (x_0) := h_{i\bar j} \frac{\partial^2 q}{\partial z_i \partial \bar z_j} (x_0) \ge v^{1/n}(x_0)$, i.e.
$\f$ is a viscosity subsolution 
of the linear equation $\Delta_H \f \ge v^{1/n}$.
 
 This is a constant coefficient linear partial differential
equation.  Assume $v^{1/n}$ is $C^{\alpha}$ with $\alpha>0$ and choose a ${\mathcal C}^2$ solution of 
$\Delta_H \f = v^{1/n}$ in a neighborhood of $x_0$. Then $u=\f-f$ 
satisfies $\Delta_H u\ge 0$ in the viscosity sense. 
Once again, \cite{Hor} prop 3.2.10' p. 147 applies to the effect that
$u$ is $\Delta_H$-subharmonic hence  
$\Delta_H \f \ge v^{1/n}$ in the sense of positive Radon measures. 

Using convolution to regularize $\f$ and setting $\f_{\e}=\f * \rho_{\e}$ we see that 
 $\Delta_H \f_{\e}\ge (v^{1/n})_{\e}$. 
 Another application of the above lemma yields 
 $$
 (dd^c\f_{\e})^n \ge ( (v^{1/n})_{\e})^n.
 $$ 
 Here $\tilde \f_k=\f_{1/k}$ is a decreasing sequence of smooth functions converging to $\f$. 
 Continuity of $(dd^c\f)^n_{BT}$ with respect to such a sequence \cite{BT2} yields $(dd^c\f)^n_{BT} \ge v$. 
 
 This settles the case when $v>0$ and $v$ is H\"older continuous. In case $v>0$ is merely continuous
 we observe that $v=\sup\{w | w \in {\mathcal C}^{\infty}, \ v\ge w>0 \}$. 
  Taking into account the fact that
 any subsolution of $(dd^c\f)^n=v$ is a subsolution of $(dd^c\f)^n =w$ provided $v\ge w$ we conclude $(dd^c\f)^n_{BT} \ge v$. 
 
 In the general case, we observe that $\psi_{\e}(z)=\f(z)+\e \| z \|^2$ satisfies $(dd^c\psi_{\e})^n \ge v+ \e^n \lambda$ in the viscosity sense with $\lambda$ the euclidean volume form. 
Hence $$(dd^c\psi_{\e})^n_{BT}\ge v.$$
 From which we conclude that $(dd^c\f)^n_{BT} \ge v$. 
 \end{proof}
 
 \begin{rem}
 The basic idea of the proof is closely related to the 
method in \cite{BT1} and is the topic treated in \cite{Wi}. 
The next section contains a 
more powerful version of 
this argument. 
However, it uses  sup-convolution which is not a conventional
tool in pluripotential theory and we felt that
keeping this version would  improve the exposition.
 \end{rem}
 
 We now relax the assumption that $\f$ being bounded and connect viscosity subsolutions to
 pluripotential subsolutions through the following:
 
 \begin{theo} \label{thm:visc=pluripot}
 Assume $v = (dd^c\rho)^n_{BT}$ for some bounded plurisubharmonic  function $\rho$.
 Let $\f$ be an upper semicontinuous function such that $\f \not \equiv -\infty$ on any connected component. The
 following are equivalent:
 
 i) $\f$ satisfies $(dd^c\f)^n\ge v$ in the viscosity sense;
 
 ii) $\f$ is plurisubharmonic and for all $c>0$,
 $(dd^c\sup[\f, \rho-c])^n_{BT}\ge v$. 
 \end{theo}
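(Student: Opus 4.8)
\emph{Plan.} The plan is to reduce both implications to Proposition \ref{pro:visc=pluripot}, applied not to $\f$ itself but to the truncated function $u_c := \sup[\f, \rho-c]$. The point of this truncation is that $\f$ is upper semicontinuous, hence locally bounded above, while $\rho$ is bounded; therefore $u_c$ is locally bounded for every $c>0$, its Bedford--Taylor Monge-Amp\`ere measure is well defined, and the bounded case treated in Proposition \ref{pro:visc=pluripot} is available for $u_c$. Note also that $u_c$, being the maximum of the two plurisubharmonic functions $\f$ and $\rho-c$, is plurisubharmonic and upper semicontinuous.

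For the implication (i) $\Rightarrow$ (ii), I would first observe that $v\ge 0$ forces $(dd^c\f)^n\ge 0$ in the viscosity sense, so $\f$ is plurisubharmonic by Proposition \ref{viscpsh}. Next, since $\rho$ is bounded plurisubharmonic with $(dd^c\rho)^n_{BT}=v$, Proposition \ref{pro:visc=pluripot} shows that $\rho$ is a viscosity subsolution of $(dd^c\cdot)^n=v$, and hence so is the translate $\rho-c$, whose complex Hessian is unchanged by the additive constant. Applying Lemma \ref{lem:maxsubsolution} to the two subsolutions $\f$ and $\rho-c$ shows that $u_c$ is again a viscosity subsolution of the same equation. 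Since $u_c$ is locally bounded, Proposition \ref{pro:visc=pluripot} converts this into the pluripotential inequality $(dd^c u_c)^n_{BT}\ge v$, which is exactly (ii).

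For the converse (ii) $\Rightarrow$ (i), I would argue pointwise. Here $\f$ is plurisubharmonic by hypothesis. Fix $x_0$ with $\f(x_0)\neq-\infty$ and a test function $q\in{\mathcal C}^2$ such that $\f-q$ has a local maximum at $x_0$; the goal is $(dd^cq)^n_{x_0}\ge v_{x_0}$. Choosing $c$ large enough that $\rho(x_0)-c<\f(x_0)$, which is possible because $\rho$ is bounded and $\f(x_0)$ is finite, one has $u_c(x_0)=\f(x_0)$. The key local step is to transfer the maximum: using upper semicontinuity of $\rho$ together with continuity of $q$, one gets $\limsup_{x\to x_0}[\rho(x)-c-q(x)]\le \rho(x_0)-c-q(x_0)<u_c(x_0)-q(x_0)$, so $\rho(x)-c-q(x)<u_c(x_0)-q(x_0)$ on a neighborhood of $x_0$, while the local maximum of $\f-q$ gives $\f(x)-q(x)\le u_c(x_0)-q(x_0)$ there. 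Taking the supremum of the two branches shows that $u_c-q$ has a local maximum at $x_0$. Since $u_c$ is locally bounded and satisfies $(dd^c u_c)^n_{BT}\ge v$ by hypothesis, Proposition \ref{pro:visc=pluripot} says $u_c$ is a viscosity subsolution, whence $(dd^cq)^n_{x_0}\ge v_{x_0}$.

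The main obstacle is precisely this localization in the converse direction: one must guarantee that a local maximum of $\f-q$ genuinely propagates to a local maximum of the truncated function $u_c-q$. This is where upper semicontinuity of $\rho$ and the strictness of $\rho(x_0)-c<\f(x_0)$ are essential, in order to push the $\rho$-branch strictly below the value $u_c(x_0)-q(x_0)$ throughout a whole neighborhood; everything else is a direct appeal to the bounded case and to the stability of subsolutions under maxima.
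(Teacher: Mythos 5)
Your proposal is correct and follows essentially the same route as the paper: both directions are reduced to the locally bounded case of Proposition \ref{pro:visc=pluripot} via the truncation $\sup(\f,\rho-c)$, using Lemma \ref{lem:maxsubsolution} for (i)$\Rightarrow$(ii) and a transfer of the test function to the truncated function (for $c$ large enough that $\rho-c$ drops below $\f(x_0)$) for (ii)$\Rightarrow$(i). Your localization via upper semicontinuity of $\rho-c-q$ is a slightly more local phrasing of the paper's observation that $q\ge\sup(\f,\rho-c)$ near $x_0$ for $c$ big, and is equally valid.
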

 
 Observe that these properties are local and that the semi-positive measure $v$ can always be written locally
 as $v=(dd^c\rho)^n_{BT}$ for some bounded plurisubharmonic function $\rho$ \cite{Kol}.

 \begin{proof}   
 Assume first that  $\f$ is a viscosity subsolution of $(dd^c \rho)^n= v$.
 Since $\rho-c$ is also a subsolution, it follows from Lemma \ref{lem:maxsubsolution} that $\sup(\f, \rho-c)$ is a subsolution,
 hence Proposition \ref{pro:visc=pluripot} yields $(dd^c\sup(\f, \rho-c))^n_{BT} \ge v$.    

Conversely, fix $x_0\in M$ and assume i) holds. If $\f$ is locally bounded near $x_0$, Proposition \ref{pro:visc=pluripot}
implies that $\f$ is a viscosity subsolution near $x_0$. 

Assume $\f(x_0)\not= -\infty $ but $\f$ is not locally bounded near $x_0$. Fix 
$q\in {\mathcal C}^2$ such that
$q\ge \f$ near $x_0$ and $q(x_0)=\f(x_0)$. Then for $c>0$ big enough we have $q\ge \f_c=\sup(\f, \rho-c)$ and 
$q(x_0)=\f_c(x_0)$, hence $(dd^cq) _{x_0}^n \ge v_{x_0}$ by Proposition \ref{pro:visc=pluripot} again.

Finally if $\f(x_0)=-\infty$ there are no $q$ to be tested against the differential inequality, hence it holds for every test function $q$.
  \end{proof}

 Condition ii) might seem a bit cumbersome. The point is that
 the Monge-Amp\`ere operator can not be defined on the
 whole space of plurisubharmonic functions. 
 The above arguments actually work in any class of plurisubharmonic functions in which the Monge-Amp\`ere
 operator is continuous by decreasing limits of locally bounded functions and the
 comparison principle holds. These are  precisely the finite energy classes 
 studied in
 \cite{Ceg2,GZ2}. 
 
When $\f$ belongs to its
 domain of definition, condition ii) is equivalent to $(dd^c\f)^n_{BT} \geq v$ in the pluripotential sense.
To be more precise, we have:

\begin{coro}
 Let $\Omega\subset \C^n$ be a hyperconvex domain. Then $\f\in \mathcal{E}(\Omega)$, see \cite{Ceg3} for the notation,
satisfies $(dd^c\f)^n\ge v$ in the viscosity sense iff 
its Monge-Amp\`ere measure $(dd^c \f)^n_{BT}$ satisfies  $(dd^c\f)^n_{BT}\ge v$. 
\end{coro}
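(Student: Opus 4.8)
The plan is to reduce the statement to Theorem \ref{thm:visc=pluripot} and then feed in the two structural properties that characterize the Cegrell class $\mathcal{E}(\Omega)$: the complex Monge-Amp\`ere operator is well defined and local on $\mathcal{E}(\Omega)$, and it is continuous along decreasing sequences of locally bounded functions. Both the viscosity inequality and the pluripotential inequality are local, so after shrinking to a small ball $B'\Subset\Omega$ we may assume, by \cite{Kol}, that $v=(dd^c\rho)^n_{BT}$ for some bounded plurisubharmonic $\rho$, and that $\f$ restricts to an element of the corresponding local Cegrell class (these classes are local). Setting $\f_c:=\sup(\f,\rho-c)$, note that $\f_c$ is locally bounded (bounded below by $\rho-c$ and above by the local supremum of $\f$) and decreases to $\f$ as $c\to\infty$. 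Theorem \ref{thm:visc=pluripot} then reduces the Corollary to the pluripotential equivalence $[(dd^c\f_c)^n_{BT}\ge v \text{ for all } c>0] \Leftrightarrow (dd^c\f)^n_{BT}\ge v$.

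For the direction ``$\f$ viscosity subsolution $\Rightarrow (dd^c\f)^n_{BT}\ge v$'', Theorem \ref{thm:visc=pluripot} gives $(dd^c\f_c)^n_{BT}\ge v$ for every $c>0$. Letting $c\to\infty$, the $\f_c$ form a decreasing sequence of locally bounded plurisubharmonic functions converging to $\f\in\mathcal{E}(\Omega)$, so the continuity of the Monge-Amp\`ere operator on $\mathcal{E}(\Omega)$ yields the weak convergence $(dd^c\f_c)^n_{BT}\to(dd^c\f)^n_{BT}$; since $v$ is fixed, the inequality passes to the limit and $(dd^c\f)^n_{BT}\ge v$.

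For the converse, assume $(dd^c\f)^n_{BT}\ge v$ and fix $c>0$; I must show $(dd^c\f_c)^n_{BT}\ge v$, after which Theorem \ref{thm:visc=pluripot} delivers the viscosity subsolution property. On the open set $\{\f>\rho-c\}$ the functions $\f_c$ and $\f$ coincide, so by locality $(dd^c\f_c)^n_{BT}=(dd^c\f)^n_{BT}\ge v$ there; on the open set $\{\f<\rho-c\}$ one has $\f_c=\rho-c$, hence $(dd^c\f_c)^n_{BT}=(dd^c\rho)^n_{BT}=v$. It remains to control the measure on the contact set $\{\f=\rho-c\}$, and this is the crux: here I would invoke the standard pluripotential inequality for the Monge-Amp\`ere measure of a maximum, $(dd^c\sup(u,w))^n_{BT}\ge \mathbf{1}_{\{u>w\}}(dd^c u)^n_{BT}+\mathbf{1}_{\{u\le w\}}(dd^c w)^n_{BT}$, valid for $u\in\mathcal{E}(\Omega)$ and $w$ locally bounded. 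Applying it with $u=\f$, $w=\rho-c$ and using $(dd^c\f)^n_{BT}\ge v$ together with $(dd^c(\rho-c))^n_{BT}=v$ gives $(dd^c\f_c)^n_{BT}\ge v$ on all of $B'$.

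The main obstacle is this last step. Unlike the bounded setting of Proposition \ref{pro:visc=pluripot}, the maximum here has one unbounded argument $\f\in\mathcal{E}(\Omega)$, so one cannot simply combine Lemma \ref{lem:maxsubsolution} with Proposition \ref{pro:visc=pluripot}; one must instead rely on the fact that the Monge-Amp\`ere operator, its locality, and the maximum formula all extend to $\mathcal{E}(\Omega)$ in Cegrell's framework, and in particular that the contact set carries no unfavorable mass --- a point that is transparent precisely because $v$ is absolutely continuous with continuous density.
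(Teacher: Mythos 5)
Your proof is correct and is essentially the argument the paper intends: the paper states this corollary without a separate proof, as a direct consequence of Theorem \ref{thm:visc=pluripot} combined with the remark that its arguments persist in any class of psh functions where the Monge-Amp\`ere operator is continuous along decreasing limits of locally bounded functions and the comparison principle holds, i.e.\ the Cegrell finite-energy classes. Your forward direction (condition ii) of Theorem \ref{thm:visc=pluripot} gives $(dd^c\f_c)^n_{BT}\ge v$ for all $c$, then let $c\to\infty$ and use continuity of the operator under decreasing limits in $\mathcal{E}(\Omega)$) is exactly this. The one substantive variation is the converse: where the paper alludes to running the comparison-principle argument of Proposition \ref{pro:visc=pluripot} inside $\mathcal{E}(\Omega)$, you instead verify condition ii) via the Demailly-type maximum inequality for $u\in\mathcal{E}(\Omega)$ and $w$ locally bounded; this is an equally standard Cegrell-class fact (it can be derived from the bounded case applied to the canonical cutoffs $\max(\f,-j)$ together with the plurifine locality ${\bf 1}_{\{\f>-j\}}(dd^c\max(\f,-j))^n_{BT}={\bf 1}_{\{\f>-j\}}(dd^c\f)^n_{BT}$), and it buys a cleaner reuse of Theorem \ref{thm:visc=pluripot} as a black box. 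Two cosmetic slips, neither of which creates a gap: the sets $\{\f>\rho-c\}$ and $\{\f<\rho-c\}$ are not open ($\rho$ is merely a bounded psh function, and even for continuous $\rho$ the strict superlevel set of the usc function $\f-\rho$ need not be open), so the ``locality on open sets'' step should simply be deleted --- the maximum inequality you invoke already yields $(dd^c\f_c)^n_{BT}\ge v$ on all of $B'$ in one stroke; and absolute continuity of $v$ does not make the contact set $\{\f=\rho-c\}$ negligible (it may have positive Lebesgue measure), but again this is irrelevant once the maximum inequality is in hand, since that inequality attributes $(dd^c(\rho-c))^n_{BT}=v$ to the contact set anyway.
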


We do not want to recall the definition of the class ${\mathcal E}(\Omega)$. Suffices to say that when
$n=2$, a psh function $\f$ belongs to this class if and only if $\nabla \f \in L^2_{loc}$ \cite{Blo}.

\subsection{Viscosity subsolutions to $(dd^c \f)^n=e^{\e \f}v$}

Let $\e>0$ be a real number. Say that an u.s.c. function
$\f$ is a viscosity subsolution of $(dd^c \f)^n=e^{\e \f} v$
if $\f$ is not identically $-\infty$ and for all $x_0 \in M$,
for all  $q \in {\mathcal C}^2 (M)$ 
of $x_0$ such that $\f-q$ has a local maximum 
at $x_0$ and $\f(x_0)=q(x_0)$, one has
$(dd^c q(x_0))^n \geq e^{\e q(x_0)} v(x_0)$ .

\begin{prop} \label{pro:visc=pluripot+tw}
Let $\f: M \rightarrow \R$ be a bounded u.s.c. function.
It satisfies $(dd^c \f)^n \geq e^{\e \f} v$ in the viscosity sense
if and only if it (is plurisubharmonic and it)  does in the pluripotential sense.
\end{prop}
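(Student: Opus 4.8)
The statement is local, so I take $M=B$ and try to bootstrap everything from Proposition \ref{pro:visc=pluripot}, which already settles the equivalence for a \emph{fixed} measure with continuous density. The only new feature is the $\f$-dependent and merely upper semicontinuous density $e^{\e\f}v$, and the whole difficulty is concentrated there. First I would record that in both directions $\f$ is plurisubharmonic: given any ${\mathcal C}^2$ test function $q$ with $\f-q$ having a local maximum at $x_0$, one may translate it to $\tilde q=q+\f(x_0)-q(x_0)$ so that $\tilde q(x_0)=\f(x_0)$, and then the subsolution property yields $(dd^c q)^n_{x_0}=(dd^c\tilde q)^n_{x_0}\ge e^{\e\f(x_0)}v_{x_0}\ge 0$. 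Thus $\f$ is a viscosity subsolution of $(dd^c\f)^n\ge 0$ and Proposition \ref{viscpsh} applies, so $\f$ is bounded plurisubharmonic and $(dd^c\f)^n_{BT}$ is defined.

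The key mechanism is a monotone comparison with continuous frozen densities. For every continuous $g\le\f$ the measure $e^{\e g}v$ has continuous density, and at any contact point $g(x_0)\le\f(x_0)=q(x_0)$, hence $e^{\e g(x_0)}\le e^{\e\f(x_0)}$. Consequently $\f$ is a viscosity subsolution of $(dd^c\f)^n=e^{\e\f}v$ if and only if it is a viscosity subsolution of each fixed equation $(dd^c\f)^n=e^{\e g}v$ with $g$ continuous and $g\le\f$: the forward implication is immediate, the reverse one follows from $\sup\{g : g\text{ continuous}, g\le\f\}=\f_*$, the lower semicontinuous regularization of $\f$. Applying Proposition \ref{pro:visc=pluripot} to each such $g$ converts this into the family of pluripotential inequalities $(dd^c\f)^n_{BT}\ge e^{\e g}v$. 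Letting $g\uparrow\f_*$ and using monotone convergence against continuous test functions, I obtain $(dd^c\f)^n_{BT}\ge e^{\e\f_*}v$ in the forward direction, and $(dd^c q)^n_{x_0}\ge e^{\e\f_*(x_0)}v_{x_0}$ at every contact point in the reverse direction.

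It remains to replace $\f_*$ by $\f$, and this is where the upper semicontinuity of $e^{\e\f}$ becomes the main obstacle. For the forward direction the gap is harmless: a subharmonic function is continuous outside a Lebesgue-negligible set (its Riesz measure decomposes into an absolutely continuous part, whose potential is continuous, and a singular part carried by a Lebesgue-null set), so $\{\f\ne\f_*\}$ is negligible, $e^{\e\f}v=e^{\e\f_*}v$ as measures, and $(dd^c\f)^n_{BT}\ge e^{\e\f}v$ follows. The reverse direction is the genuinely hard part, because there the deficiency is pointwise: at a contact point $x_0$ where $\f$ is discontinuous, so that $\f(x_0)>\f_*(x_0)$, the frozen argument delivers only the weaker value $e^{\e\f_*(x_0)}$, and no continuous minorant of $\f$ reaches $\f(x_0)$ at $x_0$.

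I expect closing this pointwise gap to be the crux, and I would resolve it by a \emph{localized} version of the comparison principle of Lemma \ref{cpd}. Assuming $(dd^c q)^n_{x_0}< e^{\e\f(x_0)}v_{x_0}$, I would perturb $q$ to $q^\eta=q+\eta\|x-x_0\|^2$ and shift it down to produce a competitor $\bar q$ with $\bar q\ge\f$ near $\partial B'$ and $\bar q(x_0)<\f(x_0)$, for which $(dd^c\bar q)^n<e^{\e q^\eta}v$ on a small ball $B'$. The point is that the contact between $\bar q$ and $\f$ can only occur where $\f$ is close to $\f(x_0)$, hence where $e^{\e\f}$ is close to $e^{\e\f(x_0)}$, so that the inequality $(dd^c\bar q)^n\le(dd^c\f)^n_{BT}$ needed to run the comparison holds precisely on the region that matters, yielding $\bar q\ge\f$ and a contradiction at $x_0$. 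Carrying this through in full rigor is exactly the step that is cleanest via the sup-convolution technique of the next section; here I would present the direct localized comparison and leave the sharper tool for later.
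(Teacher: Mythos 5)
Your reduction to frozen continuous densities $g\le\f$ is sound as far as it goes, but the step you use to close the forward direction is false: a bounded plurisubharmonic function need \emph{not} agree with its lower semicontinuous regularization outside a Lebesgue-null set. Take $\{a_k\}$ a dense sequence in the unit ball $B$, constants $c_k>0$ with $\sum_k c_k$ finite and small, and set $\f=\max\bigl(\sum_k c_k\log(\|z-a_k\|/2),\,-1\bigr)$. This is a bounded psh function; since $\f(a_k)=-1$ for every $k$ and the $a_k$ are dense, one has $\f_*\equiv -1$, while $\f>-1$ on a set of positive (indeed nearly full) Lebesgue measure. So $\{\f\neq\f_*\}$ is not negligible, $e^{\e\f}v\neq e^{\e\f_*}v$ as measures, and your scheme only ever yields the strictly weaker inequality $(dd^c\f)^n_{BT}\ge e^{\e\f_*}v$. (The parenthetical Riesz-decomposition argument also fails: here the singular part $\sum_k c_k\delta_{a_k}$ is carried by a countable set, yet its potential is discontinuous on a set of full measure.) The obstruction is structural: any approximation of the usc function $\f$ \emph{from below} by continuous functions can only detect $\f_*$. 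This is exactly why the paper approximates from \emph{above}, by the sup-convolutions $\f^{\de}\downarrow\f$: by Ishii's theorem $\f^{\de}$ remains a viscosity subsolution for the continuous density $e^{\f^{\de}}f_{\de}\beta_n$, Proposition \ref{pro:visc=pluripot} applies to the continuous function $\f^{\de}$, and since $e^{\f^{\de}}\ge e^{\f}$ one gets $(dd^c\f^{\de})^n_{BT}\ge e^{\f}f_{\de}\beta_n$ and may pass to the decreasing limit by Bedford--Taylor continuity.

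Your sketch of the backward direction is close in spirit to the paper's proof of Proposition \ref{pro:visc=pluripot}, but it is not complete as stated: Lemma \ref{cpd} requires $(dd^c\bar q)^n\le(dd^c\f)^n_{BT}$ on the \emph{whole} ball, and off the contact region you have no lower bound on $e^{\e\f}$ (usc functions can drop abruptly near $x_0$). To localize you would need the refined comparison principle $\int_{\{u<w\}}(dd^c w)^n_{BT}\le\int_{\{u<w\}}(dd^c u)^n_{BT}$; even then you only conclude that $\{\bar q<\f\}$ is $v$-negligible --- this set is a superlevel set of an usc function, hence need not be open nor of positive measure despite containing $x_0$ --- so an additional sub-mean-value argument comparing $\f$ with $\bar q$ is required to reach the pointwise contradiction. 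Finally, your deferral of the rigor to ``the sup-convolution technique'' points at the wrong tool: sup-convolution is what handles the forward direction; the paper's backward direction instead mollifies $\f$ by ordinary convolution and uses Lemma \ref{convol} (concavity of Bedford--Taylor's operator $\Phi=\det^{1/n}$ via Goffman--Serrin) to show the mollification nearly preserves the pluripotential subsolution property pointwise, before running a perturbed test-function argument.
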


\begin{proof}
When $\f$ is continuous, so is the density of $\tilde{v}=e^{\e \f} v$
and Proposition \ref{pro:visc=pluripot} above can be applied. When $\f$ is not assumed to be continuous,
the issue is more subtle.

We can assume without loss of generality that $\e = 1$ and $M = \Omega$ is a domain in $\C^n$. 
Assume $\f$ is a viscosity subsolution. It follows from Proposition \ref{viscpsh} that $\f$ is psh.
Set $v = f \beta_n,$ where $f > 0$ is the continuous density of the volume form $v$ w.r.t. the euclidean volume form on $\C^n$.
We approximate $\f$ by its sup-convolution:
$$
\f^{\de}(x):=\sup_y \left\{ \f(y)-\frac{1}{2\de^2} |x-y|^2 \right\}, \ \ x \in \Omega_{\de},
$$
for $\de > 0$ small enough, where $\Omega_{\de} := \{ x \in \Omega ; dist (x,\partial \Omega) > A \de\}$ and $A > 0$ is a large constant so that $A^2 > 2 \text{osc}_{\Omega} \f$.
 
This family  of
 semi-convex functions decreases towards $\f$
as $\de$ decreases to zero. Furthermore, by \cite{Ish}, 
 $\f^{\de}$ 
satisfies the following inequality in the sense of viscosity on $\Omega_{\de}$
$$
(dd^c \f^{\de})^n \geq e^{\f^{\de}} f_{\de} \beta_n,
\text{ with } f_{\de}(x)=\inf \{f(y) \, / |y-x| \leq A \de \}.
$$
It follows from Proposition \ref{viscpsh} that $\f^{\de}$ is psh 
\footnote{This argument implies that a sup convolution of a psh function is psh. 
This in turn is easily deduced from the change of variables $y=x-y'$
in the definition of $\f^{\de}(x)$. }.
Since $\f^{\de}$ is {\it continuous}, we can invoke Proposition \ref{pro:visc=pluripot}
and get that
$$
(dd^c \f^{\de})^n \geq e^{\f^{\de}} f_{\de} \, \beta_n
\geq e^{\f} f_{\de} \, \beta_n
$$ 
holds in the pluripotential sense.
Since the complex Monge-Amp\`ere operator is continuous along decreasing sequences of bounded
psh functions, and since $f_{\de}$ increases towards $f$, we finally
obtain $(dd^c \f)^n \geq e^{\f} v$ in the pluripotential sense.

We now treat the other implication. 
Let $\f$ be a psh function satisfying the inequality
$$
(dd^c \f)^n \geq e^{\f} v,
$$
in the pluripotential sense on $\Omega$.
We want to prove that $\f$ satisfies the above differential inequality in the sense of viscosity on $\Omega$. If $\f$ were continuous then we could use Proposition 1.5. 
But since $\f$ is not necessarily continuous we first approximate $f$ using regularization by convolution $\f_{\de} := \f \star \chi_{\de}$ on $\Omega_{\de}$.
Lemma \ref{convol} below yields the following pointwise inequality in $\Omega_{\de}$:
\begin{equation}
(dd^c \f_{\de})^n \geq e^{\f_{\de}}  
f_{\de} \beta_n, \ \ \text{ with } 
f_{\de} (x) := \inf \{ f (y) ; \vert y - x\vert \leq \de \}.
\label{eq:conv-ineq}
\end{equation}

 Let $x_0 \in \Omega$, $q$ be a quadratic polynomial such that $\f (x_0) = q (x_0)$ and $\f \leq q$ on a neighbourhood of $x_0$, say on a ball $2 B$, where $B := B (x_0,r) \Subset \Omega$.
Since $\f$ is psh on $\Omega$, it satisfies $(dd^c \f)^n \geq 0$ in the viscosity sense on $\Omega$ by Proposition 1.5,
hence Lemma 1.4 yields $dd^c q (x_0) \geq 0$.  Replacing $q$ by $q (x) + \e \vert x- x_0\vert^2$ and taking $r > 0$ small enough, we can assume that $q$ is psh on the ball $2 B$.
We want to prove that $(dd^c q (x_0))^n \geq e^{\f (x_0)} f (x_0) \beta_n$.
Fix $\e > 0$ and set
 $$
  q^{\e} (x) := q (x) +  2 \e(\vert x - x_0\vert^2 - r^2) + \e r^2.
$$
Observe that since $\f \leq q$ on $2 B$, we obtain

- if $x \in \partial B,$  $\f_{\de} (x) - q^\e (x) = \f_{\de} (x) - q (x) - \e r^2$, 

- If $x = x_0$, $\f_{\de} (x_0) -  q^{\e} (x_0) = \f_{\de} (x_0) - q (x_0) + \e r^2 $. 

\noindent Now $\f_{\de} (x_0) - q (x_0) \to \f (x_0) - q (x_0) = 0$ as $\de \to 0$, so that for $\de$ small enough, the function
$\f_{\de} (x) - q^{\e} (x)$ takes it maximum on $\bar B$ at some interior point $x_{\de} \in B$ and this maximum satisfies the inequality
\begin{equation}
\lim_{\de \to 0} \max_{\bar B} (\f_{\de} - q^{\e}) = \lim_{\de \to 0} (\f_{\de} (x_{\de}) -  q^{\e} (x_{\de})) \geq  \e r^2.
\label{eq:max-minoration}
\end{equation}
We claim that  $x_{\de} \to x_0$. Indeed 
\begin{eqnarray*}
\f_{\de} (x_{\de}) -  q^{\e} (x_{\de}) &= & \f_{\de} (x_{\de}) - q (x_{\de}) - 2 \e (\vert x_{\de} - x_0\vert^2 - r^2) - \e r^2 \\
& = & o (1) - 2 \e \vert x_{\de} - x_0\vert^2 + \e r^2.
\end{eqnarray*}
 If $x'_0$ is a limit point of the family $(x_{\de})$ in $\bar B$, then $\max_{\bar B} (\f_{\de} - q^{\e})$ converges to  $- 2 \e \vert x'_{0} - x_0\vert^2 + \e r^2$. 
 By the inequality (\ref{eq:max-minoration}), this limit is $\geq \e r^2$. Therefore $- 2 \e \vert x'_{0} - x_0\vert^2  \geq 0$, hence $x'_0 = x_0$ as claimed.
 
From the above properties we conclude that $dd^c \f_{\de} (x_{\de}) \leq dd^c q^{\e} (x_{\de})$, hence by inequality $(\ref{eq:conv-ineq})$ for $\de > 0$ small enough we get
$$
 (dd^c q^{\e} (x_{\de}))^n \geq  e^{\f_{\de} (x_{\de})}   f_{\de} \beta_n =  e^{\f_{\de} (x_{\de}) - q^{\e} (x_{\de})} e^{q^{\e} (x_{\de})}  f_{\de} (x_{\de}) \beta_n
$$
Now observe that  $\f_{\de} - q^{\e} = (\f_{\de} - q) + (q - q^{\e})$ and by Dini's lemma $\limsup_{\de \to 0} \max_{\bar B} (\f_{\de} - q) = \max_{\bar B} (\f - q) = 0$. Therefore
 $$
 \overline{\lim}_{\de \to 0} (\f_{\de} (x_{\de}) - q^{\e} (x_{\de})) \geq \underline{\lim}_{\de \to 0} \min_{\bar B}(q - q^{\e}) = 
 \min_{\bar B}(- 2 \e \vert x- x_0\vert^2 + \e r^2)
 = - \e r^2.
 $$
Since $q^{\e}$ converges in $C^2-$norm to the function $q$ we infer
$$
(dd^c  q^{\e} (x_{0}))^n \geq  e^{q (x_0) - 2 \e r^2} f (x_0) \beta_n.
$$
In the same way, as $\e \to 0$ we obtain  the required inequality $(dd^c q (x_0))^n \geq e^{\f (x_0)} f (x_0) \beta_n$, since $q (x_0) = \f (x_0)$.
\end{proof}

\begin{lem}\label{convol}
Let $u$ be a bounded plurisubharmonic function in a domain $\Omega \subset \C^n$ and $v = f \beta_n$ a continuous volume form with continuous density $f \geq 0$.
Assume that $\f$ satisfies 
$$
(dd^c \f)_{BT}^n \geq  e ^{\f} f \beta_n, \leqno (\star)
$$
in the  pluripotentiel sense in $\Omega$. 
Then  for $\de > 0$ small enough, the usual regularization by convolution $\f_{\de} := \f \star \chi_{\de}$ satisfies 
$$
(dd^c \f_{\de})_{BT}^n \geq e^{\f_{\de}} f_{\de} \beta_n, \ \ \text{with} \ f_{\de} (x) := \inf \{ \vert f (y)\vert ; \vert y - x \vert \leq \de,
$$
pointwise in $\Omega_{\de}$.
\end{lem}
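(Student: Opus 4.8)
The plan is to reduce the pointwise inequality for the smooth, plurisubharmonic function $\f_\de=\f\star\chi_\de$ to a family of \emph{linear} inequalities, exactly as in the proof of Proposition \ref{pro:visc=pluripot}. Since $\f_\de$ is smooth and psh, $dd^c\f_\de$ is a smooth semipositive $(1,1)$-form and $(dd^c\f_\de)^n=\det(dd^c\f_\de)\,\beta_n$ pointwise. By Gaveau's lemma quoted above it therefore suffices to prove, for every positive definite Hermitian matrix $H\in H_n^+$ with $\det H=n^{-n}$, the pointwise bound $\Delta_H\f_\de(x):=\mathrm{tr}(H\,dd^c\f_\de)(x)\ge e^{\f_\de(x)/n}f_\de(x)^{1/n}$; taking the infimum over such $H$ then yields $\det(dd^c\f_\de)^{1/n}\ge e^{\f_\de/n}f_\de^{1/n}$, which is the desired inequality to the power $1/n$.

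The key step is the measure inequality
$$\Delta_H\f\ \ge\ (e^{\f}f)^{1/n}\,\beta_n,$$
valid for every such $H$, where $\Delta_H\f=\mathrm{tr}(H\,dd^c\f)$ is the positive Radon measure attached to the positive current $dd^c\f$. Given any \emph{continuous} function $g$ with $0\le g\le e^{\f}f$, the hypothesis gives $(dd^c\f)^n_{BT}\ge g\,\beta_n$ in the pluripotential sense, so Proposition \ref{pro:visc=pluripot} shows that $\f$ is a viscosity subsolution of $(dd^c\f)^n\ge g\,\beta_n$; exactly as in the proof of that proposition (Gaveau's lemma together with \cite{Hor}, prop 3.2.10'), this forces $\Delta_H\f\ge g^{1/n}\,\beta_n$ as measures, hence a lower bound $g^{1/n}$ almost everywhere on the density of the absolutely continuous part of $\Delta_H\f$. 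Letting $g$ range over the continuous minorants of the upper semicontinuous function $e^{\f}f$, whose supremum is its lower semicontinuous envelope $e^{\f_*}f$ (with $\f_*$ the lower regularization of $\f$ and $f$ continuous), this density is $\ge e^{\f_*/n}f^{1/n}$ a.e. Since a bounded plurisubharmonic function coincides almost everywhere with its lower semicontinuous regularization---its discontinuity set has Lebesgue measure zero---we may replace $\f_*$ by $\f$ and obtain the displayed measure inequality.

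It remains to convolve. As $\Delta_H$ is a constant coefficient linear operator it commutes with convolution, so $\Delta_H\f_\de(x)=\int\chi_\de(x-w)\,d(\Delta_H\f)(w)$. Discarding the nonnegative singular part of $\Delta_H\f$ and inserting the measure inequality gives $\Delta_H\f_\de(x)\ge\int (e^{\f(x-y)}f(x-y))^{1/n}\chi_\de(y)\,dV(y)$. Now $f(x-y)\ge f_\de(x)$ for $\vert y\vert\le\de$, and Jensen's inequality applied to the convex function $t\mapsto e^{t/n}$ against the probability measure $\chi_\de(y)\,dV(y)$ yields $\int e^{\f(x-y)/n}\chi_\de(y)\,dV(y)\ge e^{\f_\de(x)/n}$. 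Combining, $\Delta_H\f_\de(x)\ge f_\de(x)^{1/n}e^{\f_\de(x)/n}$, and taking the infimum over $H$ via Gaveau's lemma gives the claimed inequality.

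The main obstacle is the middle step: transferring the \emph{pluripotential} Monge-Amp\`ere bound on the merely bounded, possibly discontinuous, function $\f$ into a \emph{pointwise} (almost everywhere) lower bound on the linearized operators $\Delta_H\f$. The difficulty is genuinely the lack of continuity of the density $e^{\f}f$, which is only upper semicontinuous; this is what forces the detour through continuous minorants together with the almost everywhere coincidence of a bounded plurisubharmonic function with its lower regularization. Once $\f$ is known to be continuous, as in the opening lines of the proof of Proposition \ref{pro:visc=pluripot+tw}, this step is immediate from Proposition \ref{pro:visc=pluripot}, and the remaining convolution-and-Jensen argument is routine.
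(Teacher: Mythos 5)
Your opening reduction (Gaveau's lemma applied to the smooth psh function $\f_\de$) and your closing convolution-plus-Jensen step are both correct, and the latter is exactly how the paper concludes. The genuine gap is in the middle step, and it is fatal as written: the assertion that a bounded plurisubharmonic function coincides almost everywhere with its lower semicontinuous regularization $\f_*$ (equivalently, that its discontinuity set is Lebesgue-null) is \emph{false}. A bounded psh function can be discontinuous on a set of positive Lebesgue measure: take $(a_k)$ dense in the unit disc and $u=\sum_k c_k\max\left(\log|z-a_k|,-c_k^{-1}\right)$, where each summand has depth $-1$ at $a_k$; choosing $c_k>0$ summable and decreasing fast enough, one gets $u\le -1+O\bigl(\sum_k c_k\bigr)$ on the dense set $\{a_k\}$, while $u\ge -\eta$ outside a union of discs of arbitrarily small total area. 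Truncating, $w=\max(u,-2)$ is bounded psh with $w_*\le -1/2 < -\eta\le w$ on a set of positive measure. (This is precisely why Bedford--Taylor quasicontinuity is formulated in terms of capacity and restrictions, not almost-everywhere continuity.) Since the supremum of the continuous minorants of the usc function $e^{\f}f$ is only $e^{\f_*}f$, your detour through Proposition \ref{pro:visc=pluripot} can never yield more than $\Delta_H\f\ge (e^{\f_*}f)^{1/n}\beta_n$, hence only $(dd^c\f_\de)^n\ge e^{(\f_*\star\chi_\de)}f_\de\,\beta_n$, which is strictly weaker than the lemma wherever $\{\f_*<\f\}$ has positive measure. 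This loss is not hypothetical within the hypotheses of Lemma \ref{convol}: $\f=w+|z|^2$ satisfies $(dd^c\f)^n_{BT}\ge (dd^c|z|^2)^n$ by superadditivity, so $(\star)$ holds with $f$ a small positive constant, while $\{\f_*<\f\}$ still has positive measure.

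The inequality you need, with $\f$ itself rather than $\f_*$, is true, but it cannot be reached through continuous minorants: that route is structurally blind to the values of $\f$ on $\{\f_*<\f\}$. This is where the paper uses a different tool. It takes the Lebesgue decomposition of the current $dd^c\f$, with absolutely continuous part given by a matrix $(\f_{j\bar k})$ of $L^1_{loc}$ densities, and works with the Bedford--Taylor/Goffman--Serrin operator $\Phi(\f)=\bigl(\det(\f_{j\bar k})\bigr)^{1/n}$; from $(\star)$ one infers $\Phi(\f)\ge \bigl(e^{\f}f\bigr)^{1/n}$ almost everywhere, a purely measure-theoretic inequality in which no semicontinuity of the density $e^{\f}f$ plays any role. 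Concavity and positive homogeneity of $A\mapsto(\det A)^{1/n}$ on nonnegative hermitian matrices --- i.e.\ your Gaveau infimum, but applied pointwise a.e.\ to the matrix of densities rather than to the measures $\Delta_H\f$ --- then give $\Phi(\f\star\chi_\de)\ge \Phi(\f)\star\chi_\de$, and the convexity of the exponential finishes exactly as in your last paragraph. So your first and last steps survive verbatim; the missing ingredient is the passage to the absolutely continuous part of $dd^c\f$ and of its Monge--Amp\`ere measure, which is what replaces your continuous-minorant argument and cannot be simulated by it.
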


\begin{proof}
This follows at least formally from the concavity of 
the function $A \longmapsto (\text {det} A)^{1 \slash n}$ 
on the convex cone of non negative hermitian matrices.
 However here $A$ will be a non negative hermitian matrix with 
Radon measure coefficients. In this context, Bedford and Taylor \cite{BT1} 
defined an operator $\Phi$ on all plurisuharmonic 
functions on $\Omega \subset \C^n$ extending
 the $n^{\text{th}}$ root of the determinant of the Levi 
form for smooth psh functions $u$. 
Namely let $u \in PSH (\Omega)$ and 
let  $dd^c u = \sum_{i,j} ( u_{j {\bar k}} + 
\sigma_{j \bar k}) d z_i \wedge d \bar z_j$ 
be the Lebesgue decomposition of the positive current $dd^c u$ on $\Omega$, 
where $(u_{j {\bar k}})$ is a hermitian 
positive matrix whose coefficients are 
$L^1$-loc functions on $\Omega$ and $\sigma_{j \bar k}$ are singular
 measures on $\Omega$. Using a general construction
 of Goffman and Serrin, they show that 
the following definition makes sense 
$$
\Phi (u) := \left(\text{det} (u_{j \bar k})\right)^{1 \slash n},
$$
as an absolutely continuous measure with 
$L^1_{loc}$ density with respect 
to the Lebesgue measure on $\Omega$.

It is easy to see that $\Phi$ is concave and positively homogenous.
This implies
$$
 \Phi (u \star \chi_{\de}) \geq \Phi (u) \star \chi_{\de}.
$$
in $\Omega_{\de}$.
We infer from $(\star)$ that
$$
 \Phi (u \star \chi_{\de}) \geq g_{\de} \beta_n,
$$
where $g := f^{1\slash n} \exp (\f\slash n)$ and $g_{\de} := g \star \chi_{\de}$ on $\Omega_{\de}$. The convexity of the exponential function now yields 
\begin{eqnarray*}
g_{\de} (x) & \geq & \inf_{\vert y - x\vert \leq \de} f^{1 \slash n} (y) \int \exp (u (x - y)\slash n) \chi_{\de} (y) d y \\ 
& \geq &   f_{\de}^{1 \slash n} (x) \exp ( u_{\de} (x) \slash n),
\end{eqnarray*}
 which implies that $(dd^c u \star \chi_{\de})^n \geq f_{\de} \exp(u_{\de})$, as claimed.
 \end{proof}

\section{The viscosity comparison principle for $(\omega+dd^c \f)^n=e^{\e\f} v$}

 We now set the basic frame for the viscosity approach to the equation
 $$
  (\omega+dd^c \f)^n =e^{\e \f} v,
 \leqno{(DMA^{\e}_v)}
 $$
 where $\omega$ is a closed smooth real $(1,1)$-form on a $n$-dimensional connected  
complex manifold $X$,
$v$ is a volume form with nonnegative continuous density and $\e \in \R_{+}$. 
Here the emphasis is on global properties.
 
 \smallskip

The global comparison principle lies at the heart of the viscosity approach. 
Once it is established, Perron's method
can be applied to produce viscosity solutions. Our main goal in this section is to establish the global comparison
principle for $(DMA^{\e}_v)$. 
We generally assume $X$ is compact (and $\e>0$): the structure of 
$(DMA_v^{\e})$ allows us to avoid any restrictive curvature 
assumption on $X$ (unlike e.g. in \cite{AFS}).

 \subsection{Definitions for the compact case}

 To fit in with the viscosity point of view, we rewrite the Monge-Amp\`ere equation as
$$
e^{\e \f}v-(\omega+ dd^c \f)^n=0 \leqno{(DMA_v^{\e})}
$$

Let $x\in X$.
If $\kappa \in \Lambda^{1,1} T_{x} X$ we define $\kappa_+^n$ to be $\kappa^n$ if $\kappa \ge 0$ and $0$ otherwise. 
For a technical reason, we will also consider a slight variant of $(DMA_v^{\e})$,

$$
e^{\e \f}v-(\omega+ dd^c \f)_+^n=0 \leqno{(DMA_v^{\e})_+}
$$

We let $PSH(X,\omega)$ denote the set of all $\omega$-plurisubharmonic ($\omega$-psh for short)
 functions on $X$: these are integrable  upper semi-continuous functions
$\f:X \rightarrow \R \cup \{-\infty\}$ such that $dd^c \f \geq -\omega$
in the sense of currents.

\smallskip

\begin{lem} \label{hyp}
Let $\Omega \subset X$ be an open subset and $z:\Omega\to \C^n$ be a holomorphic coordinate chart. 
Let $h$ be a smooth local potential for $\omega$ defined on $\Omega$.
Then $(DMA^{\e}_v)$ reduces in these $z$-coordinates to the scalar equation
$$
 e^{\e u} W - \det(u_{z\bar z})=0 \leqno{(DMA_{v|z}^{\e})}
$$
where $u=(\f+h)|_{\Omega} \circ z^{-1}$, $z_*v=e^{\e h_{| \Omega} \circ z^{-1}}W d\lambda$ and $\lambda$ is the Lebesgue measure on $z(\Omega)$. 

On the other hand, $(DMA^{\e}_v)_+$ reduces  to the scalar equation:
$$
 e^{\e u} W - \det(u_{z\bar z})_+=0 \leqno{(DMA_{v|z}^{\e})_+}
$$
\end{lem}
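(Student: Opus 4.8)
The statement is a bare change-of-coordinates computation, so the plan is simply to unwind both sides of $(DMA^{\e}_v)$ in the chart $z$ using the defining property of the local potential. The starting point is that $h$ being a local potential for $\omega$ on $\Omega$ means exactly $\omega = dd^c h$ there; hence, as currents on $\Omega$,
$$
\omega + dd^c \f = dd^c h + dd^c \f = dd^c(h + \f),
$$
so the operator $(\omega + dd^c\f)^n$ is nothing but the Monge-Amp\`ere mass of the single function $h+\f$.

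First I would transport this to $z(\Omega)$. Setting $u := (\f+h)|_{\Omega}\circ z^{-1}$, the current $\omega + dd^c\f$ corresponds under $z$ to $dd^c u$, whose top power is, with the normalization of $d^c$ in force (the same one as in Lemma \ref{convol}), the absolutely continuous measure $(dd^c u)^n = \det(u_{z\bar z})\, d\lambda$, any numerical constant being folded into $\lambda$. Next I would treat the right-hand side: inverting the definition of $u$ gives $\f = u\circ z - h$ on $\Omega$, hence $e^{\e\f} = e^{\e\,(u\circ z)}\, e^{-\e h}$. Pushing $e^{\e\f}v$ forward and using $z_*(g\,\mu)=(g\circ z^{-1})\,z_*\mu$, I get
$$
z_*\!\left(e^{\e\f}v\right) = e^{\e u}\, z_*\!\left(e^{-\e h}v\right) = e^{\e u}\, e^{-\e\, h\circ z^{-1}}\, z_*v = e^{\e u}\, W\, d\lambda,
$$
the last step being precisely the definition $z_*v = e^{\e\, h_{|\Omega}\circ z^{-1}}W\,d\lambda$. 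Equating the two push-forwards yields $\det(u_{z\bar z})\, d\lambda = e^{\e u}W\, d\lambda$, which is $(DMA_{v|z}^{\e})$; and since $v$ has nonnegative continuous density and $h$ is smooth, $W$ is automatically nonnegative and continuous.

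For the truncated equation the argument is identical on the set where $\omega+dd^c\f\ge 0$, i.e. where the Hessian $(u_{z\bar z})$ is semipositive: there $\kappa^n_+ = \kappa^n$ matches $\det(u_{z\bar z})_+ = \det(u_{z\bar z})$. Off that set both $(\omega+dd^c\f)_+^n$ and the symbol $\det(u_{z\bar z})_+$ vanish by definition, so the two descriptions coincide pointwise and $(DMA^{\e}_v)_+$ reduces to $(DMA_{v|z}^{\e})_+$.

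There is no analytic difficulty here; the plan is entirely bookkeeping. The only points that need care are pinning down the normalization constant relating $(dd^c u)^n$ to $\det(u_{z\bar z})\, d\lambda$ (which is why it is cleanest to absorb it into $\lambda$) and tracking the factor $e^{-\e h}$ through the push-forward so that it cancels against the $e^{\e h}$ built into the definition of $W$.
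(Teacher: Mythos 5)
Your computation is correct and is exactly the bookkeeping the paper has in mind — the paper itself omits the argument entirely, stating only that ``the proof is straightforward.'' The one cosmetic point is that the dimensional constant relating $(dd^c u)^n$ to $\det(u_{z\bar z})\,d\lambda$ is more naturally absorbed into $W$ (which is only pinned down relative to the chosen normalization of $d^c$) than into the Lebesgue measure $\lambda$, but this does not affect the substance.
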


The proof is straightforward. 
Note in particular that conditions (1.2) p. 27 (ie.: degenerate ellipticity)  (2.11) p. 32 (properness), (2.18) p. 34
in \cite{IL} are satisfied by $(DMA_{v|z}^{\e})_+$. If $v>0$ and $\e>0$,  (2.17) p.33 is also satisfied,
so that we can apply the tools exposed in \cite{IL,CIL}.

\smallskip

\subsubsection{Subsolutions}

If $\f^{(2)}_x$ is the $2$-jet at $x\in X$ of a ${\mathcal C}^2$ real valued function $\f$
we set
$$
F_+(\f^{(2)}_x)=F^{\e}_{+,v}(\f_x)=e^{\e \f(x)}v_x-(\omega_x+ dd^c \f_x)_+^n .
$$
Recall the following definition from \cite{IL}. 

\begin{defi}
A subsolution of $(DMA^{\e}_v)_+$ is an upper semi-continuous function $\f: X\to \R \cup \{-\infty\}$ such that
$\f\not \equiv -\infty$
and the following property is satisfied:
if $x_0\in X$ and  $q \in {\mathcal C}^2$, defined in a neighborhood of $x_0$, is such that $\f(x_0)=q(x_0)$ and
$$
 \f-q \  \text{ has a local maximum at} \ x_0,
 $$ 
 then $F_+(q^{(2)}_{x_0})\le 0$. 
\end{defi}

Actually, this concept of a subsolution seems to be a bit too weak. 
It is not the same concept for $\epsilon=0$ as in section \ref{sec:local}
and does not behave well if $v=0$ since any usc function is then a viscosity subsolution of 
$(dd^c\f)^n_+=0$. It behaves well however if $v>0$. 
We introduce it nevertheless in order to be able to use the reference \cite{IL}.

We now introduce what we believe to be the right definition, which leads to a slightly stronger statement. 
If $\f^{(2)}_x$ is the $2$-jet at $x\in X$ of a ${\mathcal C}^2$ real valued function $\f$
we set
$$
F(\f^{(2)}_x)=F^{\e}_{v}(\f_x)=
\left\{
\begin{array}{ll}
e^{\e \f(x)} v_x-(\omega_x+ dd^c \f_x)^n & \text{ if } \omega+dd^c\f_x\ge 0 \\
+\infty & \text{otherwise}.
\end{array}
\right.
 $$
Recall the following definition from \cite{CIL}:

\begin{defi}
 A subsolution of $(DMA^{\e}_v)$ is an upper semi-continuous function $\f: X\to \R \cup \{-\infty\}$ such that
$\f\not \equiv -\infty$
and the following property is satisfied:
if $x_0\in X$ and  $q \in {\mathcal C}^2$, defined in a neighborhood of $x_0$, is such that $\f(x_0)=q(x_0)$ and
$$
 \f-q \  \text{ has a local maximum at} \ x_0,
 $$ 
 then $F(q^{(2)}_{x_0})\le 0$. 
 \end{defi}

\begin{rem}
The function $F^{\e}_{v}$ is lower semicontinuous and satisfies conditions (0.1) and (0.2)
in \cite{CIL}.
\end{rem}

Note that it is easy to compare subsolutions of $(DMA^{\e}_v)$ and $(DMA^{\e}_v)_+$:

\begin{lem}
Every subsolution $\f$ of $(DMA^{\e}_v)$ is  
a subsolution of $(DMA^{\e}_v)_+$, 
it is $\omega$-plurisubharmonic.

A  locally bounded usc function is $\omega$-psh and satisfies
  $(\omega+dd^c\f)^n_{BT} \ge e^{\epsilon \f} v$ iff it is a (viscosity) subsolution of $(DMA^{\e}_v)$ .

If $v>0$ subsolutions  of $(DMA^{\e}_v)_+$ are subsolutions of $(DMA^{\e}_v)$. 
\end{lem}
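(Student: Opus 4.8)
The plan is to verify the three assertions in turn, using the results already established in Section \ref{sec:local}. The first assertion has two parts: that every subsolution of $(DMA^{\e}_v)$ is a subsolution of $(DMA^{\e}_v)_+$, and that it is $\omega$-plurisubharmonic. The first part is immediate from the definitions of $F$ and $F_+$: whenever $\f-q$ has a local maximum at $x_0$ with $\f(x_0)=q(x_0)$, the subsolution condition $F(q^{(2)}_{x_0})\le 0$ forces $\omega_{x_0}+dd^c q_{x_0}\ge 0$ (since $F=+\infty$ otherwise), and on this set $F$ and $F_+$ agree, so $F_+(q^{(2)}_{x_0})\le 0$ as well. For the plurisubharmonicity, I would work in a local chart as in Lemma \ref{hyp}, writing $u=(\f+h)\circ z^{-1}$. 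The subsolution condition forces $\omega_{x_0}+dd^c q_{x_0}\ge 0$ at every point where a $\mathcal C^2$ test function touches $\f$ from above, which translates (after subtracting the smooth potential $h$) into the statement that $u$ is a viscosity subsolution of $(dd^c u)^n\ge 0$. By Proposition \ref{viscpsh} this means $u$ is plurisubharmonic, hence $\f$ is $\omega$-psh.

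For the second assertion, I would again pass to local coordinates and reduce to the twisted local equation. A locally bounded usc $\omega$-psh function $\f$ with $(\omega+dd^c\f)^n_{BT}\ge e^{\e\f}v$ corresponds, in the chart, to a bounded psh function $u$ satisfying $(dd^c u)^n_{BT}\ge e^{\e u}\tilde W\,d\lambda$ in the pluripotential sense, where the density is continuous. This is precisely the hypothesis of Proposition \ref{pro:visc=pluripot+tw} (in the case $\f$ bounded, after absorbing the potential), which gives equivalence between the pluripotential inequality and the viscosity inequality $(dd^c u)^n\ge e^{\e u}\tilde W$ in the sense of the twisted equation. Unwinding the coordinate change of Lemma \ref{hyp} then identifies this viscosity inequality with the condition $F(q^{(2)}_{x_0})\le 0$ defining a subsolution of $(DMA^{\e}_v)$. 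The one point requiring care is that Proposition \ref{pro:visc=pluripot+tw} is stated for the twisted density $e^{\e\f}v$ with continuous $v$; since $\f$ is only assumed bounded and usc (not continuous), I must invoke the full strength of that proposition, whose proof already handles the non-continuous case via sup- and inf-convolution.

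For the third assertion, assume $v>0$ and let $\f$ be a subsolution of $(DMA^{\e}_v)_+$. I must show $F(q^{(2)}_{x_0})\le 0$ whenever $\f-q$ has a local maximum at $x_0$ with equality of values. Since $\f$ satisfies $F_+(q^{(2)}_{x_0})\le 0$, i.e. $e^{\e\f(x_0)}v_{x_0}\le(\omega_{x_0}+dd^cq_{x_0})^n_+$, and the left-hand side is strictly positive because $v>0$, the right-hand side must be strictly positive, which forces $\omega_{x_0}+dd^c q_{x_0}>0$; in particular the Hermitian form is positive, so the truncation is inactive and $(\omega_{x_0}+dd^c q_{x_0})^n_+=(\omega_{x_0}+dd^c q_{x_0})^n$. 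On the region where $\omega+dd^c q_{x_0}\ge 0$ the functionals $F$ and $F_+$ coincide, whence $F(q^{(2)}_{x_0})\le 0$. The main subtlety here, and the step I expect to be the true obstacle, is ensuring that $v>0$ genuinely excludes the degenerate case $\omega_{x_0}+dd^c q_{x_0}\ge 0$ with vanishing determinant: one must argue that positivity of $e^{\e\f(x_0)}v_{x_0}$ combined with $F_+\le 0$ rules out a semipositive-but-singular jet, for which I would appeal to Lemma \ref{lem:semipositive} to conclude that a semipositive Hermitian matrix with positive determinant is in fact positive definite, closing the argument.
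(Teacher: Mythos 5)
Your proposal is correct and follows essentially the same route as the paper, which disposes of the lemma by localizing via a potential for $\omega$ and invoking the definitions of $F$ and $F_+$ together with Proposition \ref{viscpsh}, Theorem \ref{thm:visc=pluripot} and Proposition \ref{pro:visc=pluripot+tw}. The only superfluous step is your final appeal to Lemma \ref{lem:semipositive}: for the third assertion one only needs $(\omega_{x_0}+dd^c q_{x_0})^n_+>0$ to force $\omega_{x_0}+dd^c q_{x_0}\ge 0$ (by the very definition of $\kappa^n_+$), on which set $F$ and $F_+$ already coincide, so strict positive definiteness of the jet is not required.
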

 
\begin{proof} Immediate consequence 
of the definitions, Theorem \ref{thm:visc=pluripot} and Proposition 
\ref{pro:visc=pluripot+tw}. One just has to choose a local potential $\rho$ such that
$dd^c\rho=\omega$ and set $\f'=\f+\rho$, $v'= e^{-\e \rho} v$ to apply the local results
of section \ref{sec:local}. 

\end{proof}

Actually, the discussion after Theorem \ref{thm:visc=pluripot}
fits well in the theory developped in \cite{BEGZ} and we get the:

\begin{coro}
Let $X$ be a compact K\"ahler manifold and $\omega$ a smooth closed  real $(1,1)$ form 
whose cohomology class $[\omega]$ is big. Let $\f$ be any $\omega$-plurisubharmonic function.
Then
$\f$ satisfies $(\omega+dd^c \f)^n \ge e^{\e \f} v$ in
 the vicosity sense iff $\langle (\omega+dd^c \f)^n \rangle \ge e^{\e \f} v$,
where $\langle (\omega+dd^c \f)^n \rangle$ is 
the non-pluripolar (pluripotential)  Monge-Amp\`ere measure of $\f$.
\end{coro}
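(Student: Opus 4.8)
The plan is to reduce to the purely local setting of Section~\ref{sec:local} and then to transfer the bounded theory across truncations, the non-pluripolar product playing the role that the Bedford--Taylor product plays there. Both sides of the claimed equivalence are local notions: the viscosity condition manifestly so, and the non-pluripolar Monge-Amp\`ere product because it is local in the plurifine topology by \cite{BEGZ}. Fixing a local potential $\rho$ with $dd^c\rho=\omega$ on a coordinate ball and setting $\f'=\f+\rho$ and $\tilde v=e^{-\e\rho}v$, I would restate the problem as: the plurisubharmonic function $\f'$ is a viscosity subsolution of $(dd^c\f')^n=e^{\e\f'}\tilde v$ if and only if $\langle(dd^c\f')^n\rangle\ge e^{\e\f'}\tilde v$. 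The structural input I would borrow from \cite{BEGZ} is that, writing $\f'_j=\max(\f',-j)$, one has the increasing limit
$$
\langle(dd^c\f')^n\rangle=\lim_{j\to\infty}\mathbf 1_{\{\f'>-j\}}\,(dd^c\f'_j)^n_{BT},
$$
that $(dd^c\f'_j)^n_{BT}$ coincides with $\langle(dd^c\f')^n\rangle$ on the plurifine-open set $\{\f'>-j\}$, and that neither measure charges the pluripolar set $\{\f'=-\infty\}$.

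For the direction ``viscosity $\Rightarrow$ non-pluripolar'' I would first note that any $\mathcal C^2$ test function touching $\f'_j$ from above at a point of $\{\f'>-j\}$ also touches $\f'$ from above there, since $\f'_j=\f'$ on that set and $\f'_j\ge\f'$ elsewhere; hence $\f'_j$ inherits the subsolution inequality at every point of $\{\f'>-j\}$. On the Euclidean interior of this set $\f'_j$ is a genuine bounded subsolution, so Proposition~\ref{pro:visc=pluripot+tw} yields $(dd^c\f'_j)^n_{BT}\ge e^{\e\f'_j}\tilde v$ there. Propagating this inequality over the plurifine-open set by locality, multiplying by $\mathbf 1_{\{\f'>-j\}}$ and letting $j\to\infty$, the left-hand side increases to $\langle(dd^c\f')^n\rangle$ while the right-hand side increases to $e^{\e\f'}\tilde v$ by monotone convergence, $\{\f'=-\infty\}$ being Lebesgue-negligible; this gives $\langle(dd^c\f')^n\rangle\ge e^{\e\f'}\tilde v$.

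For the converse I would rerun the comparison-principle argument of Proposition~\ref{pro:visc=pluripot}, truncating from below to restore boundedness. Given $q\in\mathcal C^2$ touching $\f'$ from above at $x_0$ with $\f'(x_0)=q(x_0)$, I may assume $q$ is strictly psh near $x_0$ (adding $\e\|x-x_0\|^2$ and letting $\e\to0$ at the end, using that $\f'$ psh forces $dd^cq_{x_0}\ge0$ by Proposition~\ref{viscpsh} and Lemma~\ref{lem:semipositive}). If the pointwise inequality failed at $x_0$, I would produce, exactly as in Proposition~\ref{pro:visc=pluripot}, a small ball $B'$ and a competitor $\bar q^{\e}$ with $\bar q^{\e}\ge\f'$ near $\partial B'$, with $\bar q^{\e}(x_0)<\f'(x_0)$, and with $(dd^c\bar q^{\e})^n_{BT}\le e^{\e\bar q^{\e}}\tilde v$ on $B'$. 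Replacing $\f'$ by the bounded psh function $u=\max(\f',\bar q^{\e})$ and using plurifine locality to check $(dd^c\bar q^{\e})^n_{BT}\le(dd^c u)^n_{BT}$ on $B'$ (where $\f'>\bar q^{\e}$ this reduces to $e^{\e\bar q^{\e}}\tilde v\le e^{\e\f'}\tilde v\le\langle(dd^c\f')^n\rangle$, and where $\f'<\bar q^{\e}$ the two sides agree), the pluripotential comparison principle of Lemma~\ref{cpd} would give $\bar q^{\e}\ge u\ge\f'$ on $B'$, contradicting $\bar q^{\e}(x_0)<\f'(x_0)$.

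The main obstacle in both directions is identical and stems entirely from $\f'$ being only upper semicontinuous: the level sets $\{\f'>-j\}$ and $\{\f'>\bar q^{\e}\}$ are only plurifine-open, not Euclidean-open, so the bounded local theory of Section~\ref{sec:local} cannot be applied directly on a Euclidean neighbourhood of a point where $\f'$ oscillates. Making the passage rigorous — identifying the Bedford--Taylor product of the truncation with the non-pluripolar product on these plurifine-open sets and verifying the Monge-Amp\`ere inequalities for the truncations up to a pluripolar set that the measures ignore — is exactly where the plurifine locality and increasing-limit properties of the non-pluripolar product established in \cite{BEGZ} do the essential work.
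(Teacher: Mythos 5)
The paper offers no written proof of this corollary: it is presented as an immediate consequence of Theorem \ref{thm:visc=pluripot} (and the discussion following it) combined with the plurifine locality and monotone-limit properties of the non-pluripolar product from \cite{BEGZ}. Your overall strategy -- localize, truncate, apply the bounded equivalence, pass to the limit in the definition of $\langle(dd^c\f')^n\rangle$ -- is the intended one, and your converse direction is essentially sound (modulo the standard care needed on the contact set $\{\f'=\bar q^{\e}\}$, handled e.g.\ by perturbing by constants as in Section 4.3). But the forward direction as you wrote it has a genuine gap. You truncate by $\f'_j=\max(\f',-j)$ and correctly observe that $\f'_j$ inherits the viscosity subsolution inequality at every point of $\{\f'>-j\}$. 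This set, however, is only plurifine open, and its \emph{Euclidean} interior can be empty: take $\f'$ with a dense polar set, e.g.\ $\sum_k 2^{-k}\log|z-a_k|$ with $(a_k)$ dense, added to $\f'$. Proposition \ref{pro:visc=pluripot+tw} and the sup-convolution machinery behind it require the viscosity inequality on a Euclidean open set; "propagating by plurifine locality" from the (possibly empty) Euclidean interior to all of $\{\f'>-j\}$ is not an argument -- plurifine locality of the Bedford--Taylor operator tells you the measure of $\f'_j$ on $\{\f'>-j\}$ depends only on $\f'_j$ restricted there, but it does not convert a pointwise viscosity statement on that set into a pluripotential inequality. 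Note also that the constant $-j$ is \emph{not} a subsolution of $(dd^c u)^n=e^{\e u}\tilde v$ when $\tilde v>0$, so $\f'_j$ genuinely fails the subsolution property on the open set $\{\f'<-j\}$; the difficulty is not cosmetic.

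The fix is exactly the device of Theorem \ref{thm:visc=pluripot}: truncate from below by a function that is \emph{itself} a viscosity subsolution on a full coordinate ball. Writing $e^{\e\f'}\tilde v\le\tilde v'$ locally with $\tilde v'=(dd^c\rho)^n_{BT}$ for a bounded psh $\rho$ (Kolodziej), and normalizing so that $\rho-c\le 0$, the function $\rho-c$ is a bounded subsolution of $(dd^c u)^n=e^{\e u}\tilde v$, hence so is $\max(\f',\rho-c)$ by Lemma \ref{lem:maxsubsolution}; this max is bounded on the whole ball, so Proposition \ref{pro:visc=pluripot+tw} applies on a Euclidean open set and yields $(dd^c\max(\f',\rho-c))^n_{BT}\ge e^{\e\max(\f',\rho-c)}\tilde v$ everywhere. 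Restricting to the plurifine open set $\{\f'>\rho-c\}$ and letting $c\to\infty$, the left side increases to $\langle(dd^c\f')^n\rangle$ -- here is where \cite{BEGZ} legitimately enters, to show the limit is independent of the choice of truncating sequence -- while the right side increases to $e^{\e\f'}\tilde v$ since $\tilde v$ puts no mass on $\{\f'=-\infty\}$. With this substitution your proof closes; without it, the forward implication is not established.
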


\subsubsection{(Super)solutions}

\begin{defi}
A supersolution of $(DMA^{\e}_v)$ is a supersolution of $(DMA^{\e}_v)_+$, that is, a lower semicontinuous  function 
$\f: X\to \R \cup \{+\infty\}$ such that $\f\not \equiv +\infty$
and the following property is satisfied:
if $x_0\in X$ and  $q \in {\mathcal C}^2$, defined in a neighborhood of $x_0$, is such that $\f(x_0)=q(x_0)$ and
$
 \f-q \  \text{ has a local minimum at} \ x_0,
 $
 then $F_+(q^{(2)}_{x_0})\geq 0$. 
\end{defi}

\begin{defi}
A viscosity solution of $(DMA^{\e}_v)$ is a function that is both a sub-and a supersolution. In particular, viscosity solutions
are automatically continuous.

A pluripotential solution of $(DMA^{\e}_v)$ is an
 usc function $\f \in L^{\infty}\cap PSH(X,\omega)$
such that $(\omega+dd^c\f)^n_{BT}= e^{\e\f} v$.

Classical sub/supersolutions are ${\mathcal C}^2$ viscosity sub/supersolutions. 
\end{defi}

\subsection{The local comparison principle} 

\begin{defi} \text{ }

1) The local (viscosity) comparison principle for $(DMA^{\e}_v)$ is said to hold if the following holds true:
 let $\Omega\subset X$ be an open subset such that $\bar \Omega$ is biholomorphic to a bounded smooth strongly pseudoconvex domain in $\C^n$;
let  $\underline{u}$ (resp. $\overline{u}$) be a bounded subsolution (resp.supersolution)  of $(DMA^{\e}_v)$ in $\Omega$ satisfying
$$
 \limsup_{z\to \partial \Omega}  \left[ \underline{u}(z)-\overline{u}(z) \right] \le 0
$$ 
Then $\underline{u}\le \overline{u}$. 

\smallskip

2) The global (viscosity) comparison principle  for $(DMA^{\e}_v)$ is said to hold if $X$ is compact and the following holds true:
let $\underline{u}$ (resp. $\overline{u}$) be a bounded subsolution (resp. supersolution) of $(DMA^{\e}_v)$  in $X$.
Then $\underline{u}\le \overline{u}$. 
\end{defi}

We set the same definition with $(DMA^{\e}_v)_+$ in place of $(DMA^{\e}_v)$. 
Observe that $(DMA^{\e}_v)_+$ may have extra subsolutions, thus the comparison principle for $(DMA^{\e}_v)_+$
implies the comparison principle for $(DMA^{\e}_v)$. 

\medskip

The local viscosity comparison principle does not hold for $(DMA^0_{0})_+$. 
Indeed every usc function is a subsolution, the condition 
to be tested is actually empty. 
It is not clear whether it holds for $(DMA^{0}_0)$
since it is actually a  statement which differs 
substantially from the (pluripotential) comparison principle for the complex Monge-Amp\`ere 
equation of \cite{BT2}.

\begin{prop}\label{comp}
The local viscosity comparison principle for $(DMA^{\e}_v)$ holds if $\e>0$ and  $v>0$.
\end{prop}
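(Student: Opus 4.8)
The statement to prove is the local viscosity comparison principle for $(DMA^{\e}_v)$ when $\e>0$ and $v>0$. Since every subsolution of $(DMA^{\e}_v)$ is a subsolution of $(DMA^{\e}_v)_+$, and (because $v>0$) subsolutions of $(DMA^{\e}_v)_+$ are subsolutions of $(DMA^{\e}_v)$, the two equations have the same subsolutions and supersolutions here; so it suffices to work with $(DMA^{\e}_v)_+$, whose scalar reduction $(DMA^{\e}_{v|z})_+$ given in Lemma \ref{hyp} is degenerate elliptic \emph{and} proper (the properness coming precisely from $\e>0$), and whose right-hand side is bounded below by a positive constant when $v>0$. The plan is to reduce the statement, via the coordinate chart $z$ and a smooth local potential $h$ for $\omega$, to a scalar comparison principle on a bounded strongly pseudoconvex domain $D\subset\C^n$ for the equation $e^{\e u}W-\det(u_{z\bar z})_+=0$, and then to run the standard Crandall--Ishii--Lions machinery.

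\textbf{Main steps.} First I would localize: cover $\bar\Omega$ by a single chart (using that $\bar\Omega$ is biholomorphic to a bounded smooth strongly pseudoconvex domain in $\C^n$) and absorb $\omega$ into a smooth potential $h$, so that $\underline u,\overline u$ translate into a bounded scalar subsolution $\underline U$ and supersolution $\overline U$ of $(DMA^{\e}_{v|z})_+$ on $D$ with $\limsup_{z\to\partial D}(\underline U-\overline U)\le 0$. Second, I would argue by contradiction: if $\sup_D(\underline U-\overline U)=:m>0$, the boundary condition forces the supremum to be attained at an interior point. Third, I would apply the \emph{doubling of variables} technique of \cite{CIL}: for $\a>0$ large, consider
$$
M_\a=\sup_{(x,y)\in \bar D\times\bar D}\Bigl(\underline U(x)-\overline U(y)-\tfrac{\a}{2}|x-y|^2\Bigr),
$$
take a maximizing pair $(x_\a,y_\a)$, and invoke the Theorem on Sums (the Crandall--Ishii lemma) to produce hermitian matrices $X,Y$ with $X\le Y$ that serve as second-order jets of $\underline U$ at $x_\a$ and of $\overline U$ at $y_\a$. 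Fourth, I would feed these jets into the subsolution and supersolution inequalities for $F_+$ and use monotonicity of $\det_+$ together with the matrix inequality $X\le Y$ and the strict sign of $\e$ in the $e^{\e u}$ term to derive
$$
e^{\e\,\underline U(x_\a)}W(x_\a)\le \det(X+H)_+\le \det(Y+H)_+\le e^{\e\,\overline U(y_\a)}W(y_\a),
$$
where $H=dd^c h$ is the potential's Hessian. Letting $\a\to\infty$ so that $x_\a,y_\a$ converge to a common interior maximum point and $\underline U(x_\a)-\overline U(y_\a)\to m>0$, the properness (strict monotonicity in $u$ coming from $\e>0$) forces $e^{\e\,\underline U}W>e^{\e\,\overline U}W$ there, contradicting the chain of inequalities. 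Hence $m\le 0$ and $\underline u\le\overline u$.

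\textbf{The main obstacle.} The delicate point — and the one the introduction flags as requiring a modification of \cite{CIL} by a \emph{localization technique} — is that the Crandall--Ishii lemma is stated for equations on open subsets of $\R^N$ (here $\R^{2n}$), whereas $(DMA_{v|z}^{\e})_+$ lives naturally on a complex domain and the degenerate term $\det(\cdot)_+$ is only \emph{degenerate} elliptic. I expect the real work to be in verifying that the structural conditions of \cite{IL,CIL} — degenerate ellipticity (1.2), properness (2.11), and the continuity estimate (2.18) controlling the dependence of $F_+$ on the spatial variable through $W$ and $h$ — hold uniformly on $\bar D$, and in handling the $+$-truncation $\det_+$ carefully so that the monotonicity step $\det(X+H)_+\le\det(Y+H)_+$ is legitimate even when $X+H$ or $Y+H$ fails to be positive. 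The boundary behavior on a merely strongly pseudoconvex (not necessarily smooth in the chart) domain, and ensuring the maximizing pair stays in the interior, is where the localization argument must be invoked rather than the global compactness used in the sequel.
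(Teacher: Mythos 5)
Your proposal is correct in substance but follows a genuinely different route from the paper. You invoke the doubling-of-variables device and the Theorem on Sums (the Crandall--Ishii lemma) as a black box; the paper instead avoids the Theorem on Sums entirely for the local statement and works directly with the sup-convolution $\underline{u}^{\a}$ of the subsolution and the inf-convolution $\overline{u}_{\a}$ of the supersolution in a \emph{single} variable. These are semi-convex/semi-concave, hence twice differentiable a.e.\ by Alexandrov's theorem; at a maximum point of $\underline{u}^{\a}-\overline{u}_{\a}$ (reached in the interior after the $\pm\delta$ boundary trick) the classical maximum principle gives $0\le dd^c\underline{u}^{\a}\le dd^c\overline{u}_{\a}$, hence an inequality between determinants, and Jensen's lemma handles the points where twice differentiability fails. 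In effect the paper unpacks the proof of the Theorem on Sums rather than citing it, which keeps the argument self-contained and makes transparent where positivity of $v$ and $\e>0$ enter; your version is shorter but leans on the full strength of \cite{CIL} Theorem 3.2/3.3, whose structure condition (3.14) you should verify for $F_+(x,r,X)=e^{\e r}W(x)-\det(X^{1,1})_+$ (it holds because $W$ is continuous and the equation has no gradient dependence, so $F_+(y,r,X)-F_+(x,r,X)=e^{\e r}(W(y)-W(x))$ is controlled by a modulus of $|x-y|$ alone). Two smaller corrections: the ``localization technique'' flagged in the introduction is not needed here --- it is the device (the auxiliary function $\f_3$ vanishing to high order along part of the diagonal) used for the \emph{global} comparison principle on the compact manifold, where there is no boundary to push the maximizing pair away from; for the local statement the domain sits in $\C^n$ via the chart and the standard machinery applies verbatim, the boundary geometry playing no role beyond the hypothesis $\limsup(\underline{u}-\overline{u})\le 0$. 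Also, once you pass to $u=\f+h$ as in Lemma \ref{hyp} the potential is already absorbed, so the chain of inequalities should read $e^{\e\underline{U}(x_\a)}W(x_\a)\le\det(X^{1,1})_+\le\det(Y^{1,1})_+\le e^{\e\overline{U}(y_\a)}W(y_\a)$ with no extra $H$, and the passage from the real symmetric inequality $X\le Y$ to the hermitian one is by testing on $(Z,Z)$ and $(iZ,iZ)$. Finally, the paper notes the result also follows from \cite{AFS}, Corollary 4.8.
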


\begin{proof}
The proposition actually follows from \cite{AFS}, corollary 4.8. 
We include  for the reader's convenience
a proof which is an adaptation of arguments 
in \cite{CIL}.

We may assume without loss of generality that $\e=1$. 
Let $\underline{u}$ be a bounded subsolution
and $\overline{u}$ be a bounded supersolution
 of $(DMA^{\e}_v)$ in some smoothly bounded
strongly pseudoconvex open set $\Omega$, 
such that $\underline{u} \leq \overline{u}$ on $\partial{\Omega}$.
Replacing first $\underline{u},\overline{u}$ by 
$\underline{u}-\delta,\overline{u}+ \delta$, we can assume that 
the inequality is strict and holds in a small neighborhood of $\partial \Omega$.

 As in the proof of Proposition \ref{pro:visc=pluripot+tw}, we regularize
 $\underline{u}$ and $\overline{u}$
 using their sup/inf 
convolutions.
Since $\underline{u},\overline{u}$ are bounded, multiplying by a small constant, we can assume that for $\a > 0$ small enough and $ x \in \Omega_{\a}$ , we have 
$$
\underline{u}^{\a}(x):=\sup_{y \in \Omega} \left\{ \underline{u}(y) -\frac{1}{2\a^2} |y-x|^2 \right\} = 
\sup_{\vert y - x\vert \leq \a } \left\{ \underline{u}(y) -\frac{1}{2\a^2} |y-x|^2 \right\}, 
$$
and
$$
\overline{u}_{\a}(x):=\inf_{y \in \Omega_{\a}} \left\{ \overline{u}(y) +\frac{1}{2\a^2} |y-x|^2 \right\}
= \inf_{\vert y - x\vert \leq \a } \left\{ \overline{u}(y) +\frac{1}{2\a^2} |y-x|^2 \right\}.
$$

Then for $\a > 0$ small enough $\underline{u}^{\a}(x) \leq \overline{u}_{\a}(x)$ near the boundary of $\Omega_{\a}$. Observe that, if we set  
$M_{\a}:=\sup_{\overline{\Omega}_{\a}} [\underline{u}^{\a}-\overline{u}_{\a}]$, then 
$$
\liminf_{\a \rightarrow 0^+} M_\a \geq  \sup_{\Omega} [\underline{u}-\overline{u}].
$$

Arguing by contradiction, assume that 
$\sup_{\Omega} [\underline{u} - \overline{u}] > 0$. 
Then for $\a > 0$ small enough, 
the supremum $M_\a$ is $>0$ and then it is attained at some point $x_{\a} \in \Omega_{\a}$. 

The function $\underline{u}^{\a}$ is semi-convex and $ \overline{u}_{\a}$ is
semi-concave. 
In particular they are twice differentiable
 almost everywhere on $\Omega_{\a}$ by a theorem of Alexandrov \cite{Ale} 
(see also \cite{CIL}) in  the following sense:

\begin{defi} \label{w2diff}
 A real valued function $u$ 
defined an open set $\Omega \subset \C^n$ 
is twice differentiable  at almost every point  $z_0 \in  
 \Omega$ if and only if for every point  $z_0 \in  
 \Omega$ outside a Borel set of Lebesgue measure 
$0$ in $\Omega$, there exists a quadratic form $Q_{z_0} u$ on $\R^{2 n}$,
 whose polar symetric  bilinear form will be  denoted by $D^2 u (z_0)$, such that for 
 any $ \xi \in \R^{2 n}$ with $\vert \xi \vert << 1$, we have
 
 \begin{equation}
  u (z_0 + \xi) = u (z_0) + D u (z_0) \cdot \xi + (1\slash 2) D^2 u (z_0)\cdot (\xi,\xi) + o (\vert \xi\vert^2). \ \ 
  \end{equation}

\end{defi}

We first deduce a contradiction 
under the unrealistic assumption that
 they are twice differentiable at the point $x_{\a}$. Then by the classical maximum principle we have 
$$
 D^2 \underline{u}^{\a} (x_\a) \leq D^2 \overline{u}_{\a} (x_\a),
$$
in the sense of quadratic forms on $\R^{2n}$.
Applying this inequality for vectors of the form $(Z,Z)$ and
 $(i Z, i Z)$ and adding we get the same inequality for Levi forms on $\C^n$, i.e.:

$$
  0 \leq dd^c \underline{u}^{\a} (x_\a) \leq dd^c \overline{u}_{\a}(x_\a),
$$
where the first inequality follows from the fact that $\underline{u}^{\a}$ is plurisubharmonic on $\Omega_\a$ since $\underline{u}$ is.
 From this inequality between non negative  hermitian forms on $\C^n$,
 it follows that the same inequality holds between their determinants i.e.:
$$
 (dd^c \underline{u}^{\a})^n (x_\a) \leq (dd^c \overline{u}_{\a})^n(x_\a).
$$
We know that 
$$
(dd^c \overline{u}_{\a})^n (x_\a) \leq e^{\overline{u}_{\a} (x_\a)} f^{\a} (x_\a) \beta_n,
$$
where $f_{\a}$ increases pointwise towards $f$, with $v=f \beta_n$,
and 
$$
(dd^c \underline{u}^{\a})^n (x_\a) \geq e^{\underline{u}^{\a} (x_\a)} f_{\a} (x_\a) \beta_n,
$$
where $f^{\a}$ decreases towards $f$ pointwise. Therefore we have for $\a$ small enough,
\begin{equation}
e^{\underline{u}^{\a} (x_\a)} f_{\a} (x_\a) \leq e^{\overline{u}_{\a} (x_\a)} f^{\a} (x_\a).
\label{eq:FINEQ1}
\end{equation}
From this inequality we deduce immediately that 
$$
\sup_{\Omega} [\underline{u}-\overline{u}] \leq \overline{\lim}_{\a \rightarrow 0} M_{\a} \leq 0=\lim_{\a \rightarrow 0} \log \frac{f^{\a}(x_\a)}{f_{\a}(x_{\a})},
$$
which contradicts our assumption that $\sup_{\Omega} [\underline{u}-\overline{u}] > 0$.

\smallskip

When $\underline{u}^{\a}, \overline{u}_{\a}$ 
are not twice differentiable at point $x_{\a}$ for a fixed $\a >0$ 
small enough, we  prove that the inequality (\ref{eq:FINEQ1})
 is still valid by approximating $x_\a$ by a 
sequence of points where 
the functions are twice differentiable and not 
far from attaining their maximum at that points. 
 For each $k \in \N^*$, the
semiconvex function $\underline{u}^{\a} - \overline{u}_{\a} - 
(1 \slash 2 k) \vert x - x_\a\vert^2 $ attains a strict maximum at $x_\a$.
 By  Jensen's lemma 
(\cite{Jen}, see also \cite{CIL} Lemma A.3 p. 60),
there exists a sequence $(p_k)_{k \geq 1}$ of  vectors converging to 
$0$ in $\R^n$ and a sequence of points $(y_k)$ converging to $x_\a$ in $\Omega_\a$ such that  the functions $\underline{u}^{\a}$ and $ \overline{u}_{\a}$ are twice differentiable at $y_k$ and, if we set  $q_k (x) = (1 \slash 2 k) \vert x - x_\a\vert^2 + <p_k, x>$, the function $\underline{u}^{\a} - \overline{u}_{\a} - q_k$ attains its maximum on $\Omega_\a$ at the point $y_k$. 

 Applying the classical maximum principle for fixed $\a$ at each point $y_k$ we get
 $$
 D^2  \underline{u}^{\a} (y_k) \leq D^2\overline{u}_{\a} (y_k) + (1 \slash k) I_n,
 $$
 in the sense of quadratic forms on $\R^{2 n}$.
  As before we obtain the following inequalities between Levi forms
\begin{equation} \label{eq:Maxp}
 0 \leq dd^c \underline{u}^{\a} (y_k) \leq dd^c \overline{u}_{\a} (y_k) + (1 \slash k) dd^c \vert x\vert^2 ,
\end{equation}
in the sense of positive hermitian forms on $\C^n$, where the first inequality follows from the fact that $\underline{u}^{\a}$ is plurisubharmonic on $\Omega_\a$.
 The inequality (\ref{eq:Maxp}) between positive hermitian forms implies the same inequality between their determinants, so that
 
\begin{equation} \label{eq:Det1}
 (dd^c \underline{u}^{\a}(y_k))^n  \leq (dd^c \overline{u}_{\a} (y_k) + (1 \slash k) dd^c \vert x\vert^2)^n
\end{equation}

Recall that  $ \overline{u}_{\a} - (1 \slash 2 \a^2)\vert x \vert^2$ is
  concave on $\Omega_\a$, hence:
$$
  D^2 \overline{u}_{\a} (y_k) \leq  (1 \slash 2 \a^2) I_n,
$$
 in the sense of quadratic forms on $\R^{2 n}$. Therefore:
 \begin{equation} \label{eq:Lev}
  dd^c \overline{u}_{\a} (y_k) \leq  (1 \slash  2\a^2) dd^c \vert x\vert^2.
 \end{equation}
 From  (\ref{eq:Maxp}) and (\ref{eq:Lev}), it follows that, $\alpha$ being fixed :
 \begin{equation} \label{eq:Det2}
 (dd^c \overline{u}_{\a} (y_k) + (1 \slash k) dd^c \vert x\vert^2)^n = (dd^c \overline{u}_{\a}(y_k))^n + O (1\slash k).
 \end{equation}

 We know by definition of subsolutions and supersolutions that 
 \begin{equation} \label{eq:Sub-Sup}
 (dd^c \underline{u}^{\a}(y_k))^n  \geq e^{\underline{u}^{\a} (y_k)} f_{\a} (y_k) \beta_n, \ \ (dd^c \overline{u}_{\a}(y_k))^n  \leq  
 e^{\overline{u}_{\a} (y_k)} f^{\a} (y_k) \beta_n.
 \end{equation}
 Therefore from the inequalities (\ref{eq:Det1}), (\ref{eq:Det2}) and (\ref{eq:Sub-Sup}), it follows that for any $k \geq 1$ we have

$$
e^{\underline{u}^{\a} (y_k)} f_{\a} (y_k) \leq e^{\overline{u}_{\a} (y_k)} f^{\a} (y_k) +  O (1\slash k),
$$
which implies the inequality (\ref{eq:FINEQ1}) as $k \to + \infty$.
The same argument as above then gives a contradiction. 
\end{proof}

\subsection{The global viscosity comparison principle}

The global comparison principle can be deduced from  \cite{AFS} when
$X$ carries a K\"ahler metric with positive sectional curvature. 
This global curvature
assumption is very strong: as explained in the introduction
 \cite{Mok} reduces us to a situation
where one can regularize $\omega$-psh functions with no loss of positivity \cite{Dem2},
so that the viscosity approach is not needed to achieve continuity (see \cite{EGZ1}).
On the other hand \cite{AFS} considers very general degenerate elliptic equations
whereas we are considering a rather restricted class of complex
 Monge-Amp\`ere equations.

In the general case, neither \cite{AFS} nor \cite{HL} allows to establish a global 
comparison principle even in the simplest case that we now consider:

\begin{prop} \label{ay} The global comparison
principle holds when the cohomology class of $\omega$ is K\"ahler
and $v$ is continuous and positive.
\end{prop}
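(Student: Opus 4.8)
The plan is to argue by contradiction and to run the two–variable (doubling) version of the argument that proves Proposition \ref{comp}, the point of the K\"ahler hypothesis being twofold: it lets me normalize $\omega$ and it provides a smooth penalization. Take $\e=1$. Since $[\omega]$ is K\"ahler there is $\rho\in{\mathcal C}^\infty(X)$ with $\theta:=\omega+dd^c\rho>0$; replacing $\underline{u},\overline{u},\omega,v$ by $\underline{u}-\rho,\overline{u}-\rho,\theta,e^{\rho}v$ affects neither the sub/supersolution conditions nor the conclusion $\underline{u}\le\overline{u}$, so I assume henceforth that $\omega$ is a K\"ahler form, with associated Riemannian distance $d$, and that locally $v=f\beta_n$ with $f>0$ continuous.

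Suppose $M:=\sup_X(\underline{u}-\overline{u})>0$. For $\a>0$ consider on the compact manifold $X\times X$ the upper semicontinuous function $\Psi_\a(x,y):=\underline{u}(x)-\overline{u}(y)-\tfrac{1}{2\a^2}d(x,y)^2$, and let it attain its maximum at some $(x_\a,y_\a)$; here compactness of $X\times X$ replaces the boundary condition of Proposition \ref{comp}. The classical estimates of \cite{CIL} apply verbatim: $\tfrac{1}{2\a^2}d(x_\a,y_\a)^2\to 0$ and $\underline{u}(x_\a)-\overline{u}(y_\a)\to M$ as $\a\to0$, while both $x_\a$ and $y_\a$ converge to a common point $x_0$ at which $(\underline{u}-\overline{u})(x_0)=M$. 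In particular $(x_\a,y_\a)$ eventually lies in a fixed holomorphic chart near $x_0$, in which $\omega=dd^c h$ for a smooth strictly plurisubharmonic $h$ and $d(x,y)^2=|x-y|^2+O(|x-y|^3)$.

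Working in that chart I would then apply Ishii's lemma (the theorem on sums, \cite{CIL}), which is the two–variable form of the Alexandrov--Jensen step of Proposition \ref{comp}: it produces symmetric real Hessians $A_\a$ at $x_\a$ for $\underline{u}+h$ and $B_\a$ at $y_\a$ for $\overline{u}+h$ with $A_\a\le B_\a$ up to an error controlled by $\a^{-2}d(x_\a,y_\a)^2$, whence, passing to Levi forms as in Proposition \ref{comp}, $0\le dd^c(\underline{u}+h)(x_\a)\le dd^c(\overline{u}+h)(y_\a)+o(1)$. Monotonicity of the determinant then gives $\bigl(dd^c(\underline{u}+h)(x_\a)\bigr)^n\le\bigl(dd^c(\overline{u}+h)(y_\a)\bigr)^n+o(1)$; combining this with the subsolution inequality at $x_\a$ and the supersolution inequality at $y_\a$ yields $e^{\underline{u}(x_\a)}f(x_\a)\le e^{\overline{u}(y_\a)}f(y_\a)+o(1)$, that is $e^{\underline{u}(x_\a)-\overline{u}(y_\a)}\le f(y_\a)/f(x_\a)+o(1)$. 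Since $x_\a,y_\a\to x_0$ and $f$ is continuous and positive, the right–hand side tends to $1$, so letting $\a\to0$ gives $e^{M}\le 1$, contradicting $M>0$.

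The heart of the matter, and what the authors flag as a modification of \cite{CIL}, is precisely this localization: the sup-/inf-convolution and Alexandrov--Jensen machinery is Euclidean, and one must transport it to the chart around $x_0$ and check that both the second derivatives of the smooth potential $h$ and, crucially, the manifold curvature terms by which $\tfrac{1}{2\a^2}d(x,y)^2$ deviates from $\tfrac{1}{2\a^2}|x-y|^2$ contribute only $o(1)$ to the determinant comparison; this is exactly where the input $\a^{-2}d(x_\a,y_\a)^2\to0$ is used. Granting that, the contradiction rests, as in Proposition \ref{comp}, on the strict exponential gap furnished by $\e>0$ together with the continuity and strict positivity of $v$.
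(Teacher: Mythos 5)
Your proposal takes a genuinely different route from the paper's proof of this proposition, which is deliberately soft: since $[\omega]$ is K\"ahler, the Aubin--Yau theorem supplies a classical ${\mathcal C}^2$ solution $\varphi_Y$ of $(DMA^1_{v^*})$ for a smooth $v^*$ squeezed between $e^{-\delta}v$ and $e^{\delta}v$, and Lemma \ref{lem:classical} (evaluating the definitions at a maximum of $\underline{u}-\varphi_Y$, resp.\ a minimum of $\overline{u}-\varphi_Y$) sandwiches every bounded subsolution below $\varphi_Y$ and every bounded supersolution above it; letting $\delta\to 0$ concludes. What you propose instead is the doubling-of-variables argument that the paper reserves for the harder Theorem \ref{qcp}, where the K\"ahler hypothesis is dropped; so the fair comparison is with that proof, and against it your argument has a genuine gap.

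The gap is at the determinant step. The theorem on sums gives Levi forms $P_\alpha:=dd^c(\underline{u}+h)(x_\alpha)$ and $Q_\alpha:=dd^c(\overline{u}+h)(y_\alpha)$ with $0\le P_\alpha\le Q_\alpha+E_\alpha$, where the eigenvalues of $P_\alpha,Q_\alpha$ may blow up like $\alpha^{-2}$ and your error $E_\alpha$ --- coming from the curvature correction $\alpha^{-2}O\bigl(d(x_\alpha,y_\alpha)^2\bigr)$ by which $d^2$ deviates from the flat penalization --- is only $o(1)$. For the degenerate elliptic operator $\det$, an $o(1)$ perturbation of unbounded semipositive matrices does \emph{not} yield an $o(1)$ perturbation of determinants: with $P=\mathrm{diag}(\alpha^{-2},\alpha^{2})$ and $Q=\mathrm{diag}(\alpha^{-2},0)$ one has $0\le P\le Q+o(1)$ while $\det P=1$ and $\det Q=0$; one cannot even conclude $Q_\alpha\ge 0$, which you need in order to invoke the supersolution inequality. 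Quantitatively, the subsolution inequality only forces $\lambda_{\min}(P_\alpha)\gtrsim \alpha^{2(n-1)}$, so the Hessian error must be $o(\alpha^{2(n-1)})$ rather than $o(1)$ for $n\ge 2$. This is exactly the ``localization technique'' of Theorem \ref{qcp}: the auxiliary Riemannian metric is built (by a partition of unity) to be \emph{literally flat} on $B(0,3)$, so the curvature term vanishes identically near the limit point, and the localizing penalization $\varphi_1=\chi\cdot\sum_i|z^i(x)-z^i(y)|^{2n}$ carries the exponent $2n$ precisely so that its Hessian at $(x_\alpha,y_\alpha)$ is $O\bigl(d(x_\alpha,y_\alpha)^{2n}\bigr)$, small enough to be absorbed against $\lambda_{\min}(P_\alpha)$. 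Without such a device your final inequality $e^{\underline{u}(x_\alpha)-\overline{u}(y_\alpha)}\le f(y_\alpha)/f(x_\alpha)+o(1)$ is not justified. (The rest of your scheme --- normalizing $\omega$ to a K\"ahler form, compactness replacing the boundary condition, $\alpha^{-2}d(x_\alpha,y_\alpha)^2\to 0$ --- is fine.)
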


\begin{proof}
We can assume without loss of generality that $\e=1$.

Assume first $v>0$ and smooth. By \cite{Aub,Yau}, there is $\f_Y\in C^{2}(M)$ a 
classical solution of 
$(DMA^{1}_v)$. If $\underline{u}$ is a subsolution then $\underline{u}\le \f_Y$,
as follows from Lemma \ref{lem:classical} below. 
Similarly, if $\overline{u}$ is a supersolution $\overline{u}\ge \f$. 
If $v>0$ is merely continuous, fix $\delta>0$. Then, if $\underline{u}$ is a subsolution of 
$(DMA^{1}_v)$, 
$\underline{u}-\delta$ is a subsolution of $(DMA^{1}_{e^{\delta}v})$. 
Similarly, if $\overline{u}$ is a supersolution of $(DMA^{1}_v)$, $\overline{u}+\delta$ is a supersolution of $(DMA^{1}_{e^{-\delta}v})$. 
Choose $v^*$ a smooth volume form
such that $e^{-\delta}v<v^*<e^{\delta}v$. Then $\underline{u}-\delta$ is a subsolution
of $(DMA^{1}_{v^*})$ and $\overline{u}+\delta$ a supersolution. Hence,
$\underline{u}-\delta \le \overline{u}+\delta$. Letting $\delta \to 0$, we conclude the proof. 
\end{proof}

\begin{lem} \label{lem:classical}
Assume $v>0$. 
Let $\underline{u}$  be a bounded subsolution of $(DMA^{\e}_v)$ on $X$. If  $\overline{u}$  is a ${\mathcal C}^2$ supersolution on $X$,
then $\underline{u}\le \overline{u}$. 
\end{lem}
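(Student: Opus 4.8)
The plan is to take advantage of the fact that the supersolution $\overline{u}$ is already $\mathcal{C}^2$: no sup/inf-convolution regularization (as in Proposition \ref{comp}) is needed, because a constant shift of $\overline{u}$ itself is a perfectly legitimate $\mathcal{C}^2$ test function against the subsolution $\underline{u}$. Concretely, I would set $m := \sup_X (\underline{u} - \overline{u})$. Since $\underline{u}$ is bounded and upper semicontinuous and $\overline{u}$ is continuous on the compact manifold $X$, the difference $\underline{u} - \overline{u}$ is a bounded upper semicontinuous function and therefore attains its maximum $m$ at some point $x_0 \in X$. The goal is then to prove $m \le 0$.

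Next I would test the subsolution property of $\underline{u}$ with $q := \overline{u} + m$, which is $\mathcal{C}^2$. By construction $\underline{u} - q = (\underline{u} - \overline{u}) - m$ attains its (global, hence local) maximum value $0$ at $x_0$, and $\underline{u}(x_0) = q(x_0)$. The definition of a subsolution of $(DMA^{\e}_v)$ then gives $F(q^{(2)}_{x_0}) \le 0$; since $F$ equals $+\infty$ wherever $\omega + dd^c q$ fails to be nonnegative, this forces both $\omega_{x_0} + dd^c q_{x_0} \ge 0$ and $(\omega_{x_0} + dd^c q_{x_0})^n \ge e^{\e q(x_0)} v_{x_0}$. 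As $m$ is constant, $dd^c q = dd^c \overline{u}$, so in fact $\omega_{x_0} + dd^c \overline{u}_{x_0} \ge 0$ and $(\omega_{x_0} + dd^c \overline{u}_{x_0})^n \ge e^{\e m}\, e^{\e \overline{u}(x_0)} v_{x_0}$.

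On the other hand, since $\overline{u}$ is a $\mathcal{C}^2$ supersolution, testing with $\overline{u}$ itself at $x_0$ yields $F_+(\overline{u}^{(2)}_{x_0}) \ge 0$, that is $(\omega_{x_0} + dd^c \overline{u}_{x_0})^n_+ \le e^{\e \overline{u}(x_0)} v_{x_0}$. Having just established $\omega_{x_0} + dd^c \overline{u}_{x_0} \ge 0$, the truncation is inactive at $x_0$, so the subscript $+$ may be dropped. Chaining the two inequalities gives
$$ e^{\e m}\, e^{\e \overline{u}(x_0)} v_{x_0} \le (\omega_{x_0} + dd^c \overline{u}_{x_0})^n \le e^{\e \overline{u}(x_0)} v_{x_0}. $$
Dividing through by $e^{\e \overline{u}(x_0)} v_{x_0}$, which is strictly positive because $v>0$, produces $e^{\e m} \le 1$, hence $m \le 0$ (using $\e>0$). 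Therefore $\underline{u} \le \overline{u}$ on $X$, as desired.

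I expect the only genuinely delicate point to be the compatibility of the two truncation conventions: one must check that the subsolution inequality at the contact point $x_0$ already delivers the positivity $\omega_{x_0} + dd^c \overline{u}_{x_0} \ge 0$, which is precisely what licenses replacing $(\cdot)^n_+$ by the honest determinant $(\cdot)^n$ on the supersolution side. Everything else is routine. The strict positivity $v>0$ and the hypothesis $\e>0$ enter only in the final division step, and are exactly what upgrade the chained inequality to the strict conclusion $m \le 0$; with $\e = 0$ or $v$ merely semi-positive the argument would stall.
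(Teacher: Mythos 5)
Your proof is correct and follows essentially the same route as the paper's: both test the subsolution $\underline{u}$ against $q=\overline{u}+m$ at a point where $\underline{u}-\overline{u}$ attains its maximum $m$, chain the resulting inequality with the classical supersolution inequality for $\overline{u}$, and conclude $e^{\e m}\le 1$. Your additional care about the truncation $(\cdot)^n_+$ versus $(\cdot)^n$ and the explicit role of $\e>0$ in the final division is a welcome clarification of points the paper leaves implicit.
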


Note in particular that if $(DMA^{\e}_v)$ has a classical solution then it dominates (resp. minorates) 
every subsolution 
(resp. supersolution), hence the global viscosity principle holds.

\begin{proof}
 If $\overline{u}$ is classical, the fact that $\underline{u} \le \overline{u}$ is a trivial consequence of the definition of subsolution at a maximum of $\underline{u} - \overline{u}$. Indeed let $x_0\in X$ such that
$\underline{u}(x_0) - \overline{u}(x_0)= \max _X (\underline{u} - \overline{u})=m$. 
Use $q=\overline{u}+m$
as a test function in the definition of $\underline{u}$ being a viscosity subsolution to deduce: 
$$ (\omega+dd^c\overline{u})^n_{x_0} \ge e^{\overline{u}(x_0)+m}v. 
$$
On the other hand, $\overline{u}$ being a classical supersolution, we have
$$(\omega+dd^c\overline{u})^n_{x_0} \le e^{\overline{u}(x_0)}v. 
$$
Hence $m\le 0$. 
\end{proof}

\smallskip

The above remarks have only academic interest since we use existence of a classical solution to deduce
a comparison principle whose main consequence is existence of a viscosity solution (cf. infra).
 We need to establish a honest global comparison principle that will allow
us to produce solutions without invoking \cite{Aub,Yau}.
We now come to this result:

\begin{theo}\label{qcp}
The global viscosity comparison principle for $(DMA^{\e}_v)$ holds, 
provided $\omega$ is a closed real $(1,1)$-form, $v>0$, $\e>0$ and $X$ is compact
\footnote{We do NOT assume that $X$ is K\"ahler. However, this statement seems to be useless outside the Fujiki class.}.
\end{theo}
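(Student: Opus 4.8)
The plan is to argue by contradiction, following the scheme of the proof of Proposition \ref{comp} but replacing the boundary condition of the local principle by a \emph{localization} around a global maximum. We may assume $\e=1$. Suppose $\underline{u}$ is a bounded subsolution and $\overline{u}$ a bounded supersolution of $(DMA^{\e}_v)$ on $X$, and that $m:=\sup_X[\underline{u}-\overline{u}]>0$. Since $X$ is compact and $\underline{u}-\overline{u}$ is upper semicontinuous, $m$ is attained at some $x_0\in X$. I would fix a holomorphic chart $z:\Omega\to\C^n$ around $x_0$ together with a $\mathcal{C}^2$ local potential $h$ of $\omega$ on $\Omega$, chosen so small that $x_0$ is the only maximum of $\underline{u}-\overline{u}$ on $\bar\Omega$ and $m':=\sup_{\partial\Omega}[\underline{u}-\overline{u}]<m$. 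By Lemma \ref{hyp}, in these coordinates $\underline{U}:=\underline{u}+h$ is a local subsolution and $\overline{U}:=\overline{u}+h$ a local supersolution of the scalar twisted equation $\det(w_{z\bar z})=e^{w}\tilde v$, where $\tilde v=e^{-h}v>0$ still has continuous density. Note that only the \emph{local} potential $h$ is used, so no global K\"ahler (or Fujiki) hypothesis on $X$ enters; this is exactly what allows $\omega$ to be an arbitrary closed real $(1,1)$-form.

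Next I would regularize exactly as in the proof of Proposition \ref{comp}, using the sup- and inf-convolutions $\underline{U}^{\a}$ and $\overline{U}_{\a}$ on the shrunk chart $\Omega_{\a}$. By \cite{Ish} these satisfy, in the viscosity sense, $(dd^c\underline{U}^{\a})^n\ge e^{\underline{U}^{\a}}f_{\a}\,\beta_n$ and $(dd^c\overline{U}_{\a})^n\le e^{\overline{U}_{\a}}f^{\a}\,\beta_n$, with $f_{\a}\nearrow f$ and $f^{\a}\searrow f$ where $\tilde v=f\beta_n$; moreover $\underline{U}^{\a}$ is plurisubharmonic (so $dd^c\underline{U}^{\a}\ge0$) and semiconvex, while $\overline{U}_{\a}$ is semiconcave. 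The decisive point is the following localization claim, which takes over the role played by the boundary condition in Proposition \ref{comp}: for $\a>0$ small enough the quantity $M_{\a}:=\sup_{\bar\Omega_{\a}}[\underline{U}^{\a}-\overline{U}_{\a}]$ is attained at an interior point $x_{\a}\in\Omega_{\a}$, and $x_{\a}\to x_0$ with $\liminf_{\a\to0}M_{\a}\ge m$. This should hold because the sup/inf-convolutions displace values only over distances $O(\a)$, so the strict gap $m-m'>0$ between the interior maximum and the boundary values of $\underline{u}-\overline{u}$ is preserved and forces $x_{\a}$ to stay well inside $\Omega$.

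Granting this, the conclusion is the computation of Proposition \ref{comp} carried out verbatim at $x_{\a}$. By Alexandrov's theorem \cite{Ale} the semiconvex function $\underline{U}^{\a}$ and the semiconcave function $\overline{U}_{\a}$ are twice differentiable almost everywhere, and by Jensen's lemma \cite{Jen} one finds points $y_k\to x_{\a}$ of twice differentiability where $\underline{U}^{\a}-\overline{U}_{\a}-q_k$ attains a maximum on $\Omega_\a$, with $q_k\to0$ in $\mathcal{C}^2$. The second-order maximum principle then yields $0\le dd^c\underline{U}^{\a}(y_k)\le dd^c\overline{U}_{\a}(y_k)+(1/k)\,dd^c|z|^2$, whence the determinantal inequality $(dd^c\underline{U}^{\a}(y_k))^n\le(dd^c\overline{U}_{\a}(y_k))^n+O(1/k)$; combined with the sub/supersolution inequalities above this gives, on letting $k\to\infty$, the relation $e^{\underline{U}^{\a}(x_{\a})}f_{\a}(x_{\a})\le e^{\overline{U}_{\a}(x_{\a})}f^{\a}(x_{\a})$. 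Since $\underline{U}^{\a}(x_{\a})-\overline{U}_{\a}(x_{\a})=M_{\a}\ge m>0$ and, $\tilde v$ being positive and continuous, $f^{\a}(x_{\a})/f_{\a}(x_{\a})\to1$, I obtain $e^{m}\le1$, contradicting $m>0$. This is precisely where both hypotheses are used: $\e>0$ (normalized to $\e=1$) makes the exponential strictly separate the two densities, while $v>0$ continuous makes the density ratio tend to $1$.

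The main obstacle is the localization claim of the second paragraph. On a compact manifold there is no boundary on which to impose $\underline{u}\le\overline{u}$, and one cannot sup-convolve globally; the whole argument must therefore be confined to a single chart about $x_0$, and one must show that the regularized maximum $M_{\a}$ remains strictly interior to $\Omega$ and converges to $m$. Quantifying the $O(\a)$ spatial localization of the sup/inf-convolutions against the strict gap $m-m'$ is the one step that genuinely departs from \cite{CIL}; once it is in place, the remainder of the argument is a transcription of the proof of Proposition \ref{comp}.
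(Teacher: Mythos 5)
Your reduction to a chart and your transcription of the Alexandrov--Jensen computation from Proposition \ref{comp} are fine, but the argument never gets off the ground because of the very first step: the claim that one can choose the chart $\Omega$ around a maximum point $x_0$ of $\underline{u}-\overline{u}$ so that $m':=\sup_{\partial\Omega}[\underline{u}-\overline{u}]<m$. For a merely upper semicontinuous difference of a bounded subsolution and a bounded supersolution there is no reason for the maximum to be attained at an isolated point, or even on a set that avoids the boundary of some neighbourhood: a priori $\underline{u}-\overline{u}$ could be constant equal to $m$ on all of $X$ (this is exactly the situation one must rule out to prove the comparison principle), in which case $m'=m$ for every chart and your localization collapses. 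You correctly identify this as ``the main obstacle,'' but the resolution you sketch presupposes the strict gap $m-m'>0$ rather than producing it. Note also that the obvious repair --- subtracting a small bump from $\underline{u}$ to create a strict maximum --- is delicate: a bump supported in one chart needs a cutoff, and any $O(\de)$ second-order perturbation enters the determinant comparison against eigenvalues of the regularized Hessians that degenerate like $\a^{-n+1}$, so a generic perturbation destroys the inequality $\det(X^{*1,1})/\det(X_{*}^{1,1})\ge 1+o(1)$ that drives the contradiction.

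This is precisely the point where the paper's proof of Theorem \ref{qcp} departs from the local Proposition \ref{comp}. Instead of localizing $\underline{u}-\overline{u}$ itself, it doubles variables globally and maximizes $w_*(x)-w^*(y)-\f_3(x,y)-\frac{1}{2}\a d^2(x,y)$ over $M\times M$, where the penalty $\f_3\ge 0$ is built (via $\max_{\eta}(\f_1,\f_2)$) so that it vanishes exactly on a compact piece of the diagonal inside the coordinate ball and exceeds $3C$ outside $B(0,2)^2$; this forces the maximizing pair $(x_\a,y_\a)$ into the chart with no assumption on the structure of the maximum set. Crucially, $\f_1$ is a sum of terms $|z^i(x)-z^i(y)|^{2n}$, so $\f_3$ vanishes to order $2n$ along its zero set in the diagonal and $D^2\f_3(x_\a,y_\a)=O(d(x_\a,y_\a)^{2n})=o(\a^{-n})$: the penalty is invisible at the second-order level where the eigenvalue and determinant estimates are performed via the Crandall--Ishii lemma \cite[Theorem 3.2]{CIL}. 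If you want to salvage your single-variable scheme you would have to manufacture an admissible perturbation with the same two properties (global confinement plus high-order flatness at the contact point), at which point you have essentially rebuilt the paper's construction.
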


Since $v>0$, the subsolutions of $(DMA^{\e}_v)_+$ are those of  $(DMA^{\e}_v)$ hence this could be 
stated as the global viscosity comparison principle for $(DMA^{\e}_v)_+$. 
\begin{proof}
As above, we assume $\e=1$.
 Let $u^*$  be a bounded supersolution and $u_*$ be a bounded subsolution. We choose $C>0$ such that 
both are $\le C/1000$ in $L^{\infty}$-norm. Since $u_*-u^*$
is uppersemicontinuous on the compact manifold $M$, it follows that its maximum is achieved at some point $\hat x_1 \in M$. 
 Choose complex coordinates $(z^1, \ldots, z^n)$ near $\hat x_1$
defining a biholomorphism identifying an open neighborhood of $\hat{x}_1$ to the complex ball $B(0,4)$ of radius $4$ sending $\hat{x}_1$ to zero. 

Using a partition of unity,
 construct a riemannian metric on $M$ which coincides with the
 flat K\"ahler metric $\sum_k\frac{\sqrt{-1}}{2} dz^k \wedge d\bar z^k$ on the ball of center
$0$ and radius $3$. For $(x,y)\in M\times M$ define $d(x,y)$ to be the riemannian distance function. The  continuous function $d^2$ is of class ${\mathcal C}^2$ near the diagonal and $>0$ outside the diagonal $\Delta\subset M ^2$.

Next, construct a smooth non negative function $\f_1$ on $M\times M$ by the following formula:
$$ \f_1(x,y)=\chi (x,y). \sum_{i=1}^n |z^i(x)-z^i(y)|^{2n},$$
where $\chi$ smooth non negative cut off function with $1\ge \chi \ge 0$,
$\chi\equiv 1$ on $B(0,2)^2$ $\chi=0$ near $\partial B(0,3)^2$. 

Finally, consider  a second smooth function on $M\times M$ with 
$\f_2|_{B(0,1)^2} <-1$,  $\f_2|_{M^2-B(0,2)^2} >3C$.

Choose $1\gg \eta >0$ such that  $-\eta$ is a regular value of both 
$\f_2$ and $\f_2|_{\Delta}$.

We perform convolution of $(\xi,\xi')\mapsto \max(\xi, \xi')$ by a smooth semipositive function $\rho$ such that $B_{\R^2}(0,\eta) =\{\rho>0\}$ and get a smooth function on $\R^2$
$\max_{\eta}$ such that: 
\begin{itemize}
\item $\max_{\eta} (\xi, \xi')=\max(\xi, \xi')$ if $|\xi-\xi'|\ge\eta$, 
\item $\max_{\eta} (\xi, \xi')>\max(\xi, \xi')$ if $|\xi-\xi'|<\eta$. 
\end{itemize}

We define $\f_3\in {\mathcal C}^{\infty}(M^2,\R)$ to be $\f_3=\max_{\eta}(\f_1,\f_2)$. Observe that:
\begin{itemize}
\item $\f_3\ge 0$,
\item $\f_3^{-1}(0)= \Delta \cap \{ \f_2 \le -\eta \}$,
\item $\f_3|_{M^2-B(0,2)^2} >3C$.
\end{itemize}

We define $h_{\omega}\in {\mathcal C}^2(\overline{B(0,4)}, \R)$ to be a local potential smooth up to the boundary  for $\omega$ and extend it smoothly to $M$.
We may without losing generality assume that $\|\bar h_{\omega} \|_{\infty} < C/10$.
In particular $dd^c h_{\omega}= \omega$ and $w_*=u_*+h_{\omega}$ is a viscosity subsolution
of 
$$
(dd^c \f)^n =e^{\f} W
\text{ in } B(0,4)
$$ 
with $W$ positive and continuous. 
On the other hand $w^*= u^*+h_{\omega}$ is a viscosity supersolution of the same equation.

Now fix $\alpha>0$. Consider $(x_{\alpha},y_{\alpha}) \in M^2$ such that:
\begin{eqnarray*}
M_{\alpha}&=&\sup_{(x,y)\in \overline{B(0,4)}^2} w_*(x)-w^*(y) -\f_3(x,y)-\frac{1}{2}\alpha d^2(x,y)\\
&=&w_*(x_{\alpha})-w^*(y_{\alpha}) -\f_3(x_{\alpha},y_{\alpha})-\frac{1}{2}\alpha d^2(x_{\alpha},y_{\alpha}).
\end{eqnarray*}

The sup is achieved since we are maximizing an usc function. We also have, taking into
account that $\phi_3(\hat x_1, \hat x_1)=0$:
$$ 
2C+ C/5 \ge M_{\alpha} \ge w_*(\hat x_1)-w^*(\hat x_1)\ge 0. 
$$
By construction, we see that $(x_{\alpha}, y_{\alpha}) \in B(0,2)^2$. 

\smallskip

Using \cite[Proposition 3.7]{CIL}, we deduce the:

\begin{lem}\label{ineq}
 We have $\displaystyle{\lim_{\alpha\to\infty}} \alpha d^2(x_{\alpha}, y_{\alpha})=0$. Every limit
point $(\hat x, \hat y)$ of $(x_{\alpha}, y_{\alpha})$ 
satisfies $\hat x= \hat y$, $\hat x \in \Delta \cap \{  \f_2 \le -\eta \}$
and 
\begin{eqnarray*}
w_*(\hat x)-w^*(\hat x)&=&u_*(\hat x)-u^*(\hat x) \\
&=&\max_{x\in \overline{B(0,4)}} w_*(x)-w^*(x)-\f_3(x,x)\\
&=&\max_{x\in M^2} u_*(x)-u^*(x)-\f_3(x,x)\\
&=& u_*(\hat x_1)-u^*(\hat x_1) \\
&=& w_*(\hat x_1)-w^*(\hat x_1) \\
\liminf_{\alpha \to +\infty} w_*(x_{\alpha})-w^*(y_{\alpha})&\ge & w_*(\hat x_1)-w^*(\hat x_1)
\end{eqnarray*}
\end{lem}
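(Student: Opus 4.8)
The plan is to recognize the displayed maximization as a two-scale penalization and reduce it to the standard accessibility lemma \cite[Proposition 3.7]{CIL}. First I would fold the fixed penalty $\f_3$ into the objective by setting
$$
U(x,y) := w_*(x) - w^*(y) - \f_3(x,y),
$$
which is upper semicontinuous on the compact set $\overline{B(0,4)}^2$, and keep only $\tfrac12 d^2$ as the $\alpha$-scaled penalty. Since $\tfrac12 d^2 \ge 0$ vanishes exactly on the diagonal $\Delta$, and $M_\alpha$ is non-increasing in $\alpha$ and bounded (we already have $0 \le w_*(\hat x_1)-w^*(\hat x_1) \le M_\alpha \le 2C + C/5$), the cited lemma applies and yields at once that $\lim_{\alpha\to\infty}\alpha d^2(x_\alpha,y_\alpha)=0$, that $d(x_\alpha,y_\alpha)\to 0$ so every limit point of $(x_\alpha,y_\alpha)$ lies on $\Delta$ (hence $\hat x=\hat y$), and that $\lim_\alpha M_\alpha = \sup_{x}U(x,x)$.

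Next I would identify this constrained supremum with the global maximum. The potential cancels, so $w_*-w^*=u_*-u^*$ and $U(x,x)=u_*(x)-u^*(x)-\f_3(x,x)$. Since $\f_3\ge0$ and $\hat x_1$ realizes $\max_M(u_*-u^*)$, for every $x$ (whether in $\overline{B(0,4)}$ or in $M$) one has $U(x,x)\le u_*(\hat x_1)-u^*(\hat x_1)$, with equality at $x=\hat x_1$ because $\f_3(\hat x_1,\hat x_1)=0$; hence both $\max_{\overline{B(0,4)}}U(\cdot,\cdot)$ and $\max_M[u_*-u^*-\f_3(\cdot,\cdot)]$ equal $u_*(\hat x_1)-u^*(\hat x_1)$. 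The role of the size condition $\f_3>3C$ off $B(0,2)^2$, against the uniform bound $\|u_*\|_\infty,\|u^*\|_\infty\le C/1000$, is the complementary one of confining the near-maximizers to $B(0,2)^2$, as already noted before the lemma.

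With the value of the limit in hand, the remaining assertions are squeezing arguments. For a limit point $(\hat x,\hat x)$ of $(x_\alpha,y_\alpha)$ along a subsequence, upper semicontinuity of $U$ together with $M_\alpha \le U(x_\alpha,y_\alpha)$ gives $U(\hat x,\hat x)\ge \lim_\alpha M_\alpha=u_*(\hat x_1)-u^*(\hat x_1)$; combined with $U(\hat x,\hat x)\le \max_{\overline{B(0,4)}}U=u_*(\hat x_1)-u^*(\hat x_1)$ this forces $u_*(\hat x)-u^*(\hat x)-\f_3(\hat x,\hat x)=u_*(\hat x_1)-u^*(\hat x_1)$. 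Since $u_*(\hat x)-u^*(\hat x)\le\max_M(u_*-u^*)$ and $\f_3\ge0$, both must saturate: $\f_3(\hat x,\hat x)=0$, i.e. $\hat x\in\Delta\cap\{\f_2\le-\eta\}$ by the description of $\f_3^{-1}(0)$, and $u_*(\hat x)-u^*(\hat x)=u_*(\hat x_1)-u^*(\hat x_1)$, which unwinds into the stated chain of equalities. Finally, dropping the two non-negative penalty terms from $M_\alpha = w_*(x_\alpha)-w^*(y_\alpha)-\f_3(x_\alpha,y_\alpha)-\tfrac12\alpha d^2(x_\alpha,y_\alpha)$ gives $w_*(x_\alpha)-w^*(y_\alpha)\ge M_\alpha\ge w_*(\hat x_1)-w^*(\hat x_1)$ for every $\alpha$, whence the $\liminf$ inequality upon passing to the limit.

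The main obstacle is not any single estimate but the bookkeeping: fitting the genuinely two-scale penalty $\f_3+\tfrac12\alpha d^2$ into the single-scale framework of \cite{CIL} by absorbing $\f_3$ into the objective, and, relatedly, making sure the near-maximizers do not escape toward $\partial B(0,4)$. This escape is exactly what the size hierarchy between $\f_3$ (exceeding $3C$ outside $B(0,2)^2$) and the uniform bound $C/1000$ on $u_*,u^*$ is designed to prevent; verifying this localization cleanly, so that the constrained suprema over the ball and over $M$ genuinely coincide, is where the care lies.
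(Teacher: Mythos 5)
Your proof is correct and takes essentially the same route as the paper, which disposes of this lemma by simply invoking \cite[Proposition 3.7]{CIL} after the localization $(x_\alpha,y_\alpha)\in B(0,2)^2$ has been secured; absorbing $\f_3$ into the upper semicontinuous objective and keeping $\frac{1}{2}\alpha d^2$ as the sole penalty is exactly how that proposition is meant to be applied here. Your write-up merely supplies the bookkeeping (identification of $\sup_{\Delta}U$ with $u_*(\hat x_1)-u^*(\hat x_1)$ via $\f_3\ge 0$ and $\f_3(\hat x_1,\hat x_1)=0$, and the saturation argument forcing $\f_3(\hat x,\hat x)=0$) that the paper leaves implicit.
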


Next, we use \cite[Theorem 3.2]{CIL} with
 $u_1=w_*$, $u_2=-w^*$, $\f=\frac{1}{2}\alpha d^2 +\f_3$. 
For $\alpha \gg 1$, everything is localized to $B(0,2)$ hence
$d$ reduces to  the euclidean distance function.
Using the usual formula for the first and second derivatives 
of its square,   
we get the following:

\begin{lem}\label{main}
$\forall \e >0$, we can find $(p_*, X_*), (p^*, X^*)\in \C^n \times Sym_{\R}^2 (\C^n)$
s.t.
\begin{enumerate}
 \item $(p_*, X_*)\in \overline{J^{2+} }w_*(x_{\alpha})$,
\item  $(-p^*, -X^*)\in \overline{J^{2-}}  w^*(y_{\alpha})$,
\item The block diagonal matrix with entries $(X_*,-X^*)$ satisfies:
$$
-(\e^{-1}+ \| A \| ) I \le 
\left(
\begin{array}{cc}
X_* & 0 \\
0 & -X^*
\end{array}
\right)
\le A+\e A^2, 
$$
where $A=D^2\f(x_{\alpha}, y_{\alpha})$, i.e.
$$A =\alpha
\left(
\begin{array}{cc}
I &  -I\\
-I &  I
\end{array}
\right) +D^2\f_3 (x_{\alpha}, y_{\alpha})$$
and $\| A \|$ is the spectral radius of $A$ (maximum of the absolute values for the eigenvalues of this symmetric
matrix). 
\end{enumerate}
\end{lem}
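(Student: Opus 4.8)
The plan is to read off the conclusion directly from the Crandall--Ishii--Lions ``theorem on sums'' \cite[Theorem 3.2]{CIL}, as already announced, applied to the pair $u_1=w_*$ and $u_2=-w^*$ on $B(0,2)\times B(0,2)\subset \C^n\times\C^n\cong\R^{2n}\times\R^{2n}$ with the test function $\f=\tfrac12\alpha d^2+\f_3$. First I would check that the hypotheses hold: $w_*$ is upper semicontinuous as a subsolution, $-w^*$ is upper semicontinuous since $w^*$ is lower semicontinuous as a supersolution, and $\f$ is of class $\mathcal{C}^2$ on the region where the relevant maximum lives. This last point is exactly what Lemma \ref{ineq} buys us: for $\alpha\gg1$ the maximizer $(x_{\alpha},y_{\alpha})$ of $w_*(x)-w^*(y)-\f(x,y)$ lies in $B(0,2)^2$, inside the ball where both $d^2$ and $\f_3$ are smooth, so that the function $w_*(x)+(-w^*)(y)-\f(x,y)$ attains a genuine local maximum at a point of differentiability of $\f$.

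Granting this, \cite[Theorem 3.2]{CIL} furnishes, for each $\e>0$, real symmetric matrices $X_*$ and $Y$ with $(D_x\f(x_{\alpha},y_{\alpha}),X_*)\in\overline{J^{2+}}\,w_*(x_{\alpha})$ and $(D_y\f(x_{\alpha},y_{\alpha}),Y)\in\overline{J^{2+}}\,(-w^*)(y_{\alpha})$, subject to the two-sided estimate $-(\e^{-1}+\|A\|)I\le\mathrm{diag}(X_*,Y)\le A+\e A^2$ with $A=D^2\f(x_{\alpha},y_{\alpha})$. Setting $p_*=D_x\f$ gives assertion (1). For the supersolution I would invoke the elementary involution $(q,Y)\in\overline{J^{2+}}(-w^*)\iff(-q,-Y)\in\overline{J^{2-}}(w^*)$, which converts the superjet of $-w^*$ into the subjet of $w^*$ required in (2); the first delicate bookkeeping point is to keep track of the signs of $q$ and $Y$ here, and in particular of whether $X^*$ or $-X^*$ is the matrix appearing both in the jet (2) and in the block-diagonal entry of (3).

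It remains to identify $A$, which is the only computation. Since $(x_{\alpha},y_{\alpha})\in B(0,2)^2$, the Riemannian distance $d$ agrees there with the flat Euclidean distance, so $d^2(x,y)=|x-y|^2$; differentiating twice, the real Hessian of $\tfrac12\alpha|x-y|^2$ on $\R^{2n}\times\R^{2n}$ is the constant block matrix $\alpha\left(\begin{smallmatrix}I&-I\\-I&I\end{smallmatrix}\right)$. Adding the smooth correction $D^2\f_3(x_{\alpha},y_{\alpha})$ yields $A=\alpha\left(\begin{smallmatrix}I&-I\\-I&I\end{smallmatrix}\right)+D^2\f_3(x_{\alpha},y_{\alpha})$, exactly as in (3).

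I do not anticipate any substantive obstacle: the statement is essentially a transcription of \cite[Theorem 3.2]{CIL} for the data at hand, and the interesting work lies upstream, in the construction of the metric and of the functions $\f_1,\f_2,\f_3$ and in Lemma \ref{ineq}, whose whole purpose is to force the maximizer into the flat region $B(0,2)^2$ so that $\f$ is smooth there and $d^2$ reduces to the flat metric. The only genuinely fiddly step is the sign bookkeeping in passing between $\overline{J^{2+}}(-w^*)$ and $\overline{J^{2-}}(w^*)$; everything else is the verbatim output of the theorem on sums together with the elementary Hessian computation above.
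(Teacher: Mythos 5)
Your proposal is correct and is exactly the paper's argument: the lemma is read off verbatim from \cite[Theorem 3.2]{CIL} applied to $u_1=w_*$, $u_2=-w^*$ and the test function $\f=\tfrac12\alpha d^2+\f_3$, after Lemma \ref{ineq} localizes the maximizer into $B(0,2)^2$ where $d$ is Euclidean and $\f$ is ${\mathcal C}^2$, followed by the elementary computation of $D^2\f$. The sign bookkeeping you flag is indeed the only subtlety (the consistent reading is that $-X^*$ is the matrix the theorem on sums attaches to $-w^*$, which is how item (3) is used downstream), and your treatment matches the paper's.
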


By construction, the Taylor series of $\f_3$ at any point in
 $\Delta \cap \{  \f_2 < -\eta \}$ vanishes up to order $2n$. By transversality, 
$\Delta \cap \{  \f_2 < -\eta \}$
is dense in $\Delta \cap \{  \f_2 \le -\eta \}$, and this
 Taylor series  vanishes up to order $2n$ on $\Delta \cap \{  \f_2 \le -\eta \}$.
In particular, $$ D^2\f_3 (x_{\alpha}, y_{\alpha}) =O(d(x_{\alpha}, y_{\alpha})^{2n})
=o(\alpha^{-n}).$$
This implies $\|A \|\simeq\alpha$. We chose $\alpha^{-1}= \e$ and deduce
$$
-(2\alpha ) I \le 
\left(
\begin{array}{cc}
X_* & 0 \\
0 & -X^*
\end{array}
\right)
\le 3\alpha \left(
\begin{array}{cc}
I &  -I\\
-I &  I
\end{array}
\right) + o(\alpha^{-n})
$$

Looking at the upper and lower diagonal terms we deduce that the eigenvalues
of $X_*, X^*$ are $O(\alpha)$. Evaluating the inequality on vectors of the form 
$(Z,Z)$ we deduce from the $\le$ that the eigenvalues
of $X_{*}-X^{*}$ are $ o(\alpha^{-n})$.

\smallskip

Fix $X\in Sym_{\R}^2 (\C^n)$ and denote by $X^{1,1}$ its $(1,1)$-part. It is a hermitian matrix. 
 Obviously the eigenvalues of $X_*^{1,1}, X^{*1,1}$ are $O(\alpha)$ and those
of $X_{*}^{1,1}-X^{*1,1}$ are $o(\alpha^{-n})$. 
Since $(p_*, X_*)\in \overline{J^{2+} }w_*(x_{\alpha})$ 
we deduce from the definition of viscosity solutions 
that $X_*^{1,1}$ is positive definite and that the product of its $n$
eigenvalues is $\ge c>0$ uniformly in $\alpha$. In particular its 
smallest eigenvalue
is $\ge c\alpha^{-n+1}$. The relation $ X_{*}^{1,1}+o(\alpha^{-n})\le X^{*1,1}$
forces $X^{*1,1} >0$ and $det(X^{*1,1})/det(X_{*}^{1,1})\ge 1 +o(\alpha^{-1})$. 

Now,  since $(p_*, X_*)\in \overline{J^{2+} }w_*(x_{\alpha})$ and  $(-p^*, -X^*)\in \overline{J^{2-}}  w^*(y_{\alpha})$ we get by definition of viscosity solutions:
$$
\frac{det(X^{*1,1})}{det(X_{*}^{1,1})}\le \frac{
 e^{w^*(y_{\alpha})} W(y_{\alpha})}{e^{w_*(x_{\alpha})} W(x_{\alpha})}
$$
Upon passing to the superior limit as $\alpha\to +\infty$,
 we get $1\le e^{\limsup w^*(y_{\alpha})-w_*(x_{\alpha}) }$. Taking
lemma \ref{ineq} into account
$w^*(\hat x)\ge w_*(\hat x)$ thus $u^*(\hat x)\ge u_*(\hat x)$.
\end{proof}

\begin{rem}
 The miracle with the complex Monge Amp\`ere equation 
we are studying is that
 the equation does not depend on the gradient
 in complex coordinates. In fact, it  takes the 
form $F(X)-f(x)=0$. The localisation technique would fail without this structural feature. 
\end{rem}

\subsection{Perron's method}

Once the global comparison principle holds, one easily constructs continuous (viscosity=pluripotential) solutions
by Perron's method as we now explain.

\begin{theo} \label{perron}
Assume the global comparison principle holds for $(DMA^1_v)$ and that $(DMA^1_v)$ has a bounded subsolution $\underline{u}$
and a bounded supersolution $\overline{u}$. 
Then, 
$$
\f=\sup\{ w \, | \,  \underline{u} \le w
\le \overline{u} \ \text{and} \ w \ \text{is a viscosity subsolution of } (DMA^{1}_v) \} 
$$
is the unique viscosity solution of $(DMA^1_v)$. 

In particular, it is a continuous $\omega$-plurisubharmonic function. Moreover $\f$ is also a solution of $(DMA^1_v)$
in the pluripotential sense.
\end{theo}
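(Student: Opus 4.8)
The plan is to realize Theorem~\ref{perron} as a standard application of Perron's method, the three ingredients being: the class of competitors is nonempty, the sup is a subsolution, and the sup is also a supersolution. First I would note that the competitor class is nonempty since $\underline{u}$ itself lies in it, and that every competitor $w$ satisfies $\underline{u}\le w\le\overline{u}$, so $\f$ is well defined and bounded with $\underline{u}\le\f\le\overline{u}$. To see that $\f$ is a subsolution, I would use the usc regularization $\f^*$ (the upper semicontinuous envelope) and invoke the stability of subsolutions under suprema: by Lemma~\ref{lem:maxsubsolution} (and its routine extension to arbitrary families via the standard Crandall--Ishii--Lions argument in \cite{CIL}) the usc envelope of a locally bounded family of subsolutions is again a subsolution. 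Since $\overline{u}$ is a supersolution, the comparison principle forces $\f^*\le\overline{u}$, and $\f^*\ge\underline{u}$ trivially, so $\f^*$ is itself an admissible competitor; by maximality $\f^*\le\f$, whence $\f=\f^*$ is usc and is a subsolution.

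The heart of the argument, and the step I expect to be the main obstacle, is showing that $\f$ (or rather its lsc envelope $\f_*$) is a supersolution. I would argue by contradiction in the usual bump-function fashion: if $\f_*$ fails to be a supersolution at some point $x_0$, there is a $\mathcal{C}^2$ test function $q$ touching $\f_*$ from below at $x_0$ with $q(x_0)=\f_*(x_0)$ and $F_+(q^{(2)}_{x_0})<0$. By continuity of $F_+$ one can then perturb $q$ upward slightly, say to $q+\delta$ on a small ball, to manufacture a subsolution that strictly exceeds $\f$ somewhere near $x_0$ while still lying below $\overline{u}$ (the latter using that $\f_*\le\f\le\overline{u}$ and that $q$ touches from below so $q$ stays under $\overline{u}$ near $x_0$). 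Taking the maximum of this bumped function with $\f$ produces a subsolution strictly larger than $\f$ at a point, contradicting the definition of $\f$ as the supremum of the competitor class. The delicate points here are ensuring the perturbed bump genuinely remains a subsolution in the viscosity sense on the small ball (using the structure of $(DMA^1_v)$ and positivity of $v$) and that one does not violate the upper barrier $\overline{u}$; both are handled by shrinking the ball and $\delta$.

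Once $\f$ is both a subsolution and a supersolution, it is by definition a viscosity solution of $(DMA^1_v)$, and viscosity solutions are automatically continuous, so $\f\in C^0(X)$. Uniqueness then follows immediately from the global comparison principle: if $\f_1,\f_2$ are two viscosity solutions, then comparing $\f_1$ (as subsolution) against $\f_2$ (as supersolution) gives $\f_1\le\f_2$, and symmetrically $\f_2\le\f_1$, so $\f_1=\f_2$. That $\f$ is $\omega$-plurisubharmonic follows from the preceding lemma identifying subsolutions with $\omega$-psh functions. Finally, to obtain that $\f$ solves the equation in the pluripotential sense, I would invoke the locally bounded case of the equivalence already established: by the lemma following Theorem~\ref{thm:visc=pluripot} together with Proposition~\ref{pro:visc=pluripot+tw}, a locally bounded function is a viscosity subsolution of $(DMA^{\e}_v)$ iff it is $\omega$-psh with $(\omega+dd^c\f)^n_{BT}\ge e^{\e\f}v$; the dual statement for supersolutions gives the reverse inequality $(\omega+dd^c\f)^n_{BT}\le e^{\e\f}v$, and combining the two yields the pluripotential equality $(\omega+dd^c\f)^n_{BT}=e^{\e\f}v$.
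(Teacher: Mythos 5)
Your Perron construction for the viscosity part is essentially the paper's own argument: the envelope is a subsolution because $F$ is lsc and subsolutions are stable under usc-regularized suprema (\cite{CIL}, Lemma 4.2), the lsc envelope $\f_*$ is shown to be a supersolution by the bump-function contradiction with a perturbation of the form $q+\delta-\gamma\|z\|^2$ on a small ball where $v>0$, and uniqueness plus $\f=\f_*=\f^*$ follow from the global comparison principle. The delicate points you flag there are exactly the ones the paper handles, and in the same way.

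The genuine gap is in your last paragraph. You obtain the pluripotential identity from a ``dual statement for supersolutions,'' namely that the bounded $\omega$-psh viscosity supersolution $\f$ satisfies $(\omega+dd^c\f)^n_{BT}\le e^{\f}v$. No such statement is proved in the paper, and it is not a formal dual of Proposition \ref{pro:visc=pluripot+tw}: the supersolution property only constrains $\f$ at points where a ${\mathcal C}^2$ function touches it from below, and for a psh function the Bedford--Taylor measure may a priori charge sets invisible to such tests. This is precisely the difficulty addressed in Lemma \ref{pro:super}, where the passage from viscosity supersolution to pluripotential supersolution is only achieved for ${\mathcal C}^2$ data via the psh projection, and the authors explicitly state they do not know whether it holds for less regular supersolutions. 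The paper circumvents this by balayage: one already has $(\omega+dd^c\f)^n_{BT}\ge e^{\f}v$ from the subsolution side; if equality failed on some ball $B$, solving the local Dirichlet problem produces a continuous psh $\psi$ on $\bar B$ with boundary values $\f$ solving the equation there, the pluripotential comparison principle gives $\psi\ge\f$ with $\psi\neq\f$, and then $\max(\f,\psi-t)$ for small $t>0$ is a viscosity subsolution exceeding $\f$ on an open set, contradicting maximality of the envelope. You should replace your dual-supersolution claim by an argument of this type.
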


\begin{proof} See
\cite{CIL} p. 22-24, with a grain of salt. Indeed, lemma 4.2 there implies that the upper 
enveloppe $\f$ of the subsolutions of $(DMA^1_v)$ is a 
subsolution of $(DMA^1_v)$ since $F$ is lsc. Hence $\f$ is a subsolution of $(DMA^1_v)_+$. 

The trick is now to consider its lsc enveloppe $\f_*$. We are going
to show that it is  a supersolution of $(DMA_v^1)$:
otherwise we find $x_0\in X$ and  $q$ a ${\mathcal C}^2$ function such that $\f_*-q$ 
has zero as a local minimum at $x_0$ and  $F_+(q^{(2)}_x )<0$.  
This forces $v_{x_0}>0$. 

Then proceeding as in loc.cit. p.24 
we can construct a subsolution $U$ such that $U(x_1)>\f(x_1)$ for some $x_1\in X$. 

This contradiction leads to the conclusion that $\f_*$ is a supersolution and, by the viscosity 
comparison principle, that $\f_*\ge \f$. Since $\f=\f^*\ge\f_*$ it follows that $\f=\f_*=\f^*$ 
is a continuous viscosity solution.

For the reader's convenience, we briefly summarize the construction of $U$. Let $(z^1, .., z^n)$ be a coordinate 
system centered at $x_0$ giving a local isomorphism with the complex unit ball and assume $v>0$ 
on this complex ball neighborhood. Then, for $\gamma, \delta, r>0$
small enough $q_{\gamma,\delta}= q +\delta-\gamma \|z\|^2$ satisfies $F_+(q_{\gamma,\delta}^{(2)} )<0$
for $\|z(x)\|\le r$.

Chose $\delta=(\gamma r^2)/8$, $r>0$ small enough. Since $\f_*(x)-q (x)\ge 0$ for $\|z (x) \|\le r$
we have $\f(x)\ge \f_*(x) > q_{\gamma,\delta}(x)$ if $r/2 \le \|z (x) \|\le r$. It follows that
$U$ defined by $$U(x)=\max(\f(x), q_{\delta, \gamma}(x))$$ if $\|z(x)\|\le r$ and $U(x)=\f(x)$ otherwise
is a subsolution of $(DMA_v^1)_+$ and in fact of $(DMA_v^1)$ since we may assume that $v>0$ on the
relevant part of $X$. Chose a sequence $(x_n)$ converging to $x_0$ so that $\f(x_n)\to \f_*(x_0)$. 
Then $q_{\gamma,\delta}(x_n) \to \f_*(x_0) +\delta$. Hence, for $n\gg 0$, $U(x_n)=q_{\gamma,\delta}(x_n)>\f(x_n)$.
 
It remains to be seen 
 that $\f$ is also a solution of $(DMA^1_v)$ in the pluripotential sense. 
It follows from the previous argument in pluripotential theory. 

In fact, since $\f$ is a viscosity subsolution, we know that 
$(\omega+dd^c\f)^n_{BT}\ge e^{\epsilon \f}v$ by Proposition \ref{pro:visc=pluripot+tw}. 
Now argue by  contradiction.
Namely choose $B \subset X$ a ball on which 
$(\omega+dd^c\f)^n_{BT}\not = e^{\epsilon \f }v$.
Solve a Dirichlet  problem to get a continuous psh function $\psi$ on  
$\bar B$ with $(\omega+dd^c \psi)^n_{BT}\not = e^{\epsilon \psi}v$
and $\psi=\f$ on $\partial B$. The Bedford Taylor comparison principle
gives $\psi \ge \f$ and $\psi\not=\f$ by hypothesis.
Also $\psi$ is a viscosity subsolution.  For $t>0$ small
enough $\f_0=\max( \f, \psi-t)$ is another viscosity subsolution
with $\f_0 > f$ on an open subset. This is contradiction to the definition
of $\f$ as an envelope. 
\end{proof}

In situations where the global comparison principle is not available, one can always
use the following substitute (this natural idea is used in the recent work \cite{HL}).

\begin{prop} \label{perron2}
Assume  that $(DMA^1_v)$ has a bounded subsolution $\underline{u}$
and a classical supersolution $\overline{u}$. 
Then, 
$$
\f=\sup\{ w \, | \,  \underline{u} \le w
\le \overline{u} \ \text{and} \ w \ \text{is a subsolution of } (DMA^{1}_v) \} 
$$
is the unique maximal viscosity subsolution of $(DMA^1_v)$. 

In particular, it is a bounded $\omega$-plurisubharmonic function.
\end{prop}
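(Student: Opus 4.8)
The plan is to mimic the first half of the proof of Theorem \ref{perron}, but to stop as soon as the upper envelope is shown to be a subsolution, since without the global comparison principle one cannot upgrade it to a genuine solution. First I would check that the family over which the supremum is taken is nonempty and that the definition is meaningful. Because $\overline{u}$ is a classical ($\mathcal{C}^2$) supersolution and $\underline{u}$ is a bounded subsolution, Lemma \ref{lem:classical} yields $\underline{u} \le \overline{u}$, so $\underline{u}$ itself belongs to the family; in particular $\f$ is well defined and $\underline{u} \le \f \le \overline{u}$.

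Next I would show that $\f$ is itself a subsolution. Since the supremum of subsolutions need not be upper semicontinuous, I would pass to the usc regularization $\f^*$. Each admissible $w$ satisfies $w \le \overline{u}$, and $\overline{u}$ is continuous, so $\f^* \le \overline{u}$; on the other hand $\f^* \ge \f \ge \underline{u}$. By the standard Perron lemma (\cite{CIL}, Lemma 4.2, applicable because $F^{1}_{v}$ is lower semicontinuous, as recorded in the remark following its definition) the function $\f^*$ is again a subsolution of $(DMA^1_v)$. Hence $\f^*$ belongs to the family, so $\f^* \le \f$; combined with the trivial $\f \le \f^*$ this forces $\f = \f^*$, i.e. $\f$ is usc and is a subsolution.

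It then remains to prove maximality. Let $w$ be any bounded viscosity subsolution. By Lemma \ref{lem:maxsubsolution} the function $\max(w,\underline{u})$ is again a subsolution, and it satisfies $\underline{u} \le \max(w,\underline{u})$; applying Lemma \ref{lem:classical} to this bounded subsolution against the classical supersolution $\overline{u}$ gives $\max(w,\underline{u}) \le \overline{u}$. Thus $\max(w,\underline{u})$ lies in the family and is therefore $\le \f$, whence $w \le \f$. Since $\f$ is itself a subsolution dominating every subsolution, it is the greatest one, hence the unique maximal viscosity subsolution. Finally, being a subsolution it is $\omega$-plurisubharmonic, hence bounded above on the compact $X$, while it is bounded below by the bounded function $\underline{u}$; so $\f$ is a bounded $\omega$-psh function.

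The only genuinely delicate point is the envelope step: the heart of Perron's method is that the usc regularization of a family of subsolutions is again a subsolution, which here rests on the lower semicontinuity of $F^{1}_{v}$. I expect the main subtlety to lie in correctly invoking this fact and in verifying that the envelope stays sandwiched between $\underline{u}$ and $\overline{u}$, rather than in any computation. Everything else is a direct consequence of the stability of subsolutions under maxima (Lemma \ref{lem:maxsubsolution}) and of the comparison against classical supersolutions already established in Lemma \ref{lem:classical}; it is precisely the failure of a supersolution companion statement, in the absence of the comparison principle, that prevents us from concluding as in Theorem \ref{perron} and leaves us only with the maximal subsolution.
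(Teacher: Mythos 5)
Your proof is correct and follows exactly the route the paper intends: Proposition \ref{perron2} is stated without an explicit proof, being the first half of the Perron argument of Theorem \ref{perron} (the usc regularization of the envelope is again a subsolution by \cite{CIL} Lemma 4.2 and the lower semicontinuity of $F$, and it stays below the continuous $\overline{u}$, hence coincides with the envelope) combined with the comparison against a classical supersolution from Lemma \ref{lem:classical} and the stability of subsolutions under maxima. The only point worth noting is that Lemma \ref{lem:classical} is stated under the hypothesis $v>0$, which Proposition \ref{perron2} therefore inherits implicitly (otherwise one cannot even guarantee $\underline{u}\le\overline{u}$, i.e.\ that the family is nonempty); with that understood, your argument is complete.
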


\begin{rem}
Assume $X$ is a complex projective manifold such that $K_X$ is ample. \
Let $\omega>0$ be a K\"ahler representative of $[K_X]$ and $v$
a volume form with $Ric(v)=-\omega$. Then the Monge-Amp\`ere equation 
$ (\omega+dd^c\f)^n =e^{\f} v$ satisfies all the hypotheses 
of Theorem \ref{perron} and has a unique viscosity solution $\f$. 
On the other hand, the Aubin-Yau theorem \cite{Aub,Yau} 
implies that it has a unique smooth solution $\f_{KE}$ (and $\omega+dd^c\f_{KE}$ is 
the canonical K\"ahler-Einstein metric on $X$). 
Uniqueness of the viscosity solution implies $\f=\f_{KE}$
hence the potential of the canonical KE metric on $X$ is the 
envelope of the (viscosity=pluripotential ) 
subsolutions to  $ (\omega+dd^c\f)^n =e^{\f} v$. 
\end{rem}

In the global case when $\e=0$, i.e. for $(\omega+dd^c \f)^n=v$ on a compact K\"ahler manifold, 
  classical strict sub/supersolutions do not exist and using directly  the Perron method seems doomed to failure.

\section {Regularity of potentials of singular K-E metrics}

In this section we apply the viscosity approach to show that the canonical 
singular K\"ahler-Einstein metrics
constructed in \cite{EGZ1} have continuous potentials.

\subsection{Manifolds of general type}

Assume $X$ is compact K\"ahler and $v$ is a continuous volume form with semi-positive density. 
Fix $\beta$ a K\"ahler form on $X$. We consider the following condition
on  (the cohomology class of) $\omega$: 
$$  
\exists \eta >0 \ \exists \psi \in L^{\infty}\cap PSH(X,\omega), \ (\omega + dd^c \psi)^n 
\ge\eta \beta^n.\leqno{(\dagger)}
$$

When $X$ is a compact K\"ahler manifold, $\omega$ is a semipositive $(1,1)$-form with 
$\int_{X}\omega^n>0$ then $(X,\omega)$ satisfies $(\dagger)$, as follows from \cite{EGZ1,BEGZ}.
However the latter articles rely on \cite{Aub,Yau} and we will show in the proof of 
Theorem \ref{thm:exp} how to check (\dag) directly, thus providing a new approach
to the ``continuous Aubin-Yau theorem''.

\smallskip

Note that the inequality can be interpreted in the pluripotential or viscosity sense since
these agree by Theorem \ref{thm:visc=pluripot}.  

\begin{lem} Assume $(\dag)$ is satisfied and $v$ has positive density.

If $c>>0$ is large enough, then $\psi-c$ is a subsolution of $(DMA^1_v)$ and 
the constant function $\f=c$ is a supersolution of $(DMA^1_v)$. 
\end{lem}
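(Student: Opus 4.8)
The plan is to verify the two assertions by a direct jet-test computation, handling the subsolution and the supersolution separately. Both reduce to checking the algebraic inequality defining $F^{\e}_v$ with $\e=1$, so throughout I work with the function $F$ from the definition of subsolution/supersolution and exploit the equivalence, established in Proposition \ref{pro:visc=pluripot+tw} and the subsolution lemma following Theorem \ref{thm:visc=pluripot}, between the viscosity and pluripotential formulations for locally bounded functions.

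First I would treat the constant $\f=c$. Since $c$ is smooth, it is a classical object and I only need $(\omega+dd^c c)^n=\omega^n\le e^{c}v$ pointwise. As $v$ has strictly positive continuous density on the compact $X$, there is a constant $v_0>0$ with $v\ge v_0\beta^n$, while $\omega^n$ is a fixed continuous form; hence choosing $c$ large enough that $e^{c}v_0\beta^n\ge \omega^n$ everywhere forces $(\omega+dd^c c)^n=\omega^n\le e^{c}v$, which is exactly the supersolution inequality $F_+(c^{(2)}_x)\ge 0$ at every point. Here I should be a little careful about whether $\omega\ge 0$: if $\omega$ is merely a closed real $(1,1)$-form one still has $\omega^n$ a continuous top form and the inequality $\omega^n\le e^c v$ as measures is what is needed, so the same choice of $c$ works.

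For the subsolution $\psi-c$, the key input is hypothesis $(\dag)$, which gives $\psi\in L^{\infty}\cap PSH(X,\omega)$ with $(\omega+dd^c\psi)^n\ge \eta\beta^n$ in the pluripotential sense. The function $\psi-c$ is again $\omega$-psh and locally bounded, so by the cited equivalence it suffices to check $(\omega+dd^c(\psi-c))^n_{BT}\ge e^{\psi-c}v$ in the pluripotential sense. Since $dd^c$ annihilates constants, the left side equals $(\omega+dd^c\psi)^n_{BT}\ge \eta\beta^n$, independently of $c$. On the right, $\psi$ is bounded above and $v$ has a bounded continuous density, so $e^{\psi-c}v\le e^{\sup\psi-c}\|f\|_{\infty}\beta^n$, and choosing $c$ large enough that $e^{\sup\psi-c}\|f\|_{\infty}\le \eta$ yields $e^{\psi-c}v\le \eta\beta^n\le (\omega+dd^c(\psi-c))^n_{BT}$, the required subsolution inequality.

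The main obstacle, such as it is, is bookkeeping rather than depth: one must ensure a single constant $c$ works simultaneously for both statements and is compatible with the sign/positivity conventions ($v>0$, boundedness of $\psi$, and whether $\omega\ge 0$ is available here, which it is since we are in the general-type setting of Section 3). The only genuinely substantive point is that the pluripotential inequality is invariant under adding constants to the potential on the left while it scales the right-hand side by $e^{-c}$; this monotonicity is what lets a large $c$ simultaneously push the subsolution's exponential term below the fixed positive lower bound $\eta\beta^n$ and push the supersolution's exponential term above the fixed form $\omega^n$.
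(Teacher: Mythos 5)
Your proof is correct and follows essentially the same route as the paper, which merely records in one sentence that $(\dag)$ supplies the subsolution while $\omega^n \le C\,v$ (available since $v>0$) supplies the supersolution; you have simply filled in the routine quantitative details (the choice of $c$ via $e^{\sup\psi-c}\|f\|_\infty\le\eta$ and $e^c v_0\beta^n\ge\omega^n$, and the pluripotential--viscosity equivalence for the bounded $\omega$-psh function $\psi-c$). No gaps.
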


\begin{proof}
Existence of $\psi$ is needed for the subsolution whereas the supersolution exists under the condition that 
$\exists C>0$ such that $\omega^n \le C.v$,
which follows here from our assumption that $v$ has positive density.
\end{proof}

\begin{coro} \label{cor:dagOK}
When $(\dag)$ is satisfied and $v$ is positive,
$(DMA^1_v)$ has a unique viscosity solution $\f$, 
which is also the unique solution in the pluripotential sense.
\end{coro}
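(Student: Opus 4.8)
The plan is to assemble the corollary from the three tools already in place: the preceding Lemma supplies the endpoints, Theorem \ref{qcp} supplies the comparison principle, and Theorem \ref{perron} (Perron's method) then manufactures the solution. First I would record that, since $(\dag)$ holds and $v$ has positive density, the preceding Lemma furnishes a bounded subsolution $\underline{u}=\psi-c$ of $(DMA^1_v)$ (bounded because $\psi\in L^{\infty}\cap PSH(X,\omega)$) together with a bounded supersolution $\overline{u}\equiv c$, for $c$ large enough. Next, since $v>0$, $\e=1>0$, $X$ is compact and $\omega$ is a closed real $(1,1)$-form, Theorem \ref{qcp} guarantees that the global viscosity comparison principle holds for $(DMA^1_v)$; in particular it forces $\underline{u}\le\overline{u}$, so the family over which the envelope below is taken is nonempty.

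Feeding these two facts into Theorem \ref{perron} then yields at once that
\[
\f=\sup\{\,w \mid \underline{u}\le w\le\overline{u}\ \text{and}\ w\ \text{is a viscosity subsolution of}\ (DMA^1_v)\,\}
\]
is the unique viscosity solution of $(DMA^1_v)$, that it is continuous and $\omega$-plurisubharmonic, and that it is simultaneously a pluripotential solution, i.e. $(\omega+dd^c\f)^n_{BT}=e^{\f}v$. This settles both the existence and the viscosity-uniqueness halves of the statement with no further work.

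It remains to prove uniqueness in the pluripotential sense, which I expect to be the only step requiring genuine (if routine) effort, since the viscosity/pluripotential dictionary of Section \ref{sec:local} is clean only on the subsolution side. Let $\phi\in L^{\infty}\cap PSH(X,\omega)$ be any pluripotential solution. I would compare it with $\f$ through the Bedford--Taylor comparison principle applied on the set $\{\f<\phi\}$: from $(\omega+dd^c\f)^n_{BT}=e^{\f}v$ and $(\omega+dd^c\phi)^n_{BT}=e^{\phi}v$ one gets
\[
\int_{\{\f<\phi\}}e^{\phi}\,v\le\int_{\{\f<\phi\}}e^{\f}\,v,\qquad\text{i.e.}\qquad\int_{\{\f<\phi\}}(e^{\phi}-e^{\f})\,v\le 0.
\]
On $\{\f<\phi\}$ the integrand is strictly positive, so positivity of $v$ forces $\{\f<\phi\}$ to be $v$-negligible, hence Lebesgue-negligible; as $\f$ is continuous and $\phi$ is upper semicontinuous this yields $\phi\le\f$ everywhere, and exchanging the roles of $\f$ and $\phi$ gives $\f=\phi$. (Alternatively, this is exactly the pluripotential uniqueness recorded in \cite{EGZ1}.) The strict positivity of $v$, which is what makes the above inequality strict, is precisely the crux of this final step.
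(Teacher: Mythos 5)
Your proposal is correct and follows exactly the paper's (one-line) argument: Lemma 3.1 provides bounded sub/supersolutions, Theorem \ref{qcp} gives the global comparison principle, and Theorem \ref{perron} produces the unique viscosity solution, which is also a pluripotential solution. The only addition is your explicit Bedford--Taylor argument for uniqueness in the pluripotential sense, which the paper leaves implicit (citing \cite{EGZ1}, Proposition 4.3, in the subsequent corollary); that argument is fine, with the minor remark that passing from ``$\phi\le\f$ a.e.'' to ``everywhere'' uses that $\phi$ is $\omega$-psh (sub-mean-value property), not merely upper semicontinuous.
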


\begin{proof}
Indeed the global comparison principle holds  and Theorem \ref{perron} enables to conclude. 
\end{proof}

 We are now ready to establish that the (pluripotential) solutions of some Monge-Amp\`ere
 equations constructed in \cite{EGZ1} are continuous:

\begin{theo} \label{thm:exp}
Assume $X$ is a compact K\"ahler manifold, $\omega$ is a semipositive $(1,1)$-form with 
$\int_{X}\omega^n>0$  and $v$ is a semi-positive  continuous 
probability measure on $X$. Then $(\dag)$ is satisfied and there exists a unique
continuous $\omega$-plurisubharmonic function $\f$ which is
the viscosity (equivalently pluripotential) solution to the 
degenerate complex Monge-Amp\`ere equation
$$
(\omega+dd^c\f)^n =e^{\f}v 
$$
\end{theo}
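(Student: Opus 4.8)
The plan is to separate three issues: verifying the structural condition $(\dagger)$, solving the equation when $v>0$ (which is then immediate from Corollary \ref{cor:dagOK}), and finally letting the density degenerate to obtain the semi-positive case together with the continuity of the solution.

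First I would check $(\dagger)$ \emph{directly}, i.e. without appealing to \cite{Aub,Yau}. Since $\b$ is K\"ahler, the form $\omega_\e:=\omega+\e\b$ is K\"ahler for every $\e>0$, so Proposition \ref{ay} grants the global comparison principle for $(\omega_\e+dd^c\f)^n=e^{\f}\b^n$; the constant functions provide a bounded sub- and supersolution (a large negative constant is a subsolution because $\omega_\e^n\ge c_\e\b^n>0$, a large positive one is a supersolution), so Theorem \ref{perron} yields a continuous solution $\psi_\e$ of $(\omega_\e+dd^c\psi_\e)^n=e^{\psi_\e}\b^n$. Comparison shows this family is monotone: if $\e'<\e$ then $\omega_\e\ge\omega_{\e'}$ makes $\psi_{\e'}$ an $\omega_\e$-subsolution, whence $\psi_{\e'}\le\psi_\e$ and $\psi_\e\downarrow\psi$ as $\e\downarrow0$. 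Testing the subsolution inequality against a constant at a maximum point gives the uniform upper bound $\sup_X\psi_\e\le C$; granting a uniform \emph{lower} bound $\psi_\e\ge-C_0$, the decreasing limit $\psi$ is bounded and $\omega$-psh, and Bedford--Taylor continuity of the Monge--Amp\`ere operator along decreasing sequences yields $(\omega+dd^c\psi)^n=e^{\psi}\b^n\ge e^{-C_0}\b^n$. This is exactly $(\dagger)$ with $\eta=e^{-C_0}$. Once $(\dagger)$ holds and $v>0$, Corollary \ref{cor:dagOK} gives the unique continuous viscosity (= pluripotential) solution.

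For the genuinely degenerate case I would regularize the density, setting $v_\e:=v+\e\b^n>0$. By the previous step each $(\omega+dd^c\f_\e)^n=e^{\f_\e}v_\e$ has a unique continuous solution $\f_\e$, and comparison again makes $\f_\e$ monotone increasing as $\e\downarrow0$ (smaller density forces larger solution); the fixed subsolution $\psi-c$ coming from $(\dagger)$ bounds the family uniformly from below, and a uniform $L^\infty$ bound together with the normalization $\int_X e^{\f_\e}v_\e=\int_X\omega^n$ bounds it from above. The increasing limit $\f$ is then a bounded $\omega$-psh function solving $(\omega+dd^c\f)^n=e^{\f}v$ in the pluripotential sense, and it is the unique such solution by the Bedford--Taylor comparison principle. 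To upgrade $\f$ (which is a priori only the upper semicontinuous solution) to a \emph{continuous} function I would use the viscosity supersolution technique highlighted in the introduction: show that the lower semicontinuous regularization $\f_*$ is a viscosity supersolution of $(DMA^1_{v_\e})$ for every $\e>0$, so that the comparison principle of Theorem \ref{qcp} (applicable since $v_\e>0$) gives $\f_\e\le\f_*\le\f$; combined with $\f_\e\uparrow\f$ this forces $\f=\f_*$, i.e. continuity.

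\textbf{Main obstacle.} Two a priori inputs carry the whole argument and are where the real work lies. The first is the uniform $L^\infty$ (lower) bound on $\psi_\e$ used to establish $(\dagger)$: this is precisely the Kolodziej-type $C^0$ estimate, which is of pluripotential nature and independent of \cite{Yau}, and is the content of the ``continuous Aubin--Yau theorem''; its uniformity as $\omega_\e\downarrow\omega$ degenerates is delicate because the reference density $\b^n/\omega_\e^n$ is not controlled where $\omega^n$ vanishes. The second is the bridge between the two notions of weak solution needed for continuity, namely that the lower semicontinuous regularization of a bounded pluripotential solution is a genuine viscosity supersolution; identifying the monotone limit of the $\f_\e$ with the pluripotential solution \emph{at every point} and closing the squeeze is the crux, and is exactly the step where the viscosity method improves on the purely pluripotential one.
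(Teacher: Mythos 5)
Your verification of $(\dagger)$ and your solution of the case $v>0$ follow essentially the paper's own route: approximate $\omega$ by the K\"ahler forms $\omega+\e\b$, use monotonicity of the solutions, a uniform $C^0$ bound, and Bedford--Taylor continuity along decreasing sequences; this is correct modulo the uniform lower bound you explicitly flag, which the paper imports from the pluripotential a priori estimates of \cite{EGZ1} (Proposition 1.2, Theorem 2.1 and Proposition 3.1 there), and which is indeed independent of \cite{Yau}. The genuine gap is in your last step, the passage from $v_\e=v+\e\b^n>0$ to $v\ge 0$. Your squeeze reads: comparison gives $\f_\e\le\f_*\le\f$, and ``combined with $\f_\e\uparrow\f$ this forces $\f=\f_*$''. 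But the pluripotential solution $\f$ is by construction the \emph{upper semicontinuous regularization} $\bigl(\lim_{\e\to 0}\f_\e\bigr)^*$ of the increasing limit, and an increasing sequence of (even continuous) $\omega$-psh functions is only known to converge to this regularization outside a negligible (pluripolar) set. So your squeeze yields $\f=\f_*$ quasi-everywhere, not everywhere, whereas continuity is precisely the assertion that the exceptional set is empty: asserting ``$\f_\e\uparrow\f$ at every point'' is essentially equivalent to what you are trying to prove, and the argument is circular at this point.

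There is also a secondary difficulty: you need $\f_*$ to be a viscosity supersolution of $(DMA^1_{v_\e})$. The paper's Lemma \ref{pro:super} (part 1) proves such a statement for $(dd^c\f)^n=v$ with \emph{continuous} density, but here the relevant density is $e^{\f}v_\e$ with $\f$ merely bounded and usc; the local comparison argument in that proof breaks down exactly at points where $\f(x_0)>\f_*(x_0)$, i.e.\ at the putative discontinuity points, because on small balls around $x_0$ one only controls $\sup e^{\f}$ by $e^{\f(x_0)}+o(1)$ and not by $e^{\f_*(x_0)}+o(1)$. The paper circumvents both problems quantitatively: using the capacity estimates of \cite{EGZ1} (Lemma 2.3, Propositions 2.6 and 3.3) it proves the stability inequality $\|\f_\e-\f_{\e'}\|_{L^\infty}\le C\,\|\f_\e-\f_{\e'}\|_{L^1}^{1/(n+2)}$, so that $(\f_\e)$ is uniformly Cauchy; the limit is then continuous as a uniform limit of continuous functions, and it is a viscosity solution by the stability result in \cite{CIL} (Remark 6.3). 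If you want to keep a supersolution-based argument closer to your idea, the right tool is not Theorem \ref{qcp} (which requires $v_\e>0$ and hence forces the defective squeeze), but the global comparison principle for $(DMA^1_v)$ with merely semi-positive $v$ established in the last section of the paper via the perturbation Lemma \ref{lemcle}: with that statement one can compare sub- and supersolutions for $v$ itself, which is what is needed to close the argument.
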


\begin{coro}
The function  $\f_P\in L^{\infty} \cap PSH(X,\omega)$ such that 
$$
(\omega+dd^c\f_P)^n =e^{\f_P}v 
$$
in the pluripotential sense constructed in \cite{EGZ1} Theorem 4.1 
is a viscosity solution, hence it is continuous. 
\end{coro}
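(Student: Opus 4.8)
The plan is to deduce the corollary directly from Theorem \ref{thm:exp} together with the uniqueness of the bounded pluripotential solution. The hypotheses imposed here---$X$ compact K\"ahler, $\omega$ semipositive with $\int_X \omega^n > 0$, and $v$ a continuous semi-positive probability measure---are precisely those of Theorem \ref{thm:exp}. First I would invoke that theorem to obtain a \emph{continuous} $\omega$-psh function $\f$ solving $(\omega+dd^c\f)^n = e^{\f} v$ both in the viscosity sense and in the pluripotential sense. It then suffices to show that this $\f$ agrees with the function $\f_P$ produced by \cite{EGZ1}, Theorem 4.1, for then $\f_P = \f$ is continuous and a viscosity solution, as asserted.

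To identify the two, I would note that $\f$ and $\f_P$ both lie in $L^{\infty}\cap PSH(X,\omega)$ and both satisfy $(\omega+dd^c\,\cdot\,)^n_{BT} = e^{\,\cdot\,}\,v$ in the pluripotential sense, so the matter reduces to the uniqueness of such a bounded solution---which is exactly what is asserted by the phrase ``the unique locally bounded $\omega$-psh function'' in \cite{EGZ1}. The underlying mechanism is the Bedford--Taylor comparison principle applied on the set $\{\f_P < \f\}$, yielding
$$
\int_{\{\f_P < \f\}} e^{\f}\,v \;\le\; \int_{\{\f_P < \f\}} e^{\f_P}\,v .
$$
Since $e^{\f} > e^{\f_P}$ on that set, the inequality forces it to be $v$-negligible, and symmetrically for $\{\f < \f_P\}$; the monotonicity of $t \mapsto e^{t}$ is what makes this twisted equation rigid.

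The one genuinely delicate point---and the step I would rely on \cite{EGZ1} to supply in full---is the passage from equality $v$-almost everywhere to equality on all of $X$, since $v$ may vanish on a nonpluripolar set and the naive comparison only controls the solutions where $v$ is positive. Once $\f_P = \f$ is established, there is nothing further to prove: $\f_P$ inherits continuity from $\f$ and is, by Theorem \ref{thm:exp}, a viscosity solution.
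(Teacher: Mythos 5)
Your proposal is correct and follows essentially the same route as the paper: the corollary is obtained by combining Theorem \ref{thm:exp} with the uniqueness of the bounded pluripotential solution (the paper cites \cite{EGZ1}, Proposition 4.3, for exactly the comparison-principle/domination argument you sketch, including the delicate point where $v$ vanishes). The only difference is one of presentation --- the paper folds the identification into the same proof block that establishes Theorem \ref{thm:exp} by approximation, whereas you cleanly separate the two steps.
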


\begin{proof} 
Observe that $(\dag)$ is obviously satisfied when the cohomology class
of $\omega$ is K\"ahler.
If moreover $v$ has positive density, the result is an immediate consequence of 
Corollary \ref{cor:dagOK} together with the unicity statement \cite{EGZ1} proposition 4.3.  

We treat the general case by approximation.
We first still assume that $v$ is positive but the cohomology class $\{\omega \}$ is 
now merely semi-positive and big (i.e. $\int_X \omega^n>0$). This is a situation considered in
\cite{EGZ1} where it is shown that $(\dag)$ holds, however we would like to make clear
that the proof is independent of \cite{Yau} so we (re)produce the argument.
By the above there exists, for each $0<\e \leq 1$, a unique continuous $(\omega+\e \beta)$-psh function
$u_{\e}$ such that
$$
(\omega+\e \beta+dd^c u_{\e})^n=e^{u_{\e}} v.
$$

We first observe that $(u_{\e})$ is relatively compact in $L^1(X)$.
By \cite{GZ1}, this is equivalent to checking that $\sup_X u_{\e}$ is bounded, as $\e \searrow 0^+$.
Note that
$$
e^{\sup_X u_{\e}} \geq \frac{\int_X \omega^n}{v(X)}=\int_X \omega^n
$$
hence $\sup_X u_{\e}$ is uniformly bounded from below. Set $w_{\e}:=u_{\e}-\sup_X u_{\e}$.
This is a relatively compact family of $(\omega+\beta)$-psh functions, hence there
exists $C>0$ such that for all $0<\e \leq 1$, $\int_X w_{\e} \, dv \geq -C$ \cite{GZ1}.
It follows from the concavity of the logarithm that
$$
\log \int_X (\omega+\beta)^n \geq \sup_X u_{\e}+\log \int_X (e^{w_{\e}} \, dv) \geq \sup_X u_{\e} -C.
$$
Thus $(\sup_X u_{\e})$ is bounded as claimed.

We now assert that $(u_{\e})$ is decreasing as $\e$ decreases to $0^+$. Indeed assume
that $0<\e' \leq \e$ and fix $\delta>0$. Note that $u_{\e'},u_{\e}$ are both $(\omega+\e \beta)$-plurisubharmonic.
It follows from the (pluripotential) comparison principle that
$$
\int_{(u_{\e'} \geq u_{\e}+\delta)} (\omega+\e \beta +dd^c u_{\e'})^n 
\leq \int_{(u_{\e'} \geq u_{\e}+\delta)} (\omega+\e \beta +dd^c u_{\e})^n .
$$
Since 
$$
(\omega+\e \beta +dd^c u_{\e'})^n \geq (\omega+\e' \beta +dd^c u_{\e'})^n
\geq e^{\delta} (\omega+\e \beta +dd^c u_{\e})^n
$$
on the set $(u_{\e'} \geq u_{\e}+\delta)$, this shows that the latter set has zero
Lebesgue measure. As $\delta>0$ was arbitrary, we infer $u_{\e'} \leq u_{\e}$.

We let $u=\lim_{\e \rightarrow 0} u_{\e}$ denote the decreasing limit of the functions $u_{\e}$.
By construction this is an $\omega$-psh function. It follows from Proposition 1.2, Theorem 2.1 and
Proposition 3.1 in \cite{EGZ1} that $u$ is bounded and (pluripotential) solution of the Monge-Amp\`ere equation
$$
(\omega+dd^c u)^n=e^u \, v.
$$
This shows that $(\dag)$ is satisfied hence we can use Corollary \ref{cor:dagOK} to conclude
that $u$ is actually continuous and that it is a viscosity solution.

\smallskip

It remains to relax the positivity assumption made on $v$. From now on
$\{\omega\}$ is semi-positive and big and $v$ is a probability measure
with semi-positive continuous density.
We can solve 
$$
(\omega+dd^c \f_{\e})^n=e^{\f_{\e}} [v+\e \b^n]
$$
where $\f_{\e}$ are continuous $\omega$-psh functions and $0<\e \leq 1$. Observe that
$$
e^{\sup_X \f_\e} \geq \frac{\int_X \omega^n}{1+\int_X \b^n}
$$
hence $\sup_X \f_{\e}$ is bounded below. 

It follows again from the concavity of the logarithm that
$M_{\e}:=\sup_X \f_{\e}$ is also bounded from above. Indeed set $\p_{\e}:=\f_{\e}-M_{\e}$.
This is a relatively compact family of non-positive $\omega$-psh functions \cite{GZ1}, thus there exists
$C >0$ such that $\int_X \p_{\e} (v+\beta^n) \geq -C$. Now
$$
\log \left( \int e^{\p_{\e}} \frac{v+\e \beta^n}{\int_X v+\e\beta^n} \right)  \geq \int \p_{\e} \frac{v+\e \beta^n}{\int_X v+\e\beta^n} 
\geq \int \p_{\e} (v+\beta^n)
\geq -C
$$
yields 
$$
\log \int_X \omega^n \geq M_{\e}+\log \left[1+\e \int_X \beta^n \right] -C
$$
so that $(M_{\e})$ is uniformly bounded.

We infer that $(\f_{\e})$ is relatively compact in $L^1(X)$. 
It follows from Proposition 2.6 and  Proposition 3.1
in \cite{EGZ1} that $(\f_{\e})$ is actually uniformly bounded,
as $\e$ decreases to zero. 

 Lemma 2.3
in \cite{EGZ1}, together with the uniform bound on $(\f_{\e})$ yields,
for any $0 <\delta << 1$,
\begin{eqnarray*}
Cap_{\omega}(\f_{\e}-\f_{\e'} < -2 \delta)
&\leq& \frac{C}{\delta^n} \int_{(\f_{\e}-\f_{\e'}<-\delta)} (\omega+dd^c \f_{\e})^n \\
&\leq& \frac{C}{\delta^{n+1}} \int_X |\f_{\e}-\f_{\e'}|(\omega+dd^c \f_{\e})^n \\
&\leq& \frac{C'}{\delta^{n+1}} \int_X |\f_{\e}-\f_{\e'}| (v+\b^n) 
\end{eqnarray*}

Using Proposition 2.6 in \cite{EGZ1} again and optimizing the value of $\delta$ yields
the following variant of Proposition 3.3, \cite{EGZ1},
$$
|| \f_{\e}-\f_{\e'} ||_{L^{\infty}} \leq C 
\left( || \f_{\e}-\f_{\e'}||_{L^1} \right)^{\frac{1}{n+2}}.
$$
Thus, if $(\epsilon_n)$ is a sequence decreasing
to zero as $n$ goes to $+\infty$ such that $(\f_{\e_n})_n$ converges in $L^1$,
 $(\f_{\e_n})$ is actually a Cauchy sequence of continuous functions,
 hence it uniformly converges,
 to the unique continuous pluripotential solution $\f$
of $(DMA_v^1)$. From this, it follows that
$(\f_{\e})$ has a unique cluster value 
in $L^1$ when $\epsilon$ decreases to $0$
hence converges in $L^1$. The preceding argument yields uniform convergence. 

Theorem \ref{perron}
insures that the $\f$ is also a viscosity subsolution. Remark
6.3 p. 35 in \cite{CIL} actually enables one to conclude
 that $\f$ is indeed a viscosity solution. 
\end{proof}

\begin{coro}
If $X^{can}$ is a canonical model of a general type projective manifold then the canonical singular KE metric
on $X^{can}$ of \cite{EGZ1} has continuous potentials. 
\end{coro}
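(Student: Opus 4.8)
The plan is to reduce the statement to Theorem~\ref{thm:exp} by passing to a resolution of singularities and then descending the continuity back to $X^{can}$. Recall that, since $X^{can}$ is the canonical model of a projective manifold of general type, it is a normal projective variety with at worst canonical singularities and with $K_{X^{can}}$ ample (\cite{BCHM}). First I would fix a resolution $\pi: X \to X^{can}$, with $X$ a smooth projective (hence K\"ahler) variety, and write $K_X = \pi^* K_{X^{can}} + \sum_i a_i E_i$, where the $E_i$ are the $\pi$-exceptional prime divisors and the discrepancies satisfy $a_i \ge 0$ precisely because the singularities are canonical.

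Next I would set up the Monge-Amp\`ere data on $X$. Choose a smooth K\"ahler form $\omega_{X^{can}}$ representing $c_1(K_{X^{can}})$ and set $\omega := \pi^* \omega_{X^{can}}$; this is a smooth closed semipositive $(1,1)$-form on $X$ with $\int_X \omega^n = (K_{X^{can}})^n > 0$. Let $\mu$ be the adapted volume form on $X^{can}$ with $\mathrm{Ric}(\mu) = -\omega_{X^{can}}$, so that the singular K\"ahler-Einstein equation $\mathrm{Ric}(\omega_{KE}) = -\omega_{KE}$ with $\omega_{KE} = \omega_{X^{can}} + dd^c\psi$ reads $(\omega_{X^{can}} + dd^c\psi)^n = e^{\psi} \mu$. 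Pulling back, $v := \pi^* \mu$ takes the form $\prod_i |s_i|^{2 a_i}$ times a smooth positive volume form, where $s_i$ is a defining section of $E_i$. The crucial point---and the place where the canonical-model hypothesis enters---is that $a_i \ge 0$ forces each factor $|s_i|^{2 a_i}$ to be continuous (vanishing along $E_i$ when $a_i > 0$); hence $v$ is a semipositive measure with continuous density and $\int_X v = \int_{X^{can}} \mu > 0$. After the harmless normalization $v \mapsto v/\int_X v$, absorbed by a constant shift of the potential, Theorem~\ref{thm:exp} applies on the compact K\"ahler manifold $X$ and yields a unique continuous $\omega$-psh solution $\varphi$ of $(\omega + dd^c\varphi)^n = e^{\varphi} v$.

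Finally I would descend $\varphi$ to $X^{can}$. The key observation is that on any positive-dimensional fiber $F = \pi^{-1}(p)$ one has $\omega|_F = (\pi^*\omega_{X^{can}})|_F = 0$, so $\varphi|_F$ is $\omega|_F$-psh, i.e. genuinely plurisubharmonic on the compact analytic set $F$, hence constant by the maximum principle. Since $\pi$ is a proper surjective morphism with connected fibers, it is a topological quotient map, so the fiberwise-constant continuous function $\varphi$ descends to a continuous $\psi$ on $X^{can}$ with $\varphi = \psi \circ \pi$; on the regular locus $\psi$ is $\omega_{X^{can}}$-psh, and by continuity and normality it is $\omega_{X^{can}}$-psh throughout. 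Thus $\omega_{X^{can}} + dd^c\psi$ is the singular K\"ahler-Einstein metric and its local potentials $h + \psi$ (for $h$ a local potential of $\omega_{X^{can}}$) are continuous. Matching this solution with the bounded potential solution constructed in \cite{EGZ1} through the uniqueness part of Theorem~\ref{thm:exp} identifies the two, completing the proof.

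I expect the only genuinely delicate step to be the verification that $v$ has continuous density: everything hinges on the inequality $a_i \ge 0$ characterizing canonical singularities, which is exactly what prevents the pulled-back Einstein volume form from blowing up. The descent step, though it is conceptually where the result is transferred from the resolution to $X^{can}$, is essentially formal once the fiberwise constancy of $\varphi$ has been observed.
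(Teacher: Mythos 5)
Your proposal is correct and takes essentially the same route as the paper: pass to a (log) resolution, note that $\omega=\pi^*\omega_{X^{can}}$ is semi-positive and big while the canonical singularities of $X^{can}$ force the pulled-back adapted volume form $v$ to have continuous semi-positive density, and then apply Theorem \ref{thm:exp}. The only difference is one of detail: the paper's two-line proof leaves the descent from the resolution back to $X^{can}$ implicit (it is built into the construction of \cite{EGZ1}), whereas you make it explicit via constancy of the potential on the connected fibers and the quotient-map property of $\pi$.
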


\begin{proof}
This is a straightforward consequence of the above theorem, working in  a log resolution of $X^{can}$,
where $\omega=c_1(K_X,h)$ is the pull-back of the Fubini-Study form from $X^{can}$
and $v=v(h)$ has continuous semi-positive density, since $X^{can}$ has canonical
singularities.
\end{proof}

\subsection{Continuous Ricci flat metrics}
 
We now turn to the study of  the degenerate equations $(DMA_v^0)$
$$
(\omega+dd^c \f)^n=v
$$
on a given compact K\"ahler manifold $X$. 
Here $v$ is a continuous volume form
with semipositive density and $\omega$ is a smooth semipositive closed real $(1,1)$ form on $X$.
We assume that $v$ is normalized so that 
$$
v(X)=\int_X \omega^n.
$$
This is an obvious necessary condition in order to solve the equation
$$
(\omega+dd^c \f)^n=v
$$
on $X$. Bounded solutions to such equations have been provided in \cite{EGZ1} when 
$v$ has $L^p$-density, $p>1$, by adapting the arguments of \cite{Kol}.
Our aim here is to show that these are actually {\it continuous}.
We treat here the case of continuous densities, as this is required in
the viscosity context, and refer the reader to section \ref{sec:moreflat}
for more general cases.

\begin{theo} \label{thm:flat}
The pluripotential solutions to $(DMA_v^0)$ are viscosity solutions, hence they are continuous.
\end{theo}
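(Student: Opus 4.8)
The plan is to treat the two halves of the equivalence separately: the subsolution half is essentially formal, while the supersolution half together with the continuity requires a genuine approximation argument, since at $\e=0$ neither the global comparison principle (Theorem \ref{qcp}) nor Perron's method (Theorem \ref{perron}) is available.

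Let $\f$ be a bounded $\omega$-psh pluripotential solution of $(DMA^0_v)$, so that $(\omega+dd^c\f)^n_{BT}=v$; such $\f$ exist by \cite{EGZ1}. That $\f$ is a viscosity subsolution is immediate: in a chart with a smooth local potential $h$ for $\omega$ as in Lemma \ref{hyp}, the function $\f+h$ is bounded, psh, and satisfies $(dd^c(\f+h))^n_{BT}\ge z_*v$, so Proposition \ref{pro:visc=pluripot} shows it is a viscosity subsolution, i.e. $\f$ is a viscosity subsolution of $(DMA^0_v)$. The whole difficulty is therefore to obtain the supersolution property together with continuity; note that $\f$, being $\omega$-psh, is a priori only upper semicontinuous, so it cannot even be tested as a supersolution until it is known to be continuous.

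To that end I would regularize the equation by restoring a positive exponent. For each $t>0$ I would consider
$$(\omega+dd^c u_t)^n=e^{t u_t}\,v ,$$
whose hypotheses are exactly those of Theorem \ref{thm:exp} with the exponent $1$ replaced by $t$; the same proof then yields a continuous $\omega$-psh viscosity solution $u_t$. Since $\int_X e^{t u_t}\,v=\int_X\omega^n=v(X)$, Jensen's inequality together with the Ko\l odziej--EGZ oscillation and $L^\infty$ a priori estimates used in the proof of Theorem \ref{thm:exp} (see also \cite{EGZ1}) bounds $\|u_t\|_{L^\infty}$ uniformly as $t\to0^+$. One then has to show that $(u_t)$ converges uniformly on $X$ as $t\to0^+$; the natural route is an $L^\infty$-stability estimate of the form
$$\|u_t-u_{t'}\|_{L^\infty}\le C\,\|u_t-u_{t'}\|_{L^1}^{1/(n+2)},$$
analogous to the one established for the cohomology perturbation at the end of the proof of Theorem \ref{thm:exp}, combined with $L^1$-compactness of $(u_t)$ via \cite{GZ1}. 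This uniform convergence is where I expect the real work to lie, precisely because all comparison tools degenerate as $t\to0$.

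Granting uniform convergence, set $u_0:=\lim_{t\to0^+}u_t$. Then $u_0$ is continuous, $e^{t u_t}\to1$ uniformly, and continuity of the Monge-Amp\`ere operator \cite{BT2} gives $(\omega+dd^c u_0)^n_{BT}=v$, so $u_0$ is a pluripotential solution of $(DMA^0_v)$. Moreover $u_0$ is a uniform limit of viscosity solutions of $(DMA^t_v)$, hence by the stability of viscosity sub- and supersolutions under uniform limits \cite{CIL} it is a viscosity solution of $(DMA^0_v)$; in particular it is continuous. Finally, bounded pluripotential solutions of $(DMA^0_v)$ are unique up to an additive constant (Bedford--Taylor comparison, \cite{BT2,EGZ1}), and adding a constant preserves both the equation and, because the exponent vanishes, the viscosity solution property. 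Therefore every pluripotential solution $\f$ equals $u_0+\mathrm{const}$ and is thus a continuous viscosity solution, as claimed. As indicated, the main obstacle is the uniform convergence of the regularized solutions $u_t$ at the degenerate exponent $t\to0$.
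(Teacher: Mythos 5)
Your proposal is correct and is essentially the paper's own proof: the paper likewise regularizes by solving $(\omega+dd^c \f_{\e})^n=e^{\e \f_{\e}} v$ for $\e>0$, obtains uniform $L^{\infty}$ bounds from the concavity of the logarithm together with the compactness results of \cite{GZ1}, and upgrades $L^1$-compactness to uniform convergence via Proposition 2.6 and a variant of Proposition 3.3 of \cite{EGZ1}, which is exactly the $L^\infty$-by-$L^1$ stability estimate you invoke. The uniform limit is then a continuous pluripotential solution (normalized by $\int_X \f\, dv=0$, hence identifying the limit despite uniqueness holding only up to constants) and is a viscosity solution by the stability of viscosity solutions under uniform limits (\cite{CIL}, Remark 6.3), so the step you flagged as the main obstacle is handled in the paper by precisely the tools you name.
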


The plan is to combine the viscosity approach for the family of equations
$(\omega+dd^c \f)^n=e^{\e \f} v$, together with the pluripotential tools developed in
\cite{Kol,Ceg2,GZ1,EGZ1,EGZ2}.

\begin{proof}
For $\e>0$ we let $\f_{\e}$ denote the unique viscosity (or equivalently pluripotential) $\omega$-psh 
continuous solution of the equation
$$
(\omega+dd^c \f_{\e})^n=e^{\e \f_{\e}} v.
$$
Set $M_{\e}:=\sup_X \f_{\e}$ and $\p_{\e}:=\f_{\e}-M_{\e}$. The latter form a relatively compact
family of $\omega$-psh functions \cite{GZ1}, hence there exists $C>0$ such that
$$
\int_X \p_{\e} dv =\int_X (\f_{\e}-M_{\e}) dv \geq -C, \text{ for all } \e>0.
$$
Observe that $M_{\e} \geq 0$ since $v(X)=\int_X \omega^n=:V$. The concavity of the logarithm yields
$$
0=\log \left( \int_X e^{\e \f_{\e}} \frac{dv}{V} \right) \geq \frac{1}{V} \int \e \f_{\e} dv 
$$
therefore
$$
0 \geq \int \f_{\e} dv \geq -C +V M_{\e}
$$
i.e. $(M_{\e})$ is uniformly bounded. We infer that $(\f_{\e})$ is relatively compact in $L^1$ and the Monge-Amp\`ere
measures $(\omega+dd^c \f_{\e})^n$ have uniformly bounded densities in $L^{\infty}$.
Once again Proposition 2.6 and (a variant of) Proposition 3.3 in \cite{EGZ1} show that this family of continuous
$\omega$-psh functions is uniformly Cauchy hence converges to a {\it continuous} pluripotential
solution of $(DMA_v^0)$.

This pluripotential solution is also a viscosity solution by \cite{CIL} Remark 6.3.

\smallskip

It is well-known that the solutions of $(DMA_v^0)$ are unique, up to an additive constant.
It is natural to wonder which solution is reached by the the family $\f_{\e}$. Observe that
$\int _X e^{\e \f_{\e}} dv=\int_X dv=\int _X \omega^n$ thus
$$
0=\int_X \frac{e^{\e \f_{\e}}-1}{\e} dv=\int_X \f_{\e} dv+o(1)
$$
hence the limit $\f$ of $\f_{\e}$ as $\e$ decreases to zero is the unique solution of $(DMA_v^0)$
that is normalized by $\int_X \f \, dv=0$.
\end{proof}

Note that the way we have produced solutions (by approximation through the non flat case)
is independent of \cite{Aub,Yau}.

\begin{coro}
Let $X$ be a compact $\Q$-Calabi-Yau K\"ahler space.
Then the Ricci-flat singular metrics constructed in \cite{EGZ1}, Theorem 7.5, have 
continuous potentials.
\end{coro}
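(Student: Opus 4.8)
The plan is to reduce the statement to Theorem \ref{thm:flat} by pulling everything back to a resolution of singularities, exactly as in the proof of the previous corollary for canonical models of general type. First I would choose a resolution $\pi\colon \tilde X \to X$; since $X$ is a compact K\"ahler space, $\tilde X$ may be taken to be a compact K\"ahler manifold, which is what Theorem \ref{thm:flat} requires. On $\tilde X$ I set $\omega=\pi^*\omega_X$, where $\omega_X$ is a fixed K\"ahler form on $X$. This $\omega$ is a smooth closed semipositive real $(1,1)$-form, and it is big in the relevant sense since $\int_{\tilde X}\omega^n=\int_X\omega_X^n>0$.

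Next I would describe the measure. Since $X$ is $\Q$-Calabi-Yau, a power $mK_X$ is trivial, and a trivializing section produces a canonical adapted measure $v_X$ on $X$. Because $X$ has canonical singularities, the pullback $v=\pi^*v_X$ is a volume form on $\tilde X$ with \emph{continuous semi-positive density}: near the exceptional locus it is comparable to $\prod_i|s_i|^{2a_i}$ times a smooth positive volume form, where the discrepancies satisfy $a_i\ge 0$, so the density is bounded, continuous, and vanishes exactly along the components of positive discrepancy. After a harmless normalization of $v_X$ I may arrange the compatibility condition $v(\tilde X)=\int_{\tilde X}\omega^n$, which is the obvious necessary condition for solvability of $(DMA_v^0)$.

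With this data, the Ricci-flat potential of \cite{EGZ1}, Theorem 7.5, is precisely the bounded $\omega$-psh function $\tilde\f$ solving $(\omega+dd^c\tilde\f)^n=v$ in the pluripotential sense on $\tilde X$. Theorem \ref{thm:flat} then applies verbatim and shows that $\tilde\f$ is in fact a viscosity solution, hence continuous on $\tilde X$. It remains to transfer continuity down to $X$: over the regular part $X_{reg}$ the map $\pi$ is a biholomorphism and the local potentials on $X$ differ from $\tilde\f$ by smooth functions, while across the singular locus continuity descends because $\pi$, being a proper surjective map of compact Hausdorff spaces, is a topological quotient map. Thus continuity of $\tilde\f$ forces the local potentials on $X$ to be continuous, which is the assertion.

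The substantive analytic content is entirely packaged in Theorem \ref{thm:flat}; the only point requiring genuine care is the verification that the canonical adapted measure has continuous semi-positive density on $\tilde X$, which is exactly where the canonical (as opposed to merely Kawamata log terminal) nature of the singularities of a $\Q$-Calabi-Yau space enters, through the sign condition $a_i\ge 0$ on the discrepancies. I expect this density computation, together with the descent of continuity through $\pi$, to be the main (though essentially routine) obstacle.
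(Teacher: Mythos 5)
Your proposal is correct and follows exactly the route the paper intends: the paper states this corollary without proof as an immediate consequence of Theorem \ref{thm:flat}, in complete analogy with its one-line proof of the preceding corollary on canonical models, namely pass to a resolution, pull back the K\"ahler form and the canonical adapted measure (whose density is continuous and semi-positive precisely because the singularities are canonical, $a_i\ge 0$), apply Theorem \ref{thm:flat}, and descend continuity through the proper surjection $\pi$. Your write-up simply makes explicit the details the paper leaves to the reader.
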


\section{Concluding remarks}

\subsection{The continuous Calabi conjecture}

The combination of viscosity methods and pluripotential techniques
yields a soft approach to solving degenerate complex Monge-Amp\`ere
equations of the form
$$
(\omega+dd^c \f)^n=e^{\e \f} v
$$
when $\e \geq 0$.

Recall that here $X$ is a compact K\"ahler n-dimensional manifold, 
$v$ is a semi-positive volume form with continuous density and $\omega$ is 
smooth closed real $(1,1)$-form whose cohomology class is  semi-positive and big
(i.e. $\{\omega\}^n>0$).

Altogether this provides an alternative and independent approach to Yau's solution of the Calabi conjecture \cite{Yau}:
we have only used upper envelope constructions (both in the viscosity and pluripotential sense), 
a global (viscosity) comparison principle  and Kolodziej's pluripotential 
techniques \cite{Kol, EGZ1}.

It applies to degenerate equations but yields solutions that are merely continuous (Yau's work yields
smooth solutions, assuming the cohomology class $\{\omega\}$ is K\"ahler and the measure $v$ is
both positive and smooth).

Note that a third (variational) approach has been studied recently in \cite{BBGZ}. It applies to even
more degenerate situations, providing solutions with less regularity (that belong to the so called
class of finite energy).

\subsection{More continuous solutions} \label{sec:moreflat}

Let $X$ be a compact K\"ahler manifold,
$v=f dV_0$ a non negative measure which is absolutely continuous with respect to some volume 
form $dV_0$ on $X$, and $\omega$ a smooth semi-positive closed real $(1,1)$-form on $X$
with positive volume.
We assume $v$ is normalized so that
$$
v(X)=\int_X \omega^n,
$$
where $n=\dim_{\C} X$.

When $\omega$ is K\"ahler, Kolodziej has shown in \cite{Kol} that there exists a
unique continuous $\omega$-plurisubharmonic function $\f$ such that
$$
(\omega+dd^c \f)^n=v
\text{ and } \int_X \f \, dV_0=0,
$$
as soon as the density $f$ is ``good enough'' (i.e. belongs to some Orlicz class, e.g.
$L^p, p>1$, is good enough).

This result has been extended to the case where $\omega$ is merely semi-positive
in \cite{EGZ1}, but for the continuity statement which now follows from
the viscosity point of view developed in the present 
article: it suffices to approximate the density $f$ by smooth positive densities
$f_{\e}$ (using normalized convolutions) and to show, as in the proof of Theorems
\ref{thm:exp}, \ref{thm:flat} that the corresponding continuous solutions form a Cauchy family
of continuous functions. We leave the details to the reader.

\subsection{The case of a big class}

Our approach applies equally well to a slightly more degenerate situation.
We still assume here that $(X,\omega_X)$ is a compact K\"ahler manifold of dimension $n$, but $v=fdV_0$ is merely assumed
to have density $f \geq 0$ in $L^{\infty}$ and moreover the smooth real closed $(1,1)$-form $\omega$ is no longer
assumed to be semi-positive: we simply assume that its cohomology class $\alpha:=[\omega] \in H^{1,1}(X,\R)$
is {\it big}, i.e. contains a K\"ahler current.

It follows from the work of Demailly \cite{Dem2} that one can find a K\"ahler current in $\alpha$ with analytic 
singularities: there exists an $\omega$-psh function $\p_0$ which is smooth in a Zariski open set
$\Omega_{\a}$ and has logarithmic singularities of analytic type along $X \setminus \Omega_{\a}=\{\p_0=-\infty\}$,
such that $T_0=\omega+dd^c \p_0 \geq \e_0 \omega_X$ dominates the K\"ahler form $\e_0 \omega_X$,
$\e_0>0$.

We refer the reader to \cite{BEGZ} for more preliminary material on this situation. Our aim here is to show that one can
solve $(DMA^{1}_v)$ in a rather elementary way by observing that the (unique) solution is the upper envelope of subsolutions.
We let 
$$
{\mathcal F}:=\left\{ \f \in PSH(X,\omega) \cap L^{\infty}_{loc}(\Omega_{\alpha}) \, / \, (\omega+dd^c \f)^n \geq e^{\f} v
\text{ in } \Omega_{\a} \right\}
$$
denote the set of all (pluripotential) subsolutions to $(DMA^{1}_v)$ (which only makes sense in $\Omega_{\a}$).

Observe that ${\mathcal F}$ is not empty: since $T_0^n$ dominates a volume form and $v$ has density in
$L^{\infty}$, the function $\p_0-C$ belongs to ${\mathcal F}$ for $C$ large enough. We assume for
simplicity $C=0$ (so that $\p_0 \in {\mathcal F}$) and set
$$
{\mathcal F}_0:=\{ \f \in {\mathcal F} \, / \, \f \geq \p_0 \}.
$$

\begin{prop}
The class ${\mathcal F}_0$ is uniformly bounded on $X$.

It is compact (for the $L^1$-topology) and convex.
\end{prop}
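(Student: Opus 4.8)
The plan is to treat the three assertions in increasing order of difficulty: convexity first, then the uniform bound, and finally compactness, where the only real subtlety lies.

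For convexity I would take $\f_1,\f_2\in{\mathcal F}_0$, $t\in[0,1]$, and set $\f=t\f_1+(1-t)\f_2$. Clearly $\f\in PSH(X,\omega)$, $\f$ is locally bounded on $\Omega_\alpha$, and $\f\ge\p_0$. To check the Monge-Amp\`ere inequality I would invoke the concavity of $A\mapsto(\det A)^{1/n}$ on nonnegative Hermitian matrices, in the form of the mixed inequality for Bedford--Taylor products \cite{BT2}: on $\Omega_\alpha$,
\[
(\omega+dd^c\f)^n\ge\bigl(t\,[(\omega+dd^c\f_1)^n]^{1/n}+(1-t)\,[(\omega+dd^c\f_2)^n]^{1/n}\bigr)^n .
\]
Feeding in $(\omega+dd^c\f_i)^n\ge e^{\f_i}v$ and using the convexity of $\exp$ (so that $t\,e^{\f_1/n}+(1-t)e^{\f_2/n}\ge e^{\f/n}$) yields $(\omega+dd^c\f)^n\ge e^{\f}v$, i.e. $\f\in{\mathcal F}_0$.

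For the uniform bound I would only use $f\in L^{\infty}$ and $v(X)=\int_X\omega^n>0$; since the lower bound $\f\ge\p_0$ is built into the definition, it suffices to bound $\sup_X\f$ from above. Integrating the subsolution inequality over $\Omega_\alpha$ and using that the total Bedford--Taylor (equivalently non-pluripolar) mass is at most $\mathrm{vol}(\alpha)$ \cite{BEGZ}, I get $\int_{\Omega_\alpha}e^{\f}\,v\le\mathrm{vol}(\alpha)$. Writing $d\mu=v/v(X)$ (a probability measure) and applying Jensen's inequality gives $\int_X\f\,d\mu\le C_1$. On the other hand, setting $\tilde\f=\f-\sup_X\f\le 0$, the $L^1$-compactness of $\{\p\in PSH(X,\omega):\sup_X\p=0\}$ \cite{GZ1} provides $\int_X\tilde\f\,dV_0\ge-C_0$; since $0\le f\le\|f\|_{\infty}$ and $\tilde\f\le 0$ this upgrades to $\int_X\tilde\f\,d\mu\ge-C_2$. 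Combining, $\sup_X\f=\int_X\f\,d\mu-\int_X\tilde\f\,d\mu\le C_1+C_2$, a bound independent of $\f$. Together with $\f\ge\p_0\in L^1(X)$ this shows ${\mathcal F}_0$ is bounded in $L^1(X)$.

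For compactness it then suffices to prove that ${\mathcal F}_0$ is closed in $L^1$, for it is contained in the $L^1$-compact set $\{\f\in PSH(X,\omega):\f\ge\p_0,\ \sup_X\f\le C\}$ (compactness of sublevel sets from \cite{GZ1}, the constraint $\f\ge\p_0$ being itself $L^1$-closed). The hard part is to pass the inequality $(\omega+dd^c\f)^n\ge e^{\f}v$ to an $L^1$-limit, since the Monge-Amp\`ere operator is not continuous along $L^1$-convergent sequences. My way around this is to reformulate the condition in viscosity terms. Given $\f_j\in{\mathcal F}_0$ with $\f_j\to\f$ in $L^1$, the functions $\f_j$ are uniformly bounded on each compact $K\Subset\Omega_\alpha$ (between $\min_K\p_0$ and $C$), so by Proposition \ref{pro:visc=pluripot+tw} each $\f_j$ is a viscosity subsolution of $(\omega+dd^c\f)^n=e^{\f}v$ on $\Omega_\alpha$. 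Using the sub-mean value property together with $L^1$-convergence, one checks that the relaxed upper limit $\limsup_{j\to\infty,\,y\to x}\f_j(y)$ coincides with $\f(x)$; the stability of viscosity subsolutions under relaxed upper limits \cite{CIL} then shows that $\f$ is again a subsolution on $\Omega_\alpha$ (it is not $\equiv-\infty$ since $\f\ge\p_0$). As $\f$ is locally bounded on $\Omega_\alpha$, Proposition \ref{pro:visc=pluripot+tw} converts this back into $(\omega+dd^c\f)^n\ge e^{\f}v$ in the pluripotential sense, and since $\f\ge\p_0$ we conclude $\f\in{\mathcal F}_0$. This closedness, combined with convexity and the ambient compactness, gives the statement, the viscosity stability step being the crux of the argument.
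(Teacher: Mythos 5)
Your convexity argument and your uniform bound are correct and essentially coincide with the paper's proof: the bound comes from the same combination of $\int_{\Omega_{\a}} e^{\f}\, v \le \mathrm{Vol}(\a)$ (monotonicity of non-pluripolar masses) and Jensen's inequality, and the convexity is exactly what the paper relegates to Dinew's mixed Monge--Amp\`ere inequalities. One citation caveat: the inequality
$$
(\omega+dd^c(t\f_1+(1-t)\f_2))^n\ge\bigl(t\,[(\omega+dd^c\f_1)^n]^{1/n}+(1-t)\,[(\omega+dd^c\f_2)^n]^{1/n}\bigr)^n
$$
for merely \emph{bounded} psh functions and Bedford--Taylor products is not in \cite{BT2}; at this level of generality it is Dinew's theorem \cite{Din}, which is the reference the paper actually gives.

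The compactness step, however, has a genuine gap. In this subsection the measure $v=f\,dV_0$ is only assumed to have density $f\ge 0$ in $L^{\infty}$, \emph{not} continuous, and the entire viscosity apparatus you invoke is unavailable in that generality: the very notion of a viscosity subsolution of $(\omega+dd^c\f)^n=e^{\f}v$ requires testing the pointwise value $v_{x_0}$, which is meaningless for an $L^{\infty}$ class; Proposition \ref{pro:visc=pluripot+tw} is stated and proved under the standing hypothesis of Section \ref{sec:local} that the density is continuous (its proof uses sup-convolutions and the regularizations $f_{\de}(x)=\inf\{f(y):|y-x|\le\de\}$, which behave badly for discontinuous $f$); and the stability of viscosity subsolutions under relaxed upper limits in \cite{CIL} needs the Hamiltonian to be at least lower semicontinuous in $x$, which again fails here. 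This is precisely why the paper treats the big-class case by purely pluripotential means. The repair is the paper's argument: after extracting a subsequence with $\p_j\to\p$ in $L^1$ and almost everywhere, set $\p_j':=(\sup_{l\ge j}\p_l)^*$; these functions belong to ${\mathcal F}_0$ and decrease to $\p$, and Demailly's inequality for maxima of psh functions gives $(\omega+dd^c\p_j')^n\ge e^{\inf_{l\ge j}\p_l}\,v$ on $\Omega_{\a}$. Since everything is locally bounded there, Bedford--Taylor continuity of the Monge--Amp\`ere operator along decreasing sequences, together with monotone convergence of $e^{\inf_{l\ge j}\p_l}$, yields $(\omega+dd^c\p)^n\ge e^{\p}v$, i.e.\ $\p\in{\mathcal F}_0$. (Your identification of the relaxed upper limit with the $L^1$-limit, via Hartogs' lemma and the sub-mean value property, is correct, so your route would indeed work if the density were continuous --- but that is not the setting of this proposition.)
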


\begin{proof}
We first show that ${\mathcal F}_0$ is uniformly bounded from above (by definition it is
bounded from below by $\p_0$). We can assume without loss of generality that $v$ is
normalized so that $v(X)=1$. Fix $\p \in {\mathcal F}_0$. It follows from the convexity
of the exponential that
$$
\exp \left( \int \p dv \right) \leq \int e^{\p} dv \leq \int (\omega+dd^c \p)^n \leq Vol(\a).
$$
All integrals here are computed on the Zariski open set $\Omega_{\a}$.
We refer the reader to \cite{BEGZ} for the definition of the volume of a big class.

We infer
$$
\sup_X \p \leq \int \p dv +C_v \leq \log Vol(\a)+C_v,
$$
where $C_v$ is  a uniform constant that only depends on the fact that all $\omega$-psh functions are integrable
with respect to $v$ (see \cite{GZ1}). This shows that ${\mathcal F}_0$ is uniformly bounded from
above by a constant that only depends on $v$ and $Vol(\a)$.

We now check that ${\mathcal F}_0$ is  compact for the $L^1$-topology.
Fix $\p_j \in {\mathcal F}_0^{\N}$. We can extract a subsequence
that converges in $L^1$ and almost everywhere to a function $\p \in PSH(X,\omega)$.
Since $\p \geq \p_0$, it has a well defined Monge-Amp\`ere measure in $\Omega_{\a}$
and we need to check that $(\omega+dd^c \p)^n \geq e^{\p} v$.

Set $\p_j':=(\sup_{l \geq j} \p_l )^*$. These are functions in ${\mathcal F}_0$ which decrease to $\p$.
It follows from a classical inequality due to Demailly that
$$
(\omega+dd^c \p_j')^n \geq e^{\inf_{l \geq j} \p_l} v
$$
Letting $j \rightarrow +\infty$ shows that $\p \in {\mathcal F}_0$, as claimed.

The convexity of ${\mathcal F}_0$ can be shown along the same lines. We won't need it here
so we let reader check that this easily follows from the inequalities obtained in
\cite{Din}.
\end{proof}

It follows that
$$
\p:=\sup\{ \f \, / \,  \f \in {\mathcal F}_0 \},
$$
the upper envelope of pluripotential subsolutions to $(DMA_v^1)$, is a well defined $\omega$-psh function
which is locally bounded in $\Omega_{\a}$.

\begin{theo}
The function $\p$ is a pluripotential solution to $(DMA_v^1)$.   
\end{theo}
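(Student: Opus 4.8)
The plan is to prove the two pluripotential inequalities $(\omega+dd^c\p)^n \ge e^{\p} v$ and $(\omega+dd^c\p)^n \le e^{\p} v$ on the Zariski open set $\Omega_{\a}$ separately; together they give the desired equality. Both make sense because $\p$ is locally bounded on $\Omega_{\a}$, and there is nothing to check along $X\setminus\Omega_{\a}$.

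First I would show that the envelope is itself a subsolution, i.e. $\p\in{\mathcal F}_0$ and in particular $(\omega+dd^c\p)^n \ge e^{\p} v$. By Choquet's lemma there is a countable family $\f_j\in{\mathcal F}_0$ with $(\sup_j \f_j)^*=\p$; replacing $\f_j$ by $\max(\f_1,\dots,\f_j)$, which still lies in ${\mathcal F}_0$ (the maximum of finitely many subsolutions is a subsolution by Lemma \ref{lem:maxsubsolution} together with the equivalence between the viscosity and pluripotential notions), I may assume the $\f_j$ increase to $\p$ off a negligible set. Bedford--Taylor continuity of the Monge--Amp\`ere operator along increasing sequences of locally bounded psh functions \cite{BT2} gives $(\omega+dd^c\f_j)^n \to (\omega+dd^c\p)^n$ weakly, while $e^{\f_j}v \nearrow e^{\p}v$ by monotone convergence; passing to the limit in the inequality $(\omega+dd^c\f_j)^n \ge e^{\f_j}v$ between positive measures yields $(\omega+dd^c\p)^n \ge e^{\p}v$, so $\p\in{\mathcal F}_0$.

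For the reverse inequality I would argue by contradiction, using balayage and the maximality of $\p$. If $(\omega+dd^c\p)^n \not\le e^{\p} v$, there is a ball $B\Subset \Omega_{\a}$ on which $\p$ is bounded and $(\omega+dd^c\p)^n$ strictly dominates $e^{\p} v$ on a set of positive measure. Writing $\omega=dd^c h$ with $h$ smooth on $B$ and setting $\tilde v:=e^{-h}v$, the function $\p+h$ is a bounded subsolution of the local equation $(dd^c u)^n=e^{u}\tilde v$. Solving the corresponding Dirichlet problem on $B$ with boundary data $\p+h$ produces $u$ with $(dd^c u)^n=e^{u}\tilde v$ on $B$ and $u=\p+h$ on $\partial B$; the comparison principle (Lemma \ref{cpd}, in the form adapted to the monotone right-hand side $e^{u}$) gives $u\ge \p+h$, i.e. $\tilde\p:=u-h\ge \p$, with strict inequality somewhere since otherwise $(\omega+dd^c\p)^n=e^{\p}v$ would hold on $B$. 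Gluing $\tilde\p$ on $B$ with $\p$ on $X\setminus B$ then produces an $\omega$-psh function $\Psi\ge \p_0$ which is a subsolution of $(DMA_v^1)$ on $\Omega_{\a}$ and satisfies $\Psi>\p$ at some point, contradicting $\p=\sup\{\f\,|\,\f\in{\mathcal F}_0\}$.

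The routine points — that $\max$ preserves membership in ${\mathcal F}_0$, that the glued function is $\omega$-psh (using $\tilde\p=\p$ on $\partial B$ and $\tilde\p\ge\p$ on $B$) and is still a subsolution dominating $\p_0$ — I would dispatch quickly. The main obstacle is the balayage step: one must solve the twisted local Dirichlet problem $(dd^c u)^n=e^{u}\tilde v$ with only an $L^{\infty}$ density and merely upper semicontinuous (bounded) boundary data, and control it by a comparison principle that accounts for the factor $e^{u}$. I expect to handle existence and the required comparison through Kolodziej's continuous solutions to the Dirichlet problem \cite{Kol} together with a monotonicity argument in $u$, the strict-increase assertion then following from the strong form of the comparison principle.
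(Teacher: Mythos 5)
Your overall strategy --- Choquet's lemma plus stability of subsolutions under maxima to get the subsolution property of $\p$, then a local balayage on a small ball to force equality --- parallels the paper's proof, which however organizes the balayage differently: instead of arguing by contradiction, the paper replaces each approximant $\p_j$ by the solution $\p_j'$ of the local Dirichlet problem with boundary data $\p_j$, notes that the $\p_j'$ are still subsolutions lying below $\p$ and increasing in $j$, and concludes directly from the continuity of the Monge--Amp\`ere operator along increasing sequences. That version needs no strict-increase step and no contradiction; on the other hand the solvability of the local Dirichlet problem with $L^\infty$ density and merely u.s.c.\ bounded boundary data is required in both arguments (the paper defers it to an adaptation of Bedford--Taylor and Cegrell, you defer it to Kolodziej plus monotonicity), so that reliance is shared rather than a defect specific to your route.

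The genuine gap is your justification of max-stability. You invoke Lemma \ref{lem:maxsubsolution} ``together with the equivalence between the viscosity and pluripotential notions'', but that equivalence (Propositions \ref{pro:visc=pluripot} and \ref{pro:visc=pluripot+tw}, Theorem \ref{thm:visc=pluripot}) is proved under the standing assumption of Section \ref{sec:local} that $v$ has \emph{continuous} density. In the big-class setting of this theorem $v=f\,dV_0$ with merely $f\in L^{\infty}$, and the viscosity framework is not even available: the definition of a viscosity subsolution tests against the pointwise value $v_{x_0}$, which is meaningless for an $L^{\infty}$ density. This is precisely why this subsection of the paper works purely pluripotentially. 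The fact you need --- that the maximum of two pluripotential subsolutions of $(\omega+dd^c\f)^n\ge e^{\f}v$ is again one --- is true, but it is exactly the point where the paper has to work: it sets $W_c:=\max(w_1+c,w_2)$, uses $(\omega+dd^cW_c)^n \ge {\bf 1}_{\{w_1+c>w_2\}}(\omega+dd^cw_1)^n + {\bf 1}_{\{w_1+c<w_2\}}(\omega+dd^cw_2)^n \ge {\bf 1}_{\{w_1+c\neq w_2\}}e^{W_c}v$, observes that the contact sets $\{w_1+c=w_2\}$ have zero $v$-measure for all but countably many $c$, and lets $c$ decrease to $0$ using continuity along decreasing sequences. (Alternatively one may quote the Demailly-type inequality $(\omega+dd^c\max(u,w))^n \ge {\bf 1}_{\{u\ge w\}}(\omega+dd^cu)^n + {\bf 1}_{\{u<w\}}(\omega+dd^cw)^n$, which handles the contact set directly.) Once this step is repaired by a pluripotential argument, the rest of your proof --- passing to the limit in the subsolution inequality, and the contradiction obtained by gluing a strictly larger subsolution, where your observation that $\tilde\p\equiv\p$ on $B$ would force equality of the measures on $B$ correctly yields the strict increase --- does go through.
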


\begin{proof}
In the sequel we shall say (for short) that an $\omega$-psh function $\f$ is bounded iff it is locally bounded
in the Zariski open set $\Omega_{\a}$.

  By Choquet's lemma, we can
find a sequence $\p_j \in {\mathcal F}_0$ of bounded $\omega$-psh (pluripotential) subsolutions
such that 
$$
\p=\left( \lim_{j \rightarrow +\infty} \p_j \right)^*.
$$

Observe that the family of bounded pluripotential subsolutions is stable under taking maximum:
assume $w_1,w_2$ are two such subsolutions and set $W_c:=\max(w_1+c,w_2)$, then
the (pluripotential and local) comparison principle yields
\begin{eqnarray*}
(\omega+dd^c W_c)^n &\geq& {\bf 1}_{\{ w_1+c>w_2\}} (\omega+dd^c w_1)^n
+{\bf 1}_{\{ w_1+c<w_2\}} (\omega+dd^c w_2)^n \\
&\geq&{\bf 1}_{\{ w_1+c \neq w_2\}} e^{W_c} v
\end{eqnarray*}
Now for all but countably many $c$'s, the sets $(w_1+c=w_2)$ have zero
$v$-measure, thus by continuity of the Monge-Amp\`ere operator under
decreasing sequences, we infer
$$
(\omega+dd^c \max[w_1,w_2])^n \geq e^{\max[w_1,w_2]} v.
$$

This shows that we can assume the $\p_j$'s form an increasing sequence
of subsolutions. Finally we use a local balayage procedure to show that
$\p$ is indeed a pluripotential solution to $(DMA_v^1)$. Fix $B$ an 
arbitrary small ``ball'' in $X$ (image of a euclidean ball under a local
biholomorphism) and let $\p_j'$ denote the solution of the local Dirichlet
problem
$$
(\omega+dd^c \p_j')^n=e^{\p_j'} v \text{ in } B
\; \;
\text{ and }
\; \;
\p_j' \equiv \p_j \text{ on } \partial B.
$$
We extend $\p_j'$ to $X$ by setting $\p_j' \equiv \p_j$ in $X \setminus B$.

That such a problem indeed has a solution follows from an adaptation
of the corresponding ``flat'' Dirichlet problem of Bedford and Taylor
\cite{BT1,BT2}, as was considered by Cegrell \cite{Ceg1}.

Note that the $\p_j'$ are still subsolutions. It follows from the (pluripotential) comparison
principle that $\p_j' \geq \p_j$ and $\p_{j+1}' \geq \p_j'$.
Thus the increasing limit of the $\p_j'$s equals again $\p$.
Since the Monge-Amp\`ere operator is continuous under increasing sequences \cite{BT2},
this shows that $\p$ is a pluripotential solution of $(DMA_v^1)$ in $B$, hence
in all of $X$, as $B$ was arbitrary.
\end{proof}

\begin{rem}
The situation considered above covers in particular the construction
of a K\"ahler-Einstein current on  a variety $V$ with
ample canonical bundle $K_V$ and canonical singularities, since the
canonical volume form becomes, after passing to a desingularisation $X$, a volume form
$v=f dV_0$ with density $f \in L^{\infty}$.

The more general case of log-terminal singularities yields density $f \in L^p$, $p>1$.
One can treat this case by an easy approximation argument:
setting $f_j=\min (f,j) \in L^{\infty}$, one first solves 
$(\omega+dd^c \f_j)^n=e^{\f_j} f_j dV_0$ and observe (by using the comparison principle)
that the $\f_j's$ form a decreasing sequence which converges to the unique
solution of $(\omega+dd^c \f)^n=e^{\f} f dV_0$.

Once again the problem $(DMA_v^0)$ can be reached by first solving 
$(DMA_v^{\e})$, $\e>0$, and then letting $\e$ decrease to zero.
\end{rem}

\subsection{More comparison principles}

Let again $B \subset \C^n$ denote the open unit ball
 and let $B'=(1+\eta)B$ with $\eta>0$
be a slightly larger open ball. Let $u, u'\in PSH(B')$ be  
plurisubharmonic functions. By convolution with an adequate non negative kernel
of the form
$\rho_{\epsilon}(z)=\epsilon^{-2n}\rho_1(\frac{z}{\epsilon}) $
we construct $(u_{\epsilon})_{\eta>\epsilon>0}$ a family of smooth
plurisubharmonic functions decreasing to $u$ as $\epsilon$ decreases to $0$.

\begin{lem}\label{mol}
$$ \forall z \in B \  u (z) + u'(z)=
\lim_{n\to \infty} \sup\{ u'(x)+u_{1/j}(x) | j\ge n, \  |x-z|\le 1/n \}
$$
\end{lem}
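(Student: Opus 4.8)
The plan is to exploit monotonicity in both the convolution parameter and the radius, thereby reducing the double supremum to a single one. First I would observe that, since $u_{1/j}$ decreases as $j$ increases, for each fixed $x$ the inner supremum over $j$ is attained at $j=n$, i.e. $\sup_{j\ge n}u_{1/j}(x)=u_{1/n}(x)$. Hence the right-hand side collapses, for $n$ large enough that all the mollifications involved are defined on the relevant balls, to
$$
S_n(z):=\sup_{|x-z|\le 1/n}\bigl(u'(x)+u_{1/n}(x)\bigr).
$$
Because shrinking the admissible set of pairs $(j,x)$ can only decrease the supremum, $S_n(z)$ is nonincreasing in $n$, so the limit defining the right-hand side exists in $\R\cup\{-\infty\}$, and it remains to identify it with $u(z)+u'(z)$.

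For the lower bound I would simply test the supremum at $x=z$ and $j=n$: since $u_{1/n}\ge u$ (the family $u_{\epsilon}$ decreases to $u$), one gets $S_n(z)\ge u'(z)+u_{1/n}(z)\ge u'(z)+u(z)$ for every $n$, whence $\liminf_{n}S_n(z)\ge u(z)+u'(z)$. This already disposes of the degenerate cases $u(z)=-\infty$ or $u'(z)=-\infty$, where the asserted identity reads as an equality of quantities both equal to $-\infty$, the mollifications $u_{1/m}(z)$ being finite.

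The upper bound is where the two limiting procedures must be disentangled. Fix $m$ and restrict to $n\ge m$; then $u_{1/n}\le u_{1/m}$ pointwise, so $S_n(z)\le\sup_{|x-z|\le 1/n}h_m(x)$, where $h_m:=u'+u_{1/m}$ is upper semicontinuous, being the sum of the u.s.c. function $u'$ and the continuous function $u_{1/m}$. For any u.s.c. function $h$ one has $\inf_{r>0}\sup_{|x-z|\le r}h(x)=h(z)$, so letting $n\to\infty$ with $m$ fixed gives $\limsup_{n}S_n(z)\le h_m(z)=u'(z)+u_{1/m}(z)$. Letting finally $m\to\infty$ and using $u_{1/m}(z)\downarrow u(z)$ yields $\limsup_{n}S_n(z)\le u(z)+u'(z)$, which together with the lower bound proves the identity.

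The main obstacle is precisely the coupling of the shrinking ball of radius $1/n$ with the convolution index $j\ge n$: a naive estimate would leave an uncontrolled diagonal limit. The decoupling observation that the inner supremum over $j$ is attained at $j=n$ by monotonicity is what makes the argument elementary; after that reduction the only genuine inputs are the upper semicontinuity of $u'$ and the monotone convergence $u_{1/m}\downarrow u$.
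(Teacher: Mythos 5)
Your proof is correct, and while it shares the paper's sandwich skeleton (the lower bound is identical: test at $x=z$, using $u_{1/n}\ge u$), your upper bound decouples the double limit in a genuinely different way, and that is the only nontrivial point of the lemma. The paper bounds $u'(x)+u_{1/j}(x)$, for $j\ge n$ and $|x-z|\le 1/n$, by $\sup\{(u'+u)(w)\,:\,|w-z|\le 2/n\}$, i.e.\ it absorbs the convolution radius into an enlarged ball; this step implicitly uses $u'\le u'_{1/j}$ (the sub-mean value property of the \emph{plurisubharmonic} function $u'$) together with linearity of convolution, and then concludes by upper semicontinuity of the sum $u+u'$. You instead freeze the mollification index: for $n\ge m$ you dominate $u_{1/j}\le u_{1/m}$, invoke upper semicontinuity of $u'+u_{1/m}$ to let $n\to\infty$ at fixed $m$, and only then let $m\to\infty$ by monotone convergence. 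This buys something: your argument never mollifies $u'$, so it only uses that $u'$ is upper semicontinuous (plus the monotone smoothing of $u$), whereas the paper's chain leans on $u'$ being psh; your route is thus slightly more elementary and more general, at the price of one extra limiting parameter. One small correction: your remark that the lower bound ``already disposes of'' the degenerate cases $u(z)=-\infty$ or $u'(z)=-\infty$ is backwards---in those cases what needs proof is that the right-hand side equals $-\infty$, which is the \emph{upper} bound; but since your upper-bound argument is valid without any finiteness assumption (usc functions valued in $[-\infty,+\infty)$), the proof is complete as written.
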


\begin{proof}
Indeed, we have, if $2/n <\eta$: 
 \begin{eqnarray*}
u(z)+u'(z)&\le& \sup \{ u'(z)+ u_{1/j}(z) | j\ge n \}
\\
&\le &
\sup\{ u'(x)+u_{1/j}(x) | j\ge n, \  |x-z|\le 1/n \} \\  
&\le &\sup\{u'(x)+ u(x)| \quad |x-z| \le 2/n \}.
 \end{eqnarray*}
Since $u+u'$ is upper semicontinuous, we have: 
$$ u (z) + u'(z)=(u+u')^*(z)= \lim_{n\to \infty} \sup\{ u+u'(x)| \quad |x-z| \le 2/n \}.$$
\end{proof}

\begin{lem} \label{lemcle}
Let $\f$ a bounded psh function on $B$ and $v$ 
a continuous non negative volume form 
such that $e^{-\f}(dd^c\f)^n \ge v$ in the viscosity sense. 

Let $\psi$ be a bounded psh function and $w$ 
a continuous positive volume form, both defined on $B'$
 such that $(dd^c\psi)^n \ge w$. 

Then
$\exists C,c >0$ depending only on 
$\| \psi \| _{L^{\infty}}, \| \f \|_{L^{\infty}}$ 
such that for every $\epsilon \in [0,1]$ 
$\Phi=\f+\epsilon \psi$ satisfies: 

$$ e^{-\Phi} (dd^c\Phi)^n \ge (1-\epsilon)^n e^{-C\epsilon}  v + c \epsilon^n w 
$$
in the viscosity sense in $ B$. 
\end{lem}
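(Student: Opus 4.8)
The plan is to prove that $\Phi$ is a viscosity subsolution of $e^{-\Phi}(dd^c\Phi)^n\ge (1-\epsilon)^n e^{-C\epsilon}v+c\epsilon^n w$ by approximating $\f$ (and only $\f$) by its convolution regularizations, checking the inequality for the resulting approximants, and then recovering $\Phi$ as a relaxed upper limit via Lemma \ref{mol}. Write $v=f\beta_n$ and $w=g\beta_n$ with $f\ge 0$ and $g>0$ continuous, and set $C:=\|\psi\|_\infty$, $c:=e^{-\|\f\|_\infty-\|\psi\|_\infty}$, so that $C$ and $c$ depend only on the two sup-norms. We may assume $\epsilon>0$, since for $\epsilon=0$ the desired inequality is exactly the hypothesis on $\f$.

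First I would pass the hypothesis on $\f$ to pluripotential form: by Proposition \ref{pro:visc=pluripot+tw}, $e^{-\f}(dd^c\f)^n\ge v$ in the viscosity sense is equivalent to $(dd^c\f)^n_{BT}\ge e^{\f}v$. Lemma \ref{convol} then shows that the mollifications $\f_{1/j}:=\f\star\chi_{1/j}$ are smooth plurisubharmonic functions decreasing to $\f$, with $\|\f_{1/j}\|_\infty\le\|\f\|_\infty$, satisfying pointwise $(dd^c\f_{1/j})^n\ge e^{\f_{1/j}}f_{1/j}\beta_n$, where $f_{1/j}\to f$ locally uniformly.

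The core step is to show that each $\tilde\Phi_j:=\f_{1/j}+\epsilon\psi$ is a viscosity subsolution of $e^{-\Phi}(dd^c\Phi)^n\ge (1-\epsilon)^n e^{-C\epsilon}f_{1/j}\beta_n+c\epsilon^n w$. Let $q\in\mathcal C^2$ touch $\tilde\Phi_j$ from above at $x_0$, with $q(x_0)=\tilde\Phi_j(x_0)$. Since $\f_{1/j}$ is smooth, $\epsilon^{-1}(q-\f_{1/j})$ is a legitimate $\mathcal C^2$ test function touching $\psi$ from above at $x_0$, so the inequality $(dd^c\psi)^n\ge w$ yields $dd^c(q-\f_{1/j})(x_0)\ge 0$ and $\det\big(dd^c(q-\f_{1/j})(x_0)\big)\ge\epsilon^n g(x_0)$. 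Decomposing the non-negative Hermitian matrix $dd^cq(x_0)=(1-\epsilon)\,dd^c\f_{1/j}(x_0)+dd^c\big(q-(1-\epsilon)\f_{1/j}\big)(x_0)$ and invoking the concavity and homogeneity of $A\mapsto(\det A)^{1/n}$, together with monotonicity of the determinant and $dd^c(q-(1-\epsilon)\f_{1/j})(x_0)\ge dd^c(q-\f_{1/j})(x_0)\ge 0$, I obtain
$$\det\big(dd^cq(x_0)\big)^{1/n}\ge (1-\epsilon)\big(e^{\f_{1/j}}f_{1/j}\big)^{1/n}(x_0)+\epsilon\,g^{1/n}(x_0).$$
Raising this to the $n$-th power via $(a+b)^n\ge a^n+b^n$, multiplying by $e^{-q(x_0)}=e^{-\f_{1/j}(x_0)-\epsilon\psi(x_0)}$, and using $e^{-\epsilon\psi(x_0)}\ge e^{-C\epsilon}$ and $e^{-\f_{1/j}(x_0)-\epsilon\psi(x_0)}\ge c$ gives exactly $e^{-q(x_0)}\det(dd^cq(x_0))\ge (1-\epsilon)^n e^{-C\epsilon}f_{1/j}(x_0)+c\epsilon^n g(x_0)$, the required subsolution property.

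Finally I would let $j\to\infty$. The $\tilde\Phi_j$ decrease to $\Phi$, and Lemma \ref{mol}, applied with $u=\f$ and $u'=\epsilon\psi$, identifies $\Phi$ with the relaxed upper limit $z\mapsto\lim_n\sup\{\tilde\Phi_j(x):j\ge n,\ |x-z|\le 1/n\}$ of the family. Since $f_{1/j}$ converges locally uniformly to the continuous density $f$, the stability of viscosity subsolutions under relaxed upper limits (as in \cite{CIL}, Remark 6.3) shows that $\Phi$ is a viscosity subsolution of $e^{-\Phi}(dd^c\Phi)^n\ge (1-\epsilon)^n e^{-C\epsilon}v+c\epsilon^n w$, which is the claim. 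I expect this last passage to be the main obstacle: a sum of two viscosity subsolutions is in general not a subsolution, and the argument only succeeds because mollifying solely $\f$ converts the maximum condition into a genuine $\mathcal C^2$ test for $\psi$, while Lemma \ref{mol} certifies that the relaxed upper limit of the approximants equals $\Phi$ and nothing larger.
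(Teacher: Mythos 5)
Your proposal is correct, and it is a genuine ``dual'' of the paper's argument: the paper regularizes $\psi$, you regularize $\f$. The paper first treats the case $\psi\in\mathcal{C}^2$ by testing $\f$ with $q-\epsilon\psi$ and using superadditivity of the determinant on semipositive Hermitian matrices; in the general case it mollifies $\psi$ --- this is precisely why the statement places $\psi$ and $w$ on the larger ball $B'$, so that the smooth approximants $\psi_{1/k}$, which satisfy $(dd^c\psi_{1/k})^n\ge ((w^{1/n})_{1/k})^n$ pointwise by the Bellman-trick argument from the proof of Proposition \ref{pro:visc=pluripot}, are defined on all of $B$ --- and it concludes with the same combination of \cite{CIL} (Lemma 6.1 and Remark 6.3) and Lemma \ref{mol} that you invoke. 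Your swap has a real cost and a real gain. The cost: the inequality satisfied by $\f$ carries the zeroth-order weight $e^{\f}$, so mollifying $\f$ requires the heavier Lemma \ref{convol} (preceded by Proposition \ref{pro:visc=pluripot+tw} to pass to the pluripotential formulation), and since $\f$ lives only on $B$ your approximants $\f_{1/j}+\epsilon\psi$ live on the shrinking subdomains $\{x\in B:\ \mathrm{dist}(x,\partial B)>1/j\}$; in particular you apply Lemma \ref{mol} slightly outside its stated hypotheses (your $u=\f$ is in $PSH(B)$, not $PSH(B')$), which is harmless because its proof and the relaxed-limit stability argument are local, but this should be said. The gain: your argument never uses the larger ball $B'$ at all, so it proves the lemma assuming only that $\psi$ and $w$ are defined on $B$. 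Two points to tighten. First, $dd^c(q-\f_{1/j})(x_0)\ge 0$ does not follow directly from the single test inequality; one adds semipositive quadratic terms $H(x-x_0)$ to the test function $\epsilon^{-1}(q-\f_{1/j})$ and uses Lemma \ref{lem:semipositive}, as in the proofs of Propositions \ref{viscpsh} and \ref{pro:visc=pluripot}. Second, the stability theorem of \cite{CIL} requires a continuous operator, so --- exactly as in the paper's footnote --- it must be applied to the $(dd^c\cdot)_+^n$ form of the inequality and converted back at the end; this is legitimate here because your approximants have semipositive complex Hessians at test points and the limiting right-hand side dominates $c\epsilon^n w>0$, but as written your final step glosses over it.
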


\begin{proof} We may assume $\epsilon >0$ and $w$ to be smooth.
Let us begin by the case when $\psi$ is of class $C^2$. 
Let $x_0\in B$ and $q\in C^2$ such that $q(x_0)=\Phi (x_0)$ 
and $\Phi-q$ has a local maximum at $x_0$. 
Then, $\f- (q-\epsilon \psi)$ has a local maximum at $x_0$.

  We deduce:
$$ dd^c (q-\epsilon \psi)_{x_0} \ge 0 $$
$$ e^{- q(x_0) + \epsilon \psi(x_0) } (dd^c (q-\epsilon \psi))_{x_0})^n \ge v_{x_0}.$$
Using the inequality $(dd^c q)^n_{x_0} \ge (dd^c (q-\epsilon \psi))_{x_0})^n + \epsilon^n (dd^c\psi)^n$, we conclude. 

\smallskip

We now treat the general case.
Since $\psi$ is defined on $B'$ we can construct by the above classical 
mollification  a sequence of $C^2$ psh functions $(\psi_{1/k})$ converging to $\psi$
as $k$ goes to $+\infty$. 

We know from the proof of Proposition \ref{pro:visc=pluripot} 
that $(dd^c\psi_k)^n \ge ((w^{1/n} )_{1/k} )^n=w_k$ in both the pluripotential 
and viscosity sense.

We conclude from the previous case  that
 $\Phi_k= \f + \epsilon \psi_k$ satisfies
$$
c \epsilon^n w_k + (1-\epsilon)^n e^{-C\epsilon}  v \le e^{-\Phi_k} (dd^c \Phi_k)^n
$$
in the viscosity sense. 
Since $w_k>0$, there is no difference between subsolution
of $DMA$ and of $DMA_+$ hence, we have:
 $$ c \epsilon^n w_k + (1-\epsilon)^n e^{-C\epsilon} 
 v -e^{-\Phi_k} (dd^c \Phi_k)_+ ^n \le 0$$ in the viscosity sense. 

By Lemma 6.1 p. 34 and  remark 6.3 p. 35 in \cite{CIL}, we conclude that 
$$\bar\Phi=
\limsup_{n\to \infty} \sup\{ \Phi_j(x) | j\ge n, \  |x-z|\le 1/n \} $$
 satisfies the limit inequation
$$ e^{-\bar\Phi} (dd^c\bar\Phi)_+^n
 \ge (1-\epsilon)^n e^{-C\epsilon}  v + c \epsilon^n w 
$$ in the viscosity sense\footnote{Here we use the fact that 
$DMA_+$ is a continuous equation}.  
Now Lemma \ref{mol} implies that $\bar \Phi=\Phi$. 
Since $w>0$, the proof is complete. 
\end{proof}

\begin{theo}
Let $X$ be a compact K\"ahler manifold 
and $\omega \ge 0$ be a semi-k\"ahler smooth form. 

Then, the global viscosity comparison principle holds 
for $(DMA^1_v)$ for any non negative continuous probability measure $v$. 
\end{theo}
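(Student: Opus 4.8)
The plan is to deduce the degenerate comparison principle ($v\ge 0$) from the strictly positive one already proved in Theorem \ref{qcp}, by perturbing the subsolution into a subsolution of an equation with \emph{strictly positive} right-hand side while keeping the reference form equal to $\omega$, so that the supersolution survives after a mere constant shift. I would first discard the case $\int_X\omega^n=0$: if $\underline u$ is a bounded subsolution of $(DMA^1_v)$ then, by the lemma identifying bounded viscosity and pluripotential subsolutions, $(\omega+dd^c\underline u)^n_{BT}\ge e^{\underline u}v$, so $\int_X\omega^n=\int_X(\omega+dd^c\underline u)^n_{BT}\ge e^{\inf_X\underline u}\,v(X)>0$; hence for $\int_X\omega^n=0$ there is no bounded subsolution and the statement is vacuous. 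I therefore assume $\int_X\omega^n>0$.

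Since $\omega\ge 0$ and $\int_X\omega^n>0$, condition $(\dag)$ holds by Theorem \ref{thm:exp}: there is a bounded $\omega$-psh function $\psi_0$ with $(\omega+dd^c\psi_0)^n_{BT}\ge\eta\,\beta^n$ for some $\eta>0$, where $\beta$ is a fixed Kähler form. The key move is the convex perturbation $\underline u_t:=(1-t)\underline u+t\psi_0$, which is again a bounded $\omega$-psh function. Invoking the concavity of $A\mapsto(\det A)^{1/n}$ on nonnegative Hermitian matrices — equivalently the Brunn–Minkowski inequality for the Bedford–Taylor measures of bounded $\omega$-psh functions, the same concavity underlying Section \ref{sec:local} — together with $(\omega+dd^c\underline u)^n_{BT}\ge e^{\underline u}v$, I would obtain
$$
(\omega+dd^c\underline u_t)^n_{BT}\ \ge\ (1-t)^n\,e^{\underline u}v+t^n\eta\,\beta^n .
$$
As $\|\underline u_t-\underline u\|_\infty\le t\,\|\psi_0-\underline u\|_\infty\le Ct$ with all data bounded, this reads $(\omega+dd^c\underline u_t)^n_{BT}\ge e^{\underline u_t}v_t$ with $v_t:=(1-t)^n e^{-Ct}v+c\,t^n\beta^n>0$ continuous; thus $\underline u_t$ is a viscosity subsolution of $(DMA^1_{v_t})$ for the \emph{same} form $\omega$.

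On the supersolution side I would only translate by a constant. Setting $s_t:=-\log\!\big[(1-t)^n e^{-Ct}\big]>0$, the function $\overline u+s_t$ is a supersolution of $(DMA^1_{e^{-s_t}v})$; since $v_t\ge e^{-s_t}v$ and supersolutions of $(DMA^1_m)$ are supersolutions of $(DMA^1_{m'})$ whenever $m'\ge m$, it is a supersolution of $(DMA^1_{v_t})$. Now $\underline u_t$ and $\overline u+s_t$ are a bounded sub- and supersolution of $(DMA^1_{v_t})$ with $v_t>0$ on the compact manifold $X$, so Theorem \ref{qcp} applies and yields $\underline u_t\le\overline u+s_t$, i.e. $(1-t)\underline u+t\psi_0\le\overline u+s_t$. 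Letting $t\to 0$, with $s_t\to 0$ and $t\psi_0\to 0$ uniformly, gives $\underline u\le\overline u$.

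The main obstacle is exactly what dictates this route. The tempting shortcut — adding a small multiple of a \emph{local} Kähler potential, as in Lemma \ref{lemcle} — inflates the reference form from $\omega$ to $\omega+t\beta$; this improves the subsolution but wrecks the supersolution, since a supersolution may carry a $2$-jet with one large eigenvalue and vanishing determinant (think of $\lambda|z_1|^2$), for which $(\omega+t\beta+dd^c q)^n$ is large although $(\omega+dd^c q)^n_+=0$, so $\overline u$ fails to be a supersolution of the inflated equation. Using instead the \emph{global} function $\psi_0$ furnished by $(\dag)$ keeps the form equal to $\omega$, reducing the mismatch to the scalar twist $(1-t)^n e^{-Ct}<1$, which the constant shift $s_t$ absorbs. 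The only genuinely technical point is the Brunn–Minkowski inequality for the Monge–Ampère measures of the bounded potentials $\underline u$ and $\psi_0$, on which the positivity gain $t^n\eta\,\beta^n$ rests.
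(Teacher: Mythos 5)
Your proof is correct, and it shares the paper's skeleton --- perturb the subsolution into a subsolution of an equation with strictly positive density while keeping the reference form equal to $\omega$, shift the supersolution by a constant, apply Theorem \ref{qcp}, then let the parameters tend to zero --- but the two technical inputs are implemented in a genuinely different way. For the auxiliary potential, the paper uses the continuous solution $\psi$ of the Ricci-flat equation $(\omega+dd^c\psi)^n=w$ supplied by Theorem \ref{thm:flat}, whereas you only need the weaker datum $(\dag)$ supplied by Theorem \ref{thm:exp}; both rest on previously established existence results, so neither choice is circular, and yours asks for less. More substantially, the paper performs the convex mixing at the viscosity level through a dedicated lemma (Lemma \ref{lemcle}), proved via $C^2$ approximation, mollification (Lemma \ref{mol}) and the stability results of \cite{CIL}, precisely so that the argument never leaves viscosity theory (see the remark following the paper's proof, which offers this comparison principle as a substitute for pluripotential arguments). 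You instead do the mixing in Bedford--Taylor terms, where plain multilinearity and nonnegativity of the mixed products already give $(\omega+dd^c \underline{u}_t)^n_{BT}\ge (1-t)^n(\omega+dd^c\underline{u})^n_{BT}+t^n(\omega+dd^c\psi_0)^n_{BT}$ --- so the Brunn--Minkowski concavity you invoke is stronger than needed --- and you shuttle between the viscosity and pluripotential formulations of subsolution via Theorem \ref{thm:visc=pluripot} and Proposition \ref{pro:visc=pluripot+tw}. This is rigorous given Sections 1--2 and arguably simpler, at the price of re-injecting pluripotential machinery into a proof the paper deliberately keeps viscosity-theoretic. Two further points in your favour: your explanation of why the reference form must stay $\omega$ (a supersolution need not survive the inflation $\omega\mapsto\omega+t\beta$, since $(\cdot)_+^n$ can jump from $0$ to a large value) is exactly the design constraint behind Lemma \ref{lemcle}; and your mass-count disposal of the case $\int_X\omega^n=0$, using $\int_X(\omega+dd^c\underline{u})^n_{BT}=\int_X\omega^n\ge e^{\inf_X\underline{u}}\,v(X)>0$ for any bounded subsolution, correctly settles a degenerate case the paper leaves implicit in the word ``semi-K\"ahler''.
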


\begin{proof} This is a variant of
 the  argument sketched in \cite{IL} sect. V.3 p. 56. 

Let $\overline{u}$  be a  supersolution 
and  $\underline{u}$ be a 
subsolution. 
Perturb  the supersolution $\overline{u}$ 
setting  $\overline{u}_{\delta}=\overline{u}+\delta$. 
This  $\overline{u}_{\delta}$ is a supersolution 
to $(DMA^1_{\tilde w})$ for every continuous volume form $\tilde w$
such that $\tilde w\ge e^{-\delta}v$. 

Choose $w>0$ a continuous positive probability measure. 
Assuming w.l.o.g. that $\int_X\omega^n=1$, we can construct $\psi$ a continuous quasiplurisubharmonic functions 
such that, in the vicosity sense
$$
(\omega+dd^c\psi)^n=w.
$$

 Perturb  the 
subsolution $\underline{u}$ setting
 $$
 \underline{u}_{\epsilon}=(1-\epsilon)\underline{u} +\epsilon \psi.
 $$ 
By Lemma \ref{lemcle}, $\underline{u}_{\epsilon}$ satisfies, in the viscosity sense
$$
e^{-(1+\epsilon)u} (\omega+dd^c u)^n \ge 
\left(\frac{1-\epsilon}{1+\epsilon}\right)^ne^{-C\epsilon}v+ c\left(\frac{\epsilon}{1+\epsilon}\right)^nw 
$$
 This in turn implies that $\underline{u}_{\epsilon}$ satisfies, in the viscosity sense: 
$$
e^{-u} (\omega+dd^c u)^n \ge e^{-\epsilon \|u \|_{\infty}}  
\left[ \left(\frac{1-\epsilon}{1+\epsilon}\right)^ne^{-C\epsilon}v+ c\left(\frac{\epsilon}{1+\epsilon}\right)^nw \right].
$$

 Hence $\underline{u}_{\epsilon}$ satisifies, in the viscosity sense: 
$$
e^{-u} (\omega+dd^c u)^n \ge \tilde w$$ whenever 
$\tilde w \le e^{-\epsilon \|u \|_{\infty}}(
(\frac{1-\epsilon}{1+\epsilon})^ne^{-C\epsilon}v+ c(\frac{\epsilon}{1+\epsilon})^nw).
$

Choosing
 $1\gg \delta \gg \epsilon >0$, we find a continuous volume form 
$\tilde w >0$ such
that   $\overline{u}_{\delta}$
is a supersolution and $\underline{u}_{\epsilon}$ is a viscosity subsolution 
of $e^{-u}(\omega+dd^c u)^n=\tilde w$. Using the viscosity  
comparison principle for $\tilde w$, we conclude that
 $\overline{u}_{\delta}\ge \underline{u}_\epsilon$. Letting 
$\delta\to 0$, we infer $\overline{u}\ge \underline{u}$.
\end{proof}

This comparison principle has been inserted here for completeness. 
It could have been used instead of the pluripotential-theoretic arguments to establish existence 
of a viscosity solution in the case $v\ge 0$ of Theorem  \ref{thm:exp}. 
This could be useful in dealing with similar problems where pluripotential tools are
less efficient.

\subsection{Viscosity supersolutions of $(dd^c \f)^n=v$}

Assume $\Omega$ is an euclidean ball.
 Given $\f$ a bounded function, its plurisubharmonic projection
$$
P(\f)(x)= P_{\Omega}(\f)(x) := \left(\sup \{ \p(x) \, / \, \p \text{ psh on $\Omega$ and } \p \leq \f \}\right)^*,
$$
 is the greatest psh function that lies below $\f$ on $\Omega$. 
If $\f$ is upper semi-continuous on $\Omega$ there is no need of
 upper regularization and the upper enveloppe is $\leq \f$ on $\Omega$.

\begin{lem} \label{pro:super} 
\text{ }

 1) Let $\psi$ be a bounded plurisubharmonic function satisfying $(dd^c \psi)_{BT}^n \leq v$ 
 on $\Omega$. Then its lower semi-continuous regularization $\psi_*$ 
is a viscosity supersolution of the equation  $(dd^c  
 \f)^n=v$ on $\Omega$. 
 
 2) Let $\f$ be a continuous viscosity supersolution 
of the equation  $(dd^c \f)^n=v$ on $\Omega$. 
Then $ \psi := P (\f)$ is  a continuous 
 plurisubharmonic viscosity supersolution of the equation $(dd^c \psi)^n = v$ on $\Omega$. 
 
 3) Let $\f$ be a $C^2$-smooth viscosity supersolution of $(dd^c \f)^n=v$ in $\Omega$.
 Its plurisubharmonic projection
$P(\f)$ satisfies $(dd^c P(\f))^n_{BT}\le v$. 

 \end{lem}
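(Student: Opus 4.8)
The three assertions reinforce one another, so I would prove them in order, reusing each in the next. For part (1) I would transpose the comparison-principle argument from the first half of the proof of Proposition \ref{pro:visc=pluripot} to the supersolution side. Let $q\in{\mathcal C}^2$ touch $\psi_*$ from below at $x_0$, i.e. $\psi_*-q$ has a local minimum at $x_0$ with $\psi_*(x_0)=q(x_0)$. If $dd^cq_{x_0}\not\ge 0$ the required inequality $(dd^cq)^n_{+,x_0}\le v_{x_0}$ is automatic. Otherwise I argue by contradiction, assuming $(dd^cq)^n_{x_0}>v_{x_0}$; then $dd^cq_{x_0}$, being nonnegative with positive determinant, is positive definite, and for $\e>0$ small the function $q^{\e}:=q-\e\|x-x_0\|^2$ is still strictly psh with $(dd^cq^{\e})^n>v$ on a small ball $B'=B(x_0,r)$. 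Since $\psi\ge\psi_*\ge q$ near $x_0$, the function $\bar q^{\e}:=q^{\e}+\e r^2$ satisfies $\bar q^{\e}=q\le\psi$ on $\partial B'$ while $(dd^c\psi)^n_{BT}\le v<(dd^c\bar q^{\e})^n$ on $B'$. The comparison principle (Lemma \ref{cpd}) then forces $\psi\ge\bar q^{\e}$ on $B'$, that is
$$
\psi(x)\ge q(x)+\e\bigl(r^2-\|x-x_0\|^2\bigr),\qquad x\in B'.
$$
Taking $\liminf_{x\to x_0}$ and using $\psi_*(x_0)=\liminf_{x\to x_0}\psi(x)$ yields $\psi_*(x_0)\ge q(x_0)+\e r^2>\psi_*(x_0)$, the desired contradiction.

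For part (2), recall that $P(\f)$ is $\omega$-free psh with $P(\f)\le\f$, and that it is \emph{continuous} because $\f$ is (continuity of the envelope of a continuous obstacle on a ball, by Bedford--Taylor/Walsh). I would check the supersolution property pointwise by distinguishing two cases. At a contact point, where $P(\f)(x_0)=\f(x_0)$, any $q$ touching $\psi=P(\f)$ from below also touches $\f$ from below, since $q\le P(\f)\le\f$ with equality at $x_0$; the supersolution property of $\f$ gives $(dd^cq)^n_{x_0}\le v_{x_0}$ directly. Off the contact set, on the open set $\{P(\f)<\f\}$, the envelope is maximal, so $(dd^cP(\f))^n_{BT}=0\le v$ there, and part (1) applied to the continuous (hence $=\psi_*$) function $P(\f)$ shows it is a viscosity supersolution on this open set. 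In both cases the test inequality holds, so $P(\f)$ is a continuous psh viscosity supersolution.

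For the final assertion (3), I would first note that $(dd^cP(\f))^n_{BT}$ is carried by the contact set $K=\{P(\f)=\f\}$, again because $P(\f)$ is maximal, hence has vanishing Monge--Amp\`ere measure, on $\{P(\f)<\f\}$. Next I would observe that $\f$ is genuinely nonnegatively curved on $K$: for $x_0\in K$, restricting $\f-P(\f)\ge 0$ to complex lines through $x_0$ (where it attains its minimum) and using that $P(\f)$ is psh forces $dd^c\f_{x_0}\ge 0$, whence the supersolution property of the smooth $\f$ gives $(dd^c\f)^n_{x_0}\le v_{x_0}$ \emph{at every point of} $K$. It then remains to dominate $(dd^cP(\f))^n_{BT}$ by $(dd^c\f)^n$ on $K$, for which I would invoke the regularity of the projection: for a ${\mathcal C}^{1,1}$ (a fortiori ${\mathcal C}^2$) obstacle, $P(\f)$ is ${\mathcal C}^{1,1}$ and one has the orthogonality relation $(dd^cP(\f))^n_{BT}=\mathbf 1_K\,(dd^c\f)^n$ (Berman's plurisubharmonic projection). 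Combining the three facts gives
$$
(dd^cP(\f))^n_{BT}=\mathbf 1_K\,(dd^c\f)^n\le \mathbf 1_K\,v\le v.
$$

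The soft parts are (1) and (2), which are pure viscosity/comparison manipulations. The \textbf{main obstacle is the domination step in (3)}, namely controlling $(dd^cP(\f))^n_{BT}$ on the contact set by $(dd^c\f)^n$. A priori $P(\f)$ is only continuous and need not be twice differentiable, so the naive pointwise Hessian comparison $0\le dd^cP(\f)\le dd^c\f$ on $K$ is not immediately available. The honest route is to secure ${\mathcal C}^{1,1}$-regularity of the envelope of a smooth obstacle (equivalently, the Berman orthogonality formula): once $P(\f)$ is ${\mathcal C}^{1,1}$, its Monge--Amp\`ere measure is the absolutely continuous $\det(P(\f)_{i\bar j})\,\beta_n$, and at almost every point of $K$ the function $\f-P(\f)$ has a minimum, giving $0\le dd^cP(\f)\le dd^c\f$ and hence $\det(dd^cP(\f))\le\det(dd^c\f)\le v$. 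Identifying this regularity theory as the crux, and citing it rather than reproving it, is where I would concentrate the argument.
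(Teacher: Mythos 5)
Your proposal is correct and follows the paper's own proof in all three parts: part (1) is the same comparison-principle contradiction (the paper's perturbation is $q^{\e}=q-2\e(\|x-x_0\|^2-r^2)-\e r^2$), part (2) is the same dichotomy between contact points (where a lower test function for $P(\f)$ is also one for $\f$) and the open set $\{P(\f)<\f\}$ (where $(dd^c P(\f))^n_{BT}=0$ -- the paper re-runs the comparison argument there, while you invoke part (1), which amounts to the same thing), and part (3) cites exactly the classical facts the paper cites ($C^{1,1}$ regularity of the projection and the orthogonality relation $(dd^c P(\f))^n_{BT}=\mathbf{1}_{\{P(\f)=\f\}}(dd^c\f)^n$), with your complex-line restriction argument merely making explicit the step the paper leaves implicit, namely that $dd^c\f\ge 0$, hence $(dd^c\f)^n=(dd^c\f)^n_+\le v$, on the contact set. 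The only quibble is in part (1): your $\bar q^{\e}$ coincides with $q$ exactly on $\partial B'$, so the hypothesis of Lemma \ref{cpd} ($\psi\ge\bar q^{\e}$ \emph{near} $\partial B'$) is not literally satisfied; either build in a strict margin as the paper does (its test function is $\le q-\e r^2$ near the boundary) or invoke the $\liminf$-at-the-boundary form of the Bedford--Taylor comparison principle -- a one-line repair that does not affect the final contradiction.
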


\begin{proof}
1. We use the same idea as in the proof of Proposition 1.5. Assume $\psi \in PSH\cap L^{\infty} (\Omega)$ 
satisfies $(dd^c\psi)^n_{BT} \le v$ in the pluripotential sense on $\Omega$. Consider $q$ a ${\mathcal C}^2$ function such that $\psi_*(x_0)=q(x_0)$ and
$\psi_* - q$ achieves a local minimum at $x_0$. We want to prove that
$(dd^cq (x_0))_+^n  \le v (x_0)$.   Assume that $(dd^c q (x_0))_+^n > v_{x_0}$. Then $dd^c q (x_0) \geq 0$ and $(dd^c q (x_0))^n > v_{x_0} > 0$ which implies that $dd^c q (x_0) > 0$.  
Let $q^{\e}:=q - 2 \e (\| x-x_0 \|^2- r^2) - \e r^2$.
 Since $v$ has continuous density, we can choose $\e > 0$ small enough and a small ball $B (x_0,r)$ containing $x_0$ of radius $r>0$ such that  $dd^c q^{\e} > 0$ in $B (x_0,r)$ and $(dd^c q^{\e})^n > v$ on the ball $B (x_0,r)$. Thus we have
$q^{\e} = q - \e r^2 <  \psi_* \leq \psi $ near $\partial B (x_0,r)$ 
while $(dd^c q^{\e})^n_{BT} \ge v \geq (dd^c\psi)^n_{BT}$  on $B (x_0,r)$. 
The comparison principle (Lemma \ref{cpd})
yields $q^{\e} \le \psi$ on $B (x_0,r)$ hence $q^\e (x_0) = \liminf_{x \to x_0} q^\e (x) \leq \liminf_{x \to x_0} \psi (x) = \psi_* (x_0)$ i.e. $q
 (x_0) + \e r^2 \leq \psi_* (x_0) = q (x_0),$ which is a contradiction.   
 
 \smallskip

2. Set $\psi : = P (\f)$, fix a point $x_0 \in \Omega$ and consider a super test function $q$ for $\psi$ at $x_0$ i.e. $q$ is a $C^2$ function on a small ball $B (x_0,r) \subset \Omega$ such that  $\psi (x_0) = q (x_0)$ and $\psi - q$ attains its  minimum  at $x_0$.
We want to prove that $(dd^c q (x_0))_+^n \leq v (x_0)$. Since $\psi \leq \f$, there are two cases:\\
- if $\psi (x_0) = \f (x_0)$ then $q$ is also a super test function for $\f$ at $x_0$ and then  $(dd^c q (x_0))^n_+ \leq v (x_0)$ since $\f$ is a supersolution of the same equation, \\
- if $\psi (x_0) < \f (x_0)$, by continuity of $\f$ there exists a ball $B (x_0,s)$ $0 < s < r$ such that $\psi = P (\f) < \f$ on the ball $B (x_0,s)$ and then
$ (dd^c \psi)^n = 0$ on $B (x_0,s)$  since $(dd^c P(\f))^n$ is supported on the set $\{P (\f) = \f\}$. Therefore $\psi$ is a continuous psh function satisfying the inequality  $(dd^c \psi)^n = 0 \leq v $ in the sense of pluripotential theory on the ball $B (x_0,s)$. 
Assume that $(dd^c q (x_0))_+^n > v (x_0)$. 
Then by definition, $dd^c q (x_0) > 0$ and $(dd^c q (x_0))^n > v (x_0)$. 
Taking $s > 0$ small enough and $\e > 0$ small enough we can assume that
 $q^\e := q - \e (\vert x - x_0\vert^2 - s^2) $ is psh on
 $B (x_0,s)$ and $(dd^c q^\e)^n > v \geq (dd^c \psi)^n$ on
 the ball $B (x_0,s)$ while $q^\e = q \leq \psi$ on $\partial B (x_0,s)$. By the pluripotential
 comparison principle for the complex Monge-Amp\`ere operator, 
it follows that $q^\e \leq \psi$ on $B (x_0,s)$, thus $q (x_0) + \e s^2 \leq \psi (x_0)$, 
which is a contradiction.

\smallskip

3. It is classical (see \cite{BT1}, \cite{Berm}, \cite{Dem1}) 
that under these hypotheses, $P(\f)$ is
a ${C}^{1,1}$-smooth function,
its Monge-Amp\`ere measure $(dd^c P(\f))_{BT}^n$ 
 is concentrated on the set where $P(\f)=\f$, and satisfies
$$
(dd^c P(\f) )_{BT}^n={\bf 1}_{\{P(\f)=\f \}} (dd^c \f )^n.
$$ 
The conclusion follows from the definition of viscosity supersolutions. 
\end{proof}

We do not know whether part 3 of the lemma is valid for less regular supersolutions.

\end{document}